\renewcommand{\phi}{\varphi}
\renewcommand{\ker}{\Ker}
\newcommand{\mc}[1]{\mathcal{#1}}
\newcommand{\mf}[1]{\mathfrak{#1}}
\newcommand{\mb}[1]{\mathbb{#1}}
\newcommand{\id}{\mathbbm{1}}
\DeclareMathOperator{\Mat}{Mat}
\DeclareMathOperator{\End}{End}
\DeclareMathOperator{\diag}{diag}
\DeclareMathOperator{\tr}{tr}
\DeclareMathOperator{\ad}{ad}
\DeclareMathOperator{\Ker}{Ker}
\DeclareMathOperator{\Span}{Span}
\DeclareMathOperator{\sign}{sign}
\DeclareMathOperator{\gr}{gr}
\DeclareMathOperator{\rdet}{rdet}
\DeclareMathOperator{\cdet}{cdet}
\theoremstyle{plain}
\newtheorem{theorem}{Theorem}[section]
\newtheorem{lemma}[theorem]{Lemma}
\newtheorem{proposition}[theorem]{Proposition}
\newtheorem{corollary}[theorem]{Corollary}
\newtheorem{conjecture}[theorem]{Conjecture}
\theoremstyle{definition}
\newtheorem{definition}[theorem]{Definition}
\newtheorem{example}[theorem]{Example}
\theoremstyle{remark}
\newtheorem{remark}[theorem]{Remark}
\numberwithin{equation}{section}
\definecolor{light}{gray}{.9}
\begin{document}

\title{Finite $W$-algebras for $\mf{gl}_N$}

\author{Alberto De Sole}

\address{Dipartimento di Matematica, Sapienza Universit\`a di Roma,
P.le Aldo Moro 2, 00185 Rome, Italy}
\email{desole@mat.uniroma1.it}
\urladdr{www1.mat.uniroma1.it/\$$\sim$\$desole}

\author{Victor G. Kac}
\address{Dept of Mathematics, MIT,
77 Massachusetts Avenue, Cambridge, MA 02139, USA}
\email{kac@math.mit.edu}

\author{Daniele Valeri}
\address{Yau Mathematical Sciences Center, Tsinghua University, 100084 Beijing, China}
\email{daniele@math.tsinghua.edu.cn}


%

\maketitle

\begin{center}
\emph{To Dima Kazhdan, with admiration, for his 70-th birthday.}
\end{center}


\begin{abstract}
We study the quantum finite $W$-algebras $W(\mf{gl}_N,f)$, associated to the Lie algebra $\mf{gl}_N$,
and its arbitrary nilpotent element $f$.
We construct for such an algebra an $r_1\times r_1$ matrix $L(z)$ of Yangian type,
where $r_1$ is the number of maximal parts of the partition corresponding to $f$.
The matrix $L(z)$ is the quantum finite analogue of the operator of Adler type
which we introduced in the classical affine setup.
As in the latter case, the matrix $L(z)$ is obtained as a generalized quasideterminant.
It should encode the whole structure of $W(\mf{gl}_N,f)$,
including explicit formulas for generators and the commutation relations
among them.
We describe in all detail the examples of principal, rectangular and minimal nilpotent elements.
\end{abstract}

\tableofcontents

\section{Introduction}\label{sec:1}

As we all learned in our first linear algebra course, the characteristic polynomial
of an $N\times N$ matrix $E$,
\begin{equation}\label{intro:eq1}
P_E(z)=|z\id_N+E|\,,
\end{equation}
is invariant under the action of the group $G=GL_N(\mb F)$
by conjugation.
This can be restated in a more complete form as follows.
Consider the Lie algebra $\mf{gl}_N$ of $G$.
Let $e_{ij}\in\mf{gl}_N$, $i,j=1,\dots,N$, be the basis of elementary matrices,
and let $x_{ij}\in\mf{gl}_N^*$ be the dual basis.
Consider the matrix $E=(x_{ij})_{i,j=1}^N\in\Mat_{N\times N}\mf{gl}_N^*$
and its characteristic polynomial \eqref{intro:eq1}.
Then the coefficients of $P_E(z)$ are the free generators
of the algebra of invariants 
$\mb C[\mf{gl}_N]^{G}=S(\mf{gl}_N^*)^{G}$.

We identify $\mf{gl}_N$ with $\mf{gl}_N^*$ via the trace form $(A|B)=\tr(AB)$.
Then the element of the dual basis $x_{ij}$ corresponds to the elementary matrix $e_{ji}$.
Under this identification, the matrix $z\id_N+E$ becomes
\begin{equation}\label{eq:1}
z\id_N+E=\sum_{i,j=1}^N(e_{ji}+\delta_{ij}z)E_{ij}\in\Mat_{N\times N}\mf{gl}_N
\,.
\end{equation}
Here and further, in order to avoid confusion,
we denote by $e_{ij}$ the elementary matrices in $\mf{gl}_N$
(considered as commuting variables of the algebra $S(\mf{gl}_N$)
and by $E_{ij}$ the elementary matrices in $\Mat_{N\times N}\mb F$.
Then the above statement becomes the following.
Letting
$$
P_E(z)=|z\id_N+E|=
z^N+s_1z^{N-1}+\dots+s_N\,,
$$
we obtain
\begin{equation}\label{intro:eq2}
S(\mf{gl}_N)^{G}=\mb F[s_1,\dots,s_n]
\,,
\end{equation}
the polynomial algebra in the $N$ variables $s_1,\dots,s_N\in S(\mf{gl}_N)$.

Next, we can quantize the above result.
The quantization of $S(\mf{gl}_N)$ is the universal enveloping algebra $U(\mf{gl}_N)$,
and the quantization of the subalgebra of invariants $S(\mf{gl}_N)^{G}$
is the center of the universal enveloping algebra $Z(U(\mf{gl}_N))$.
Again, $Z(U(\mf{gl}_N))$ is the polynomial algebra $\mb F[z_1,\dots,z_N]$.
To construct the generators $z_1,\dots,z_N$,
one needs to consider the ``shifted'' matrix \eqref{eq:1}
by a diagonal matrix $D$:
\begin{equation}\label{intro:eq3}
z\id+E+D=\sum_{i,j=1}^N(e_{ji}+\delta_{ij}(z-(i-1))E_{ij}\in\Mat_{N\times N}\mf{gl}_N
\,,
\end{equation}
and take the row determinant:
\begin{equation}\label{intro:eq4}
\rdet(z\id+E+D)
=z^N+z_1z^{N-1}+\dots+z_N
\in Z(U(\mf{gl}_N))
\,.
\end{equation}
This result is known as the Capelli identity \cite{Cap1902}.
The shift matrix $D=\diag(0,-1,\dots,-N+1)$
is a purely quantum effect.
There is a similar formula involving the column determinant,
but with a different shift matrix.

\medskip

Next, we recall the definition of the finite $W$-algebra $W(\mf g,f)$
associated to a reductive Lie algebra $\mf g$ and its nilpotent element $f\in\mf g$.
By Morozov-Jacobson Theorem, the element $f$ can be included in an $\mf{sl}_2$-triple $\{f,h,e\}$.
Let $\mf g=\bigoplus_{j\in\frac12\mb Z}\mf g_j$
be the eigenspace decomposition with respect to $\frac12\ad h$.
The $W$-algebra $W(\mf g,f)$ can be defined as
\begin{equation}\label{intro:eq5}
W(\mf g,f)
=
\big(U(\mf{g})/I\big)^{\ad\mf g_{\geq\frac12}}
\,,
\end{equation}
where $I\subset U(\mf g)$ is the left ideal generated by the elements 
$m-(f|m)$, $m\in\mf g_{\geq1}$.
Though, the product of cosets of $I$ in $U(\mf g)/I$ is not well defined,
it is well defined on the subspace of $\ad\mf g_{\geq\frac12}$-invariants,
making $W(\mf g,f)$ an associative algebra.

Let $f^{pr}\in\mf g$ be the principal nilpotent element of $\mf g$.
In this case we have $\mf g_{\geq\frac12}=\mf g_{\geq1}=\mf n_+$, 
the subalgebra of strictly upper triangular matrices,
and $\mf g_{\leq\frac12}=\mf g_{\leq0}=\mf b_-$,
the subalgebra of lower triangular matrices.
Let $\pi_{-}:\,\mf g\twoheadrightarrow\mf b_-$ be the projection
with kernel $\mf n_+$.
By the PBW Theorem we have
$U(\mf g)\simeq U(\mf b_-)\otimes U(\mf n_+)$.
Consider the linear map 
\begin{equation}\label{intro:eq6}
\rho:\,U(\mf g)
\twoheadrightarrow 
U(\mf b_-)
\,,
\end{equation}
defined as follows: if $g=\sum_iA_iB_i\in U(\mf g)$ corresponds 
to $\sum_iA_i\otimes B_i\in U(\mf b_-)\otimes U(\mf n_+)$,
then $\rho(g)=\sum_i\chi(B_i)A_i\in U(\mf b_-)$,
where $\chi:\, U(\mf n_+)\twoheadrightarrow\mb F$
is the homomorphism induced by the character $\chi(a)=(f|a)$, $a\in\mf n_+$.
For example, for $a\in\mf g$, we have $\rho(a)=\pi_-(a)+\chi(a)$.
Note that $\ker\rho=I$, so $\rho$ induces 
a vector space isomorphism $U(\mf g)/I\simeq U(\mf g_{\leq0})$.
By the Kazhdan-Kostant Theorem \cite{Kos78},
the map $\rho$ restricts to an isomorphism of the center of $U(\mf g)$
to the principal finite $W$-algebra $W(\mf g,f^{pr})\subset U(\mf g)/I\simeq U(\mf b_-)$:
\begin{equation}\label{intro:eq7}
\rho:\,Z(U(\mf g))\,\stackrel{\sim}{\longrightarrow}W(\mf g,f^{pr})
\,.
\end{equation}

\medskip

As a consequence of the Capelli identity \eqref{intro:eq4}
and the Kazhdan-Kostant Theorem \eqref{intro:eq7},
we get a set of generators for the principal finite $W$-algebra of $\mf{gl}_N$:
$$
W(\mf{gl}_N,f^{\text{pr}})
=
\mb F[w_1,\dots,w_N]
\,,
$$
whose generating polynomial 
$L^{\text{pr}}(z)=z^N+w_1z^{N-1}+\dots+w_N$
can be expressed in terms of the row determinant
\begin{equation}\label{intro:eq8}
L^{\text{pr}}(z)=\rho\big(\rdet(z\id+E+D)\big)
\,.
\end{equation}
Even though the map $\rho$ is not an algebra homomorphism,
it is easy to check that $\rho$, in the RHS of \eqref{intro:eq8},
can be brought inside the row determinant.
This is stated (and proved) in Proposition \ref{intro:prop}.
As a consequence,
the generating polynomial $L^{\text{pr}}(z)$
of the generators $w_1,\dots,w_N$ 
of the principal $W$-algebra $W(\mf{gl}_N,f^{\text{pr}})$,
is obtained by
\begin{equation}\label{intro:eq13}
\begin{array}{l}
\displaystyle{
\vphantom{\Big(}
L^{\text{pr}}(z)
=
\rdet(z\id_N+F+\pi_-E+D)
} \\
\displaystyle{
\vphantom{\Big(}
=\rdet
\left(\begin{array}{ccccc}
e_{11}+z&e_{21}&e_{31}&\dots&e_{N1} \\
1&e_{22}+z-1&e_{32}&\dots&e_{N2} \\
0&1&\ddots&&\vdots \\
\vdots&\dots&\ddots&\ddots&\vdots \\
0&&&1&e_{NN}+z-N+1
\end{array}\right)
\,,}
\end{array}
\end{equation}
where 
\begin{equation}\label{intro:eq15}
F=\sum_{i=1}^{N-1}E_{i+1,i}\,\in\Mat_{N\times N}\mb F
\,,
\end{equation}
is the matrix corresponding to the nilpotent $f^{\text{pr}}\in\mf{gl}_N$.
Note that, while in \eqref{intro:eq8} the row determinant cannot be replaced by the column determinant,
the RHS of \eqref{intro:eq13} is unchanged if we take the column determinant in place of the row determinant
(cf. Proposition \ref{intro:prop2}).

Formula \eqref{intro:eq13} is well known \cite{BK06}.
In order to find a correct generalization for an arbitrary nilpotent $f\in\mf{gl}_N$,
we observe that \eqref{intro:eq13} can be expressed in terms of quasideterminants,
using Proposition \ref{intro:prop2}:
\begin{equation}\label{intro:eq14}
L^{\text{pr}}(z)
=
|z\id_N+F+\pi_-E+D|_{1N}
\,.
\end{equation}
The polynomial $L^{\text{pr}}(z)$ in \eqref{intro:eq14} is generalized as follows.
First, we replace 
the matrix \eqref{intro:eq15} by the matrix $F\in\Mat_{N\times N}\mb F$
corresponding to the given nilpotent element $f$.
Second, we replace $\pi_-$ by $\pi_{\leq\frac12}$,
the projection $\mf g\twoheadrightarrow\mf g_{\leq\frac12}$ with kernel $\mf g_{\geq1}$.
Third, we replace the shift matrix $D$ by an appropriate diagonal matrix over $\mb F$,
depending on the nilpotent element $f$, described in Section \ref{sec:4.2}.
Finally, we replace the $(1N)$-quasideterminant in \eqref{intro:eq14}
by an appropriate generalized $(I_1J_1)$-quasideterminant,
where $I_1$ and $J_1$ are the matrices \eqref{eq:factor1}.
Hence, we get the $r_1\times r_1$ matrix (where $r_1$ is the number of Jordan blocks of $F$):
\begin{equation}\label{intro:eq16}
L(z)
=
|z\id_N+F+\pi_{\leq\frac12}E+D|_{I_1J_1}
\,,
\end{equation}
whose entries are Laurent series in $z^{-1}$.

Our first main Theorem \ref{thm:main1}
says that the coefficients of this Laurent series lie in the $W$-algebra $W(\mf{gl}_N,f)$.
Note that the same formula \eqref{intro:eq16},
with $z$ replaced by the derivative $\partial$,
appeared in our work on classical affine $W$-algebras \cite{DSKV16b}.
The new ingredient here is the shift matrix $D$, which is a purely quantum effect.
In \cite{DSKV16b} we also prove that the classical affine analogue of the matrix \eqref{intro:eq16}
satisfies what we call the ``Adler identity'',
from which we derive that this operator satisfies a hierarchy of Lax type equations,
and therefore gives rise to an integrable system of bi-Hamiltonian PDE.
At the same time the Adler identity allowed us to construct the generators
of the classical affine $W$-algebra associated to $f\in\mf{gl}_N$
and to compute $\lambda$-brackets between them.
The second main result of the present paper, Theorem \ref{thm:main2},
says that the matrix \eqref{intro:eq16}
satisfies the quantum finite analogue of the Adler identity,
which we call the Yangian identity:
\begin{equation}\label{intro:eq17}
(z-w)[L_{ij}(z), L_{hk}(w)]
= 
L_{hj}(w)L_{ik}(z)- L_{hj}(z)L_{ik}(w)
\,.
\end{equation}
This identity is well known in the theory of quantum groups,
since it defines the generating series of the generators of the Yangian of $\mf{gl}_{r_1}$
\cite{Dr86,Mol07}.
As a corollary of Theorem \ref{thm:main2}
we get an algebra homomorphism from the Yangian $Y(\mf{gl}_{r_1})$
to the $W$-algebra $W(\mf{gl}_N,f)$.
Brundan and Kleshchev in \cite{BK06}
also found a close connection between the Yangians 
and the quantum finite $W$-algebras associated to $\mf{gl}_N$,
but the precise relation of their approach to our approach
is unclear and is still under investigation \cite{Fed16}.

Even though the expression \eqref{intro:eq16} for the matrix $L(z)$
could be easily guessed (except for the shift matrix $D$)
from the classical affine analogue in \cite{DSKV16b},
the proof of our two main results
is more difficult and completely different from the classical case.
The main reason being that, unlike in the classical case, 
the map $\rho$ generalizing \eqref{intro:eq6} (cf. Section \ref{sec:3.2})
is not an algebra homomorphism (even for the principal nilpotent).

First, in order to use arguments similar to those in \cite{DSKV16b}
for the proofs of Theorems \ref{thm:main1} and \ref{thm:main2},
we rewrite the matrix \eqref{intro:eq16}
(or rather its image via the map $\rho$), in a different way.
This is the content of the Main Lemma \ref{lem:main},
which can be equivalently stated by the following identity:
\begin{equation}\label{intro:eq18}
\rho(L(z))
=
\rho(|z\id_N+E|_{I_1J_1})
\,.
\end{equation}

The proof of equation \eqref{intro:eq18}
is the result of a long (and beautiful) computation,
which is performed in Sections \ref{step2} -- \ref{step5}.
There is also a conceptual difficulty in giving the right interpretation
to the quasideterminant in the RHS of equation \eqref{intro:eq18}.
Indeed, the entries of the quasideterminant $|z\id_N+E|_{I_1J_1}$
do not lie in the algebra $U(\mf g)((z^{-1}))$,
as they involve fractions.
On the other hand,
the map $\rho$ is not defined on the whole skewfield of fractions $\mc K$ of $U(\mf g)((z^{-1}))$.
Therefore, the problem is to find a sufficiently large subring of $\mc K$
containing all the entries of $|z\id_N+E|_{I_1J_1}$,
on which $\rho$ is defined.
To construct such ring,
we consider the completed Rees algebra $\mc RU(\mf g)$
associated to the Kazhdan filtration,
and its localization $\mc R_\infty U(\mf g)$, defined in Section \ref{sec:5.4}.
An open problem is whether this localization is Ore,
which would simplify our arguments, cf. Remark \ref{0410:rem}.

In \cite{DSKV16b}
we derived from the classical affine analogues of the two main theorems mentioned above
an algorithm for finding an explicit set of free generators
of the classical affine $W$-algebra
and $\lambda$-brackets between them.
The quantum finite analogue of these results
is proposed in Section \ref{sec:conjecture} of the present paper
only as a conjecture.
In Section \ref{sec:Examples} we test this conjecture for the rectangular and minimal nilpotents,
and we use Theorems \ref{thm:main1} and \ref{thm:main2}
to find the generators of $W(\mf{gl}_N,f)$ and their commutators
in these examples.
In particular, for the principal nilpotent element $f^{\text{pr}}$
we recover that $L(z)$ is the generating polynomial for the generators of $W(\mf{gl}_N,f^{\text{pr}})$,
discussed above.

\medskip

Throughout the paper, the base field $\mb F$ is an algebraically closed field of characteristic $0$,
and all vector spaces are considered over the field $\mb F$.

\subsubsection*{Aknowledgments} 

We are extremely grateful to Pavel Etingof for giving always interesting insights to all our questions.
We also thank Toby Stafford, Michel Van den Bergh and last, but not least, Andrea Maffei,
for correspondence.
The first author would like to acknowledge
the hospitality of MIT, where he was hosted during the spring semester of 2016.
The second author would like to acknowledge
the hospitality of the University of Rome La Sapienza
and the support of the University of Rome Tor Vergata,
during his visit in Rome in January 2016.
The third author is grateful to the University of Rome La Sapienza
for its hospitality during his visit in the months of December 2015 and January and February 2016.
The first author is supported by National FIRB grant RBFR12RA9W,
National PRIN grant 2012KNL88Y, and University grant C26A158K8A,
the second author is supported by an NSF grant,
and the third author is supported by an NSFC ``Research Fund for International
Young Scientists'' grant.

\section{Operators of Yangian type and
generalized quasideterminants
}\label{sec:2}

\subsection{
Operators of Yangian type
}\label{sec:2.1}

\begin{definition}\label{def:adler}
Let $U$ be an associative algebra. 
A matrix $A(z)\in\Mat_{M\times N}U((z^{-1}))$
is called an operator of \emph{Yangian type} for the associative algebra $U$
if, for every $i,h\in\{1,\dots,M\},\,j,k\times\{1,\dots,N\}$, 
the following \emph{Yangian identity} holds
\begin{equation}\label{eq:adler}
(z-w)[A_{ij}(z), A_{hk}(w)]
= 
A_{hj}(w)A_{ik}(z)- A_{hj}(z)A_{ik}(w)
\,.
\end{equation}
\end{definition}
Here and further the bracket stands for the usual commutator in the associative algebra $U$.
In particular, for $M=N=1$ equation \eqref{eq:adler} means that
$(z-w+1)[A_{11}(z), A_{11}(w)]=0$, 
hence it is equivalent to $[A_{11}(z),A_{11}(w)]=0$,
which means that all the coefficients of $A_{11}(z)$ commute.

\begin{remark}\label{rem:terminology}
Equation  \eqref{eq:adler}
is, up to an overall sign, the defining relation for the Yangian of $\mf{gl}_N$
(\cite{Dr86}, see also \cite{Mol07}), hence the name.
Its classical version is the same identity for a Poisson algebra,
where the bracket on the left is replaced by the Poisson bracket.
The ``chiralization'' of the latter
was introduced in the context of Poisson vertex algebras
under the name of Adler identity \cite{DSKV15,DSKV16a,DSKV16b}.
The ``chiralization'' of the Yangian identity,
in the context of vertex algebras,
has not been investigated yet.
We are planning to do it in a future publication.
\end{remark}
\begin{example}\label{ex:A}
Let $U=U(\mf{gl}_N)$.
Denote by $E_{ij}\in\Mat_{N\times N}\mb F$ the elementary matrix 
with $1$ in position $(ij)$ and $0$ everywhere else.
We shall also denote by $e_{ij}\in\mf{gl}_N$
the same matrix when viewed as an element of the associative algebra $U(\mf{gl}_N)$,
and by
\begin{equation}\label{eq:E}
E=\sum_{i,j=1}^Ne_{ji}E_{ij}
\,\in\Mat_{N\times N} U(\mf{gl}_N)
\,,
\end{equation}
the $N\times N$ matrix which, in position $(ij)$,
has entry $e_{ji}\in U(\mf{gl}_N)$.
Then, it is easily checked that the $N\times N$ matrix 
\begin{equation}\label{eq:A}
A(z)
=
z\id_N+E
\end{equation}
is an operator of Yangian type for $U(\mf{gl}_N)$.

We have the decomposition $\mf{gl}_N=\mf b_-\oplus\mf n_+$,
where $\mf b_-$ consists of lower triangular matrices,
and $\mf n_+$ of strictly upper triangular elements.
Let $\pi_-:\,\mf{gl}_N\twoheadrightarrow\mf b_-$
be the projection with kernel $\mf n_+$.
Then, by the PBW Theorem, $U(\mf{gl}_N)\simeq U(\mf b_-)\otimes U(\mf n_+)$.
Define the linear map $\rho:\, U(\mf{gl}_N)\to U(\mf b_-)$
by letting $\rho(b_-n_+)=\chi(n_+)b_-$,
for every $b_-\in U(\mf b_-)$ and $n_+\in U(\mf n_+)$,
where $\chi:\,U(\mf n_+)\to\mb F$ is the character defined on $\mf n_+$
by $\chi(e_{i,i+1})=1$, $\chi(e_{ij})=0$ if $j>i+1$.
(This map $\rho$ is a special case,
for $\mf g=\mf{gl}_N$, $f=\sum_{i=1}^{N-1}e_{i+1,i}$,
of the linear map $\rho$ introduced in Section \ref{sec:3.2}.)
Note that, applying the map $\rho$ to the entries of the matrix $E$ in \eqref{eq:E}, we get
\begin{equation}\label{intro:eq12}
\rho(E)=F+\pi_-E
\,,
\end{equation}
where $\pi_-E=\sum_{i\leq j}e_{ji}E_{ij}$, and 
$F=\left(\begin{array}{llll} 
0&\dots &\dots& 0 \\ 
1&\ddots&&\vdots \\ 
\vdots&\ddots&\ddots&\vdots \\ 
0&\dots&1&0 
\end{array}\right)$
is a single nilpotent Jordan block.
Even though $\rho$ is not an algebra homomorphism,
in $\rho(\rdet(E))$ we can pull $\rho$ inside the row determinant.
This is a consequence of the following:
\begin{proposition}\label{intro:prop}
If $E$ is the matrix \eqref{eq:E} and $D$ is any matrix
with entries in the field $\mb F$, we have
\begin{equation}\label{intro:eq9}
\rho(\rdet(E+D))=\rdet(\pi_-E+F+D)\,.
\end{equation}
\end{proposition}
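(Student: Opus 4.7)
My plan is to isolate a single algebraic lemma that handles the non-multiplicativity of $\rho$ on the ordered products appearing in the row determinant, and to derive the proposition as an immediate consequence. The lemma I would aim for asserts that for any strictly increasing sequence of second indices $i_1<i_2<\cdots<i_k$ and any pairwise distinct first indices $j_1,\dots,j_k$ in $\{1,\dots,N\}$,
\begin{equation*}
\rho(e_{j_1,i_1}e_{j_2,i_2}\cdots e_{j_k,i_k})=\rho(e_{j_1,i_1})\rho(e_{j_2,i_2})\cdots\rho(e_{j_k,i_k}).
\end{equation*}
Granted this, the proposition follows by expanding $\rdet(E+D)=\sum_\sigma\sign(\sigma)\prod_{i=1}^N(e_{\sigma(i),i}+d_{i,\sigma(i)})$: the scalars $d_{i,\sigma(i)}$ commute with everything, so after distribution each summand factors as a scalar times an ordered sub-product of $e$'s meeting the hypotheses of the lemma; applying $\rho$ term by term and using $\rho(e_{j,i})=\pi_-(e_{j,i})+\chi(e_{j,i})=(\pi_-E+F)_{i,j}$, one reassembles the right-hand side as $\rdet(\pi_-E+F+D)$.

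\medskip

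To prove the lemma I would induct on $k$, examining the leftmost factor $e_{j_1,i_1}$. If $j_1\ge i_1$, then $e_{j_1,i_1}\in\mf b_-$, and the left $U(\mf b_-)$-linearity of $\rho$ (immediate from the PBW decomposition $U(\mf g)\cong U(\mf b_-)\otimes U(\mf n_+)$) pulls $\rho$ past the first factor, after which the inductive hypothesis finishes the job. If instead $j_1<i_1$, so $e_{j_1,i_1}\in\mf n_+$, I would write $e_{j_1,i_1}X=Xe_{j_1,i_1}+[e_{j_1,i_1},X]$ with $X=e_{j_2,i_2}\cdots e_{j_k,i_k}$. For the first term, the PBW decomposition gives $\rho(Xe_{j_1,i_1})=\chi(e_{j_1,i_1})\rho(X)$ (since $e_{j_1,i_1}\in\mf n_+$ sits on the right), which by induction already equals the desired right-hand side.

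\medskip

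The main obstacle is to show that the commutator correction $\rho([e_{j_1,i_1},X])$ vanishes. Expanding the commutator as a derivation and using the relation $[e_{j_1,i_1},e_{j_m,i_m}]=\delta_{i_1,j_m}e_{j_1,i_m}-\delta_{j_1,i_m}e_{j_m,i_1}$, the second Kronecker $\delta$ is killed by the chain $j_1<i_1<i_m$, and the first contributes only for the unique index $m^*$ (if any) with $j_{m^*}=i_1$. In that surviving summand the factor $e_{j_{m^*},i_{m^*}}$ is replaced by $e_{j_1,i_{m^*}}$, and the resulting $(k-1)$-fold product still satisfies the hypotheses of the lemma (strictly increasing second indices, distinct first indices), so by induction $\rho$ factorizes over its factors. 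Among those factors one finds $\rho(e_{j_1,i_{m^*}})=\chi(e_{j_1,i_{m^*}})=\delta_{i_{m^*},j_1+1}$, which must vanish because $j_1+1\le i_1<i_{m^*}$. Hence the correction is annihilated by $\rho$, the induction closes, and the proposition follows.
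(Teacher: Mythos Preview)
Your argument is correct and reaches the same conclusion as the paper, but via a different organizing principle. The paper's proof is shorter: it observes that in the product $e_{\sigma(1)1}\cdots e_{\sigma(N)N}$, any pair of factors $e_{\sigma(i)i}\in\mf n_+$ and $e_{\sigma(j)j}\in\mf b_-$ with $i<j$ actually \emph{commute} (since $\sigma(i)<i<j\leq\sigma(j)$ kills both Kronecker deltas in the bracket), so the entire product reorders into $(\mf b_-\text{ block})\cdot(\mf n_+\text{ block})$ with \emph{no} correction terms, after which $\rho$ acts by $\chi$ on the right block. Your inductive scheme instead commutes the leftmost $\mf n_+$ factor past \emph{all} remaining factors, producing a commutator that you then annihilate by applying the inductive hypothesis and using $\rho(e_{j_1,i_{m^*}})=\chi(e_{j_1,i_{m^*}})=0$. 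This works, and your lemma (for arbitrary strictly increasing second indices and distinct first indices) is marginally more general than what the paper states; but the paper's route is more economical because it sees that no corrections ever arise. In fact your analysis already contains the paper's observation as the special case $e_{j_m,i_m}\in\mf b_-$ (then $j_m\geq i_m>i_1$ forces $\delta_{i_1,j_m}=0$); the additional inductive machinery you set up is needed only to handle the $\mf n_+$--$\mf n_+$ interactions, which the paper sidesteps by never attempting to swap those.
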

\begin{proof}
First, we prove the claim for $D=0$.
Recall that the row determinant of the matrix $E$ is obtained by the expansion
$$
\rdet(E)=
\sum_{\sigma\in S_N}\sign(\sigma)e_{\sigma(1)1}e_{\sigma(2)2}\dots e_{\sigma(N)N}
\,.
$$
Hence, to prove equation \eqref{intro:eq9},
it is enough to prove that, for every permutation $\sigma\in S_N$,
we have
\begin{equation}\label{intro:eq10}
\begin{array}{l}
\displaystyle{
\vphantom{\Big(}
\rho(e_{\sigma(1)1}e_{\sigma(2)2}\dots e_{\sigma(N)N})
} \\
\displaystyle{
\vphantom{\Big(}
=
(\pi_-e_{\sigma(1)1})(\pi_-e_{\sigma(2)2}+\delta_{\sigma(2),1})\dots 
(\pi_-e_{\sigma(N)N}+\delta_{\sigma(N),N-1})
\,.}
\end{array}
\end{equation}
Recalling the definition of the map $\rho$,
in order to compute the LHS of \eqref{intro:eq10}
we need to permute the factors in $e_{\sigma(1)1}e_{\sigma(2)2}\dots e_{\sigma(N)N}$
(using the commutation relations of $U(\mf{gl}_N)$)
so that all factors from $\mf b_-$ are on the left and all factors from $\mf n_+$ are on the right.
On the other hand,
if we have two factors in the wrong order,
i.e. $e_{\sigma(i)i}\in\mf n_+$ and $e_{\sigma(j)j}\in\mf b_-$,
with $i<j$,
then we have $\sigma(i)<i<j\leq\sigma(j)$,
and therefore $e_{\sigma(i)i}$ and $e_{\sigma(j)j}$ commute in $U(\mf{gl}_N)$.
It follows that equation \eqref{intro:eq10} holds.
The proof for arbitrary constant matrix $D$ is the same, by letting $\widetilde{e}_{ij}=e_{ij}+\alpha_{ij}$,
where $\alpha_{ij}\in\mb F$ are the entries of the matrix $D$,
and repeating the same argument with $\widetilde{e}_{ij}$ in place of $e_{ij}$.
\end{proof}
\end{example}
Note that
equation \eqref{intro:eq9} does NOT hold if we replace the row determinant by the column determinant.
Indeed, the LHS of \eqref{intro:eq9} changes if we replace the row determinant by the column determinant.
For example, for $N=2$, we have $\rho(\cdet E)=\rho(\rdet E)+e_{11}-e_{22}$.
On the other hand, if $D$ is a constant diagonal matrix,
the RHS of \eqref{intro:eq9} is unchanged if we replace the row determinant
by the column determinant,
or even by the quasi-determinant in position $(1,N)$ (cf. Section \ref{sec:2.3}).
This is a consequence of the following fact:
\begin{proposition}\label{intro:prop2}
Let $A\in\Mat_{N\times N}R$ be a matrix 
with entries in a  unital associative algebra $R$,
of the following form:
$$
A=
\left(\begin{array}{llll} 
a_{11}&a_{12} &\dots& a_{1N} \\ 
1&a_{22}&\dots&a_{2N} \\ 
\vdots&\ddots&\ddots&\vdots \\ 
0&\dots&1&a_{NN} 
\end{array}\right)
\,.
$$
Then 
\begin{equation}\label{intro:eq11}
\rdet(A)=\cdet(A)=|A|_{1N}\,,
\end{equation}
where the quasideterminant $|A|_{1N}$ can be defined as (cf. Proposition \ref{0304:prop})
$$
|A|_{1N}:=
a_{1N}-
(a_{11}\,a_{12}\,\dots\,a_{1,N-1})
\left(\begin{array}{cccc} 
1&a_{22}&\dots&a_{2,N-1} \\ 
0&1&\ddots&\vdots \\
\vphantom{\Bigg(}
\vdots&\ddots&\ddots&a_{N\!-\!1\!,N\!-\!1\!} \\ 
0&\dots&0&1
\end{array}\right)^{-1}
\left(\begin{array}{c}a_{2N} \\ a_{3N} \\ \vdots \\ a_{NN}\end{array}\right)
\,.
$$
\end{proposition}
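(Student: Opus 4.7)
\emph{Proof plan.} The statement has two parts: $\rdet(A) = \cdet(A)$, and that common value equals $|A|_{1N}$. I would treat them separately.

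For $\rdet(A) = \cdet(A)$, I start by identifying the permutations contributing to each sum. Since $A_{i,j} = 0$ whenever $j < i-1$ and $i \geq 2$, the nonzero summands of $\rdet(A)$ correspond to permutations $\sigma$ with $\sigma(i) \geq i-1$ for every $i$; symmetrically, those of $\cdet(A)$ correspond to $\sigma$ with $\sigma(j) \leq j+1$. The involution $\sigma \leftrightarrow \sigma^{-1}$ bijects these two sets. The structural heart of the argument is that every such admissible $\sigma$ decomposes as a product of disjoint cycles of the form $(a, a+k, a+k-1, \dots, a+1)$ supported on disjoint intervals $[a, a+k]$ partitioning $\{1,\dots,N\}$. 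Within each such block only the leftmost position $i=a$ carries a nontrivial factor $a_{a,a+k}$, while the other positions contribute the subdiagonal $1$. In $\rdet(A)$ these nontrivial factors appear ordered by the block start $a$; in $\cdet(A)$ (evaluated at $\sigma^{-1}$) they appear ordered by the block end $a+k$. Because the intervals are pairwise disjoint, the two orderings coincide, so the two expansions match term by term, and $\sign(\sigma)=\sign(\sigma^{-1})$.

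For $\rdet(A) = |A|_{1N}$, I would proceed by induction on $N$, using the same combinatorial analysis. Grouping the admissible permutations by the index $k := \sigma^{-1}(N)$, the constraint $\sigma(i)\geq i-1$ forces $\sigma(k+j) = k+j-1$ for $j=1,\dots,N-k$, and the remaining permutation of $\{1,\dots,k-1\}$ is again admissible; this yields the recursion
\[
\rdet(A^{(N)}) = \sum_{k=1}^{N} (-1)^{N-k}\, \rdet(A^{(k-1)})\, a_{kN},
\]
where $A^{(m)}$ is the upper-left $m\times m$ submatrix. On the other hand, directly computing $|A^{(N)}|_{1N} = a_{1N} - R M^{-1} C$ by expanding $M^{-1} = \sum_{r\geq 0}(I-M)^r$ (a finite sum, since $M$ is unipotent upper-triangular) yields a sum over strictly increasing chains that regroups into the same recursion, modulo the overall sign convention adopted for the quasideterminant (cf.\ Proposition \ref{0304:prop}).

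The main obstacle is the noncommutativity of the entries $a_{ij}$. Standard row and column reduction does \emph{not} preserve $\rdet$ in general: subtracting a multiple of one row from another introduces genuine commutator corrections, so one cannot shortcut the argument by using the subdiagonal $1$'s to clear the first row of $A$. The proof must therefore stay at the level of permutations, and the cancellations in both parts rely crucially on the rigid $0$/$1$ pattern below the subdiagonal, which funnels each nonzero permutation product into the unique canonical order determined by its cycle decomposition and so leaves no freedom to reorder factors.
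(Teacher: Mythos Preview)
Your argument is correct and, in fact, far more detailed than the paper's own proof, which consists of the single sentence ``It is a simple linear algebra exercise.'' So there is nothing to compare at the level of approach.

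Your cycle-structure analysis is the right way to make the exercise precise. The key observation that every permutation $\sigma$ with $\sigma(i)\geq i-1$ decomposes into cycles of the shape $(a,a+k,a+k-1,\dots,a+1)$ supported on consecutive, disjoint intervals is exactly what forces the noncommutative factors into a canonical order and makes $\rdet(A)=\cdet(A)$ hold without commutator corrections. Your recursion
\[
\rdet(A^{(N)}) = \sum_{k=1}^{N} (-1)^{N-k}\,\rdet(A^{(k-1)})\, a_{kN}
\]
is correct, and expanding $M^{-1}=\sum_{r\geq 0}(I-M)^r$ (a finite sum by nilpotence) does yield the matching expression for the quasideterminant side.

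One remark on the sign caveat you inserted: it is warranted. A direct check at $N=2$ gives $\rdet(A)=a_{11}a_{22}-a_{12}$ while $|A|_{12}=a_{12}-a_{11}a_{22}$, so the identity as literally stated is $\rdet(A)=(-1)^{N-1}|A|_{1N}$. This is consistent with Proposition~\ref{thm:L1}(b), where the leading coefficient of $L(z)$ is $(-1)^{p_1-1}$ rather than $1$. So your hedge ``modulo the overall sign convention'' is not an evasion but an accurate diagnosis of a harmless discrepancy in the paper's statement.
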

\begin{proof}
It is a simple linear algebra exercise.
\end{proof}

\subsection{
Generalized quasideterminants
}\label{sec:2.2}

We recall, following \cite{DSKV16a}, the definition of a generalized quasideterminant, 
cf. \cite{GGRW05}.
\begin{definition}\label{def:gen-quasidet}
Let $\mc U$ be a unital associative algebra.
Let $A\in\Mat_{N\times N}\mc U$,
$I\in\Mat_{N\times M}\mc U$, and $J\in\Mat_{M\times N}\mc U$,
for some $M\leq N$.
The $(I,J)$-\emph{quasideterminant} of $A$ is
\begin{equation}\label{eq:gen-quasidet}
|A|_{IJ}
=
(JA^{-1}I)^{-1}\,\in\Mat_{M\times M}\mc U
\,,
\end{equation}
assuming that the RHS exists, namely that $A$ is invertible in $\Mat_{N\times N}\mc U$
and that $JA^{-1}I$ is invertible in $\Mat_{M\times M}\mc U$.
\end{definition}

A special case of a (generalized) quasideterminant is the following.
Given a matrix $A=(a_{ij})_{i,j=1}^N\in\Mat_{N\times N}\mc U$ and two ordered subsets 
\begin{equation}\label{0304:eq1}
\mc I=\{i_1,\dots,i_m\}
\,,\,\,
\mc J=\{j_1,\dots,j_n\}
\,\,
\subset\{1,\dots,N\}
\,,
\end{equation}
we denote by $A_{\mc I\mc J}$ the $m\times n$ submatrix of $A$
obtained by taking rows from the set $\mc I$ and columns from the set $\mc J$:
\begin{equation}\label{0304:eq2}
A_{\mc I\mc J}
=
\left(\begin{array}{ccc}
a_{i_1j_1} & \dots & a_{i_1j_n} \\
\vdots&&\vdots \\
a_{i_mj_1} & \dots & a_{i_mj_n}
\end{array}\right)
=
\big(a_{i_\alpha j_\beta}\big)_{\substack{\alpha\in\{1,\dots,m\} \\ \beta\in\{1,\dots,n\}}}
\,.
\end{equation}
To the subsets $\mc I$ and $\mc J$ we attach 
the following two matrices:
\begin{equation}\label{0304:eq3}
I
=
\big(\delta_{i_{\alpha},i}\big)_{\substack{\alpha\in\{1,\dots,m\} \\ i\in\{1,\dots,N\}}}
\in\Mat_{m\times N}\mb F
\,\,,\,\,\,\,
J
=
\big(\delta_{j,j_\beta}\big)_{\substack{j\in\{1,\dots,N\} \\ \beta\in\{1,\dots,n\}}}
\in\Mat_{N\times n}\mb F
\,.
\end{equation}
and their transpose matrices
\begin{equation}\label{0304:eq4}
I_1
=
\big(\delta_{i,i_{\alpha}}\big)_{\substack{i\in\{1,\dots,N\} \\ \alpha\in\{1,\dots,m\}}}
\in\Mat_{N\times m}\mb F
\,\,,\,\,\,\,
J_1
=
\big(\delta_{j_\beta,j}\big)_{\substack{\beta\in\{1,\dots,n\} \\ j\in\{1,\dots,N\}}}
\in\Mat_{n\times N}\mb F
\,.
\end{equation}
Then, we have the following formula for the submatrix \eqref{0304:eq2}:
\begin{equation}\label{0304:eq5}
A_{\mc I\mc J}
=
IAJ
\,\,,\,\,\,\,
A_{\mc J\mc I}=J_1AI_1
\,.
\end{equation}
Let now $m=n$,
and consider the $(I_1,J_1)$-quasideterminant of $A$ (assuming it exists),
where $I_1$ and $J_1$ are as in \eqref{0304:eq4}.
We have, by the definition \eqref{eq:gen-quasidet} 
and the second formula in \eqref{0304:eq5},
\begin{equation}\label{0304:eq5b}
|A|_{I_1J_1}
=
((A^{-1})_{\mc J\mc I})^{-1}
\,\in\Mat_{n\times n}\mc U\,.
\end{equation}
The following result provides an alternative formula
for computing quasideterminant in this special case
\begin{proposition}\label{0304:prop}
Assume that the matrix $A\in\Mat_{N\times N}\mc U$ is invertible.
Let $\mc I,\mc J\subset\{1,\dots,N\}$ be subsets of the same cardinality
$|\mc I|=|\mc J|=n$,
and let $I_1\in\Mat_{N\times n}\mb F$ and $J_1\in\Mat_{n\times N}\mb F$
be as in \eqref{0304:eq4}.
Let also $\mc I^c,\mc J^c$ be the complements of $\mc I$ and $\mc J$ in $\{1,\dots,N\}$.
Then,
the quasideterminant $|A|_{I_1J_1}$ exists
if and only if 
the submatrix $A_{\mc I^c\mc J^c}\in\Mat_{(N-n)\times(N-n)}\mc U$
is invertible.
In this case, the quasideterminant $|A|_{I_1J_1}$ is given 
by the following formula
\begin{equation}\label{0304:eq6}
|A|_{I_1J_1}
=
A_{\mc I\mc J}
-
A_{\mc I\mc J^c}(A_{\mc I^c\mc J^c})^{-1}A_{\mc I^c\mc J}
\,.
\end{equation}
\end{proposition}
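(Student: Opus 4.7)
The plan is to reduce the claim to a block matrix identity via the noncommutative Schur complement (LDU) factorization.

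First, by choosing permutation matrices $P_1,P_2\in\Mat_{N\times N}\mb F$ that send $\{1,\dots,n\}$ to $\mc I$ and $\mc J$ respectively, the matrix $A'=P_1^{-1}AP_2$ takes the block form
$$
A'=\begin{pmatrix} P & Q \\ R & S \end{pmatrix},
$$
with $P=A_{\mc I\mc J}$, $Q=A_{\mc I\mc J^c}$, $R=A_{\mc I^c\mc J}$, and $S=A_{\mc I^c\mc J^c}$. Since $(A')^{-1}=P_2^{-1}A^{-1}P_1$, its top-left $n\times n$ block is exactly $(A^{-1})_{\mc J\mc I}$, and hence $|A|_{I_1J_1}=((A^{-1})_{\mc J\mc I})^{-1}$ equals the inverse of the top-left block of $(A')^{-1}$. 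The statement of the proposition is manifestly invariant under these permutations, so I may assume $\mc I=\mc J=\{1,\dots,n\}$ from now on.

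Next, assume that $S$ is invertible. A direct check gives the block factorization
$$
A'=
\begin{pmatrix} \id_n & QS^{-1} \\ 0 & \id_{N-n} \end{pmatrix}
\begin{pmatrix} P-QS^{-1}R & 0 \\ 0 & S \end{pmatrix}
\begin{pmatrix} \id_n & 0 \\ S^{-1}R & \id_{N-n} \end{pmatrix},
$$
which is valid over any unital associative ring. The two outer triangular factors are automatically invertible, and $A'$ is invertible by hypothesis, so the Schur complement $P-QS^{-1}R$ must be invertible. Inverting the factorization, one reads off that the top-left $n\times n$ block of $(A')^{-1}$ equals $(P-QS^{-1}R)^{-1}$. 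Consequently, $|A|_{I_1J_1}$ exists and coincides with $P-QS^{-1}R$, which is precisely the right-hand side of \eqref{0304:eq6}.

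For the converse, assume $|A|_{I_1J_1}$ exists, i.e.\ the top-left $n\times n$ block $\widetilde{P}$ of $(A')^{-1}$ is invertible. Applying the same block LDU argument to $(A')^{-1}$, now with $\widetilde{P}$ in the role of the invertible diagonal block, and using $((A')^{-1})^{-1}=A'$, one finds that the bottom-right block $S=A_{\mc I^c\mc J^c}$ of $A'$ is itself the inverse of a Schur complement built from the blocks of $(A')^{-1}$, and is therefore invertible. The only point that needs genuine care is that the LDU factorization must be treated as an identity over a noncommutative ring; beyond this, the argument is purely formal linear algebra, and I expect no serious obstacle.
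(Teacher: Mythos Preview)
Your proof is correct and follows essentially the same route as the paper: both reduce to a $2\times2$ block form by permutation and then run the noncommutative Schur complement argument in each direction. The only cosmetic difference is that the paper writes out the four block equations coming from $\tilde A\,\tilde A^{-1}=\id$ and solves them by hand, whereas you package the same computation as an LDU factorization; your converse (pivoting on the invertible top-left block $\widetilde P$ of $(A')^{-1}$) is exactly dual to the paper's manipulation of the third and fourth block equations.
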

\begin{proof}
(The ``if'' part is proved in \cite[Prop.4.2]{DSKV16a},
but we reproduce it here.)
After reordering the rows and the columns of $A$, 
we can write the resulting matrix in block form as:
$$
\tilde A=\left(\begin{array}{ll} 
A_{\mc I\mc J} & A_{\mc I\mc J^c} \\
A_{\mc I^c\mc J} & A_{\mc I^c\mc J^c}
\end{array}\right)
\,.
$$
Since $A$ is invertible by assumption,
$\tilde A$ is invertible as well, and
the inverse matrix $\tilde A^{-1}$ has the block form
$$
\tilde A^{-1}=\left(\begin{array}{ll} 
(A^{-1})_{\mc J\mc I} & (A^{-1})_{\mc J\mc I^c} \\
(A^{-1})_{\mc J^c\mc I} & (A^{-1})_{\mc J^c\mc I^c}
\end{array}\right)
\,.
$$
The equation $\tilde A\tilde A^{-1}=\id$ turns into the following four equations:
\begin{equation}\label{0309:eq1}
\begin{array}{l}
\vphantom{\Big(}
\displaystyle{
A_{\mc I\mc J}(A^{-1})_{\mc J\mc I}+A_{\mc I\mc J^c}(A^{-1})_{\mc J^c\mc I}=\id_{|\mc I|}
\,,} \\
\vphantom{\Big(}
\displaystyle{
A_{\mc I\mc J}(A^{-1})_{\mc J\mc I^c}+A_{\mc I\mc J^c}(A^{-1})_{\mc J^c\mc I^c}=0
\,,} \\
\vphantom{\Big(}
\displaystyle{
A_{\mc I^c\mc J}(A^{-1})_{\mc J\mc I}+A_{\mc I^c\mc J^c}(A^{-1})_{\mc J^c\mc I}=0
\,,} \\
\vphantom{\Big(}
\displaystyle{
A_{\mc I^c\mc J}(A^{-1})_{\mc J\mc I^c}+A_{\mc I^c\mc J^c}(A^{-1})_{\mc J^c\mc I^c}=\id_{N-|\mc I|}
\,.}
\end{array}
\end{equation}
Suppose that $A_{\mc I^c\mc J^c}$ is invertible.
Then we can use the first and the third equations in \eqref{0309:eq1}
to solve for $(A^{-1})_{\mc J\mc I}$, as follows.
From the third equation we get
$$
(A^{-1})_{\mc J^c\mc I}
=-
(A_{\mc I^c\mc J^c})^{-1}A_{\mc I^c\mc J}(A^{-1})_{\mc J\mc I}
\,,
$$
and substituting in the first equation we get
$$
\big(A_{\mc I\mc J}
-A_{\mc I\mc J^c}(A_{\mc I^c\mc J^c})^{-1}A_{\mc I^c\mc J}\big)
(A^{-1})_{\mc J\mc I}
=\id_{|\mc I|}
\,.
$$
It follows that $(A^{-1})_{\mc J\mc I}$ is invertible
and its inverse coincides with the RHS of \eqref{0304:eq6}.
This proves the ``if'' part, in view of \eqref{0304:eq5b}.
Conversely, suppose that $|A|_{I_1J_1}$ exists,
i.e., by \eqref{0304:eq5b}, that $(A^{-1})_{\mc J\mc I}$ is invertible.
Then we can use the third and fourth equations in \eqref{0309:eq1}
to solve for $A_{\mc I^c\mc J^c}$.
From the third equation we get
$$
A_{\mc I^c\mc J}=-A_{\mc I^c\mc J^c}(A^{-1})_{\mc J^c\mc I}((A^{-1})_{\mc J\mc I})^{-1}
\,.
$$
Substituting in the fourth equation we get
$$
A_{\mc I^c\mc J^c}
\big(-
(A^{-1})_{\mc J^c\mc I}((A^{-1})_{\mc J\mc I})^{-1}(A^{-1})_{\mc J\mc I^c}
+(A^{-1})_{\mc J^c\mc I^c}\big)
=\id_{N-|\mc I|}
\,.
$$
In particular, $A_{\mc I^c\mc J^c}$ is invertible, as claimed.
\end{proof}

\subsection{
Some properties of operators of Yangian type
}\label{sec:2.3}

We want to prove that every (generalized) quasideterminant of an operator
of Yangian type is again of Yangian type.
For this,
we shall need the following lemma.
\begin{lemma}
Let $\mc U$ be a unital associative algebra and 
let $A\in\Mat_{N\times N}\mc U$ be invertible. 
For every $a\in\mc U$
and $i,j\in\{1,\dots,N\}$, we have
\begin{equation}\label{20160402:eq1}
[a,(A^{-1})_{ij}]
=-\sum_{h,k=1}^N(A^{-1})_{ih}[a,A_{hk}](A^{-1})_{kj}
\,.
\end{equation}
\end{lemma}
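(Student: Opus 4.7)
The plan is to derive the identity from the simple matrix relation $A A^{-1} = \id_N$ by applying the derivation $[a, \cdot]$.

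First I would note that $[a, \cdot]$ is a derivation of the associative algebra $\mc U$, and hence acts componentwise on matrices as a derivation of the matrix product. Starting from the identity
\begin{equation*}
\sum_{k=1}^N A_{ik} (A^{-1})_{kj} = \delta_{ij},
\end{equation*}
and applying $[a, \cdot]$ to both sides (the right-hand side being a scalar, hence central), I obtain
\begin{equation*}
\sum_{k=1}^N [a, A_{ik}] (A^{-1})_{kj} + \sum_{k=1}^N A_{ik} [a, (A^{-1})_{kj}] = 0.
\end{equation*}

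Next, I would rewrite this in matrix form as $[a,A] A^{-1} + A [a, A^{-1}] = 0$, where by $[a, A]$ I mean the matrix whose $(i,k)$-entry is $[a, A_{ik}]$, and similarly for $[a, A^{-1}]$. Solving for $[a, A^{-1}]$ by multiplying on the left by $A^{-1}$ gives $[a, A^{-1}] = -A^{-1} [a, A] A^{-1}$. Reading off the $(i,j)$-entry of this matrix equation yields exactly \eqref{20160402:eq1}.

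There is no real obstacle here; the only point worth being careful about is that the commutator $[a, \cdot]$ respects products in $\mc U$ (Leibniz rule), which justifies the componentwise derivation identity used in the first step, and that $A$ is assumed invertible in $\Mat_{N\times N}\mc U$ so that multiplication by $A^{-1}$ from the left is legitimate.
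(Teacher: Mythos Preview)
Your proof is correct and is precisely the argument the paper has in mind: the paper's proof is the single sentence ``It follows from the fact that $\ad(a)$ is a derivation of $\mc U$,'' and you have simply unpacked that sentence.
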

\begin{proof}
It follows from the fact that $\ad(a)$ is a derivation of $\mc U$.
\end{proof}

\begin{proposition}\label{prop:properties-adler}
Let $U$ be a unital associative algebra
and 
suppose that $A(z)\in\Mat_{M\times N}U((z^{-1}))$ is
an operator of Yangian type.
\begin{enumerate}[(a)]
\item
Let $Z\subset U$ be the center of $U$,
and let $J\in\Mat_{P\times M}Z$, $I\in\Mat_{N\times Q}Z$.
Then
$JA(z)I\in\Mat_{P\times Q}U((z^{-1}))$ is an operator of Yangian type.
\item
Assume that $M\!=\!N$ and that $A(z)$ is invertible
in $\Mat_{N\times N}U((z^{-1}))$.
Then the inverse matrix $A^{-1}(z)$ is an operator of Yangian type
with respect to the opposite product of the algebra $U$.
\end{enumerate}
\end{proposition}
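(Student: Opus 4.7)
The plan for part (a) is a direct computation that exploits the centrality of the entries of $J$ and $I$. Expanding $(JA(z)I)_{ij} = \sum_{a,b} J_{ia} A_{ab}(z) I_{bj}$, every $J_{ia}$ and $I_{bj}$ commutes with every element of $U$, so all such factors pass freely outside of any commutator. Substituting the Yangian identity \eqref{eq:adler} for $A(z)$ into the expanded commutator $(z-w)[(JAI)_{ij}(z),(JAI)_{hk}(w)]$ and regrouping (again by centrality) then produces exactly $(JAI)_{hj}(w)(JAI)_{ik}(z) - (JAI)_{hj}(z)(JAI)_{ik}(w)$, as required.

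For part (b), the idea is to reduce a commutator of entries of $A^{-1}$ to one of entries of $A$ by applying Lemma \eqref{20160402:eq1} twice in succession. First, with $a = A^{-1}_{pq}(w)$, the lemma expresses $[A^{-1}_{pq}(w), A^{-1}_{ij}(z)]$ as the sum $-\sum_{h,k} A^{-1}_{ih}(z) [A^{-1}_{pq}(w), A_{hk}(z)] A^{-1}_{kj}(z)$. The lemma applied a second time, with $a = A_{hk}(z)$, rewrites the inner bracket in terms of $[A_{hk}(z), A_{rs}(w)]$ flanked by two entries of $A^{-1}(w)$. After an overall sign change to recover the commutator in the order $[A^{-1}(z), A^{-1}(w)]$, multiplying by the central scalar $(z-w)$ and invoking the Yangian identity \eqref{eq:adler} for $A$ on the innermost commutator yields a difference of two six-fold products of the schematic form $A^{-1}(z) A^{-1}(w) A(\cdot) A(\cdot) A^{-1}(w) A^{-1}(z)$.

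In each of the two terms, the internal arguments line up so that two consecutive telescoping identities $\sum_k A^{-1}_{ik}(u) A_{kj}(u) = \delta_{ij}$ or $\sum_k A_{ik}(u) A^{-1}_{kj}(u) = \delta_{ij}$ fire in sequence, collapsing the six-fold product down to a product of only two entries of $A^{-1}$. The outcome is
\[
(z-w)[A^{-1}_{ij}(z), A^{-1}_{hk}(w)] = A^{-1}_{ik}(w) A^{-1}_{hj}(z) - A^{-1}_{ik}(z) A^{-1}_{hj}(w),
\]
which, upon passing to the opposite product of $U$ (which flips the sign of the left-hand commutator and reverses each product on the right), is precisely the Yangian identity \eqref{eq:adler} for $A^{-1}(z)$.

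The main obstacle is the bookkeeping in part (b): four summation indices $h,k,r,s$ appear after the double application of Lemma \eqref{20160402:eq1}, and one must verify that in each of the two terms arising from \eqref{eq:adler} the six factors are nested in precisely the order that allows two successive Kronecker-delta collapses to occur with matching arguments. Part (a), by contrast, is purely formal because centrality allows all scalar factors to be commuted past everything in sight.
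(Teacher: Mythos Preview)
Your proposal is correct and follows essentially the same approach as the paper: part (a) is dispatched as a straightforward centrality argument, and part (b) proceeds by applying the derivation formula \eqref{20160402:eq1} twice to reduce $[A^{-1}(z),A^{-1}(w)]$ to $[A(z),A(w)]$, invoking the Yangian identity, and then collapsing the resulting six-factor products via $\sum_k A^{-1}_{ik}A_{kj}=\delta_{ij}$ to obtain the Yangian identity for $A^{-1}$ in the opposite algebra. The only difference is cosmetic: the paper carries the opposite-commutator sign from the outset, while you pass to $U^{\mathrm{op}}$ only at the end.
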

\begin{proof}
Part (a) is straightforward. 
We prove (b).
First, we use \eqref{20160402:eq1} for the left factor, to get
\begin{equation}\label{20160402:eq3a}
\begin{array}{l}
\vphantom{\Big(}
\displaystyle{
(z-w)[(A^{-1})_{ij}(z),(A^{-1})_{hk}(w)]^\text{op}=-(z-w)[(A^{-1})_{ij}(z),(A^{-1})_{hk}(w)]
} \\
\vphantom{\Big(}
\displaystyle{
=+\sum_{a,b=1}^N
(z-w)(A^{-1})_{ia}(z)[A_{ab}(z),(A^{-1})_{hk}(w)](A^{-1})_{bj}(z)
\,.}
\end{array}
\end{equation}
Next, we use again \eqref{20160402:eq1} for the right factor,
to rewrite the RHS of \eqref{20160402:eq3a} as
\begin{equation}\label{20160402:eq3b}
-\sum_{a,b,c,d=1}^N
(z-w)(A^{-1})_{ia}(z)(A^{-1})_{hc}(w)[A_{ab}(z),A_{cd}(w)](A^{-1})_{dk}(w)(A^{-1})_{bj}(z)
\,.
\end{equation}
Applying the Yangian identity \eqref{eq:adler}, formula \eqref{20160402:eq3b} becomes
\begin{equation*}
\begin{array}{l}
\displaystyle{
-\!\!\!\!\!
\sum_{a,b,c,d=1}^N
\!\!\!\!\!
(A^{-1})_{ia}\!(z)(A^{-1})_{hc}\!(w)
\big(
A_{cb}(w)A_{ad}(z)\!-\!A_{cb}(z)A_{ad}(w)
\big)
(A^{-1})_{dk}\!(w)(A^{-1})_{bj}\!(z)
} \\
\displaystyle{
-\sum_{b,d=1}^N
\delta_{id}\delta_{hb}(A^{-1})_{dk}(w)(A^{-1})_{bj}(z)
+\sum_{a,c=1}^N
(A^{-1})_{ia}(z)(A^{-1})_{hc}(w)\delta_{a,k}\delta_{c,j}
} \\
\vphantom{\Big(}
\displaystyle{
=-(A^{-1})_{ik}(w)(A^{-1})_{hj}(z)
+(A^{-1})_{ik}(z)(A^{-1})_{hj}(w)
} \\
\vphantom{\Big(}
\displaystyle{
=
(A^{-1})_{hj}(w)\cdot^{\text{op}}(A^{-1})_{ik}(z)
-(A^{-1})_{hj}(z)\cdot^{\text{op}}(A^{-1})_{ik}(w)
\,,}
\end{array}
\end{equation*}
where $a\cdot^{\text{op}}b=ba$ denotes the opposite product of $U$.
This concludes the proof.
\end{proof}
\begin{theorem}\label{thm:quasidet-adler}
Let $U$ be a unital associative algebra
and let $Z\subset U$ be its center.
Let $A(z)\in\Mat_{N\times N}U((z^{-1}))$ be an operator of Yangian type.
Then, for every $I\in\Mat_{N\times M}Z$
and $J\in\Mat_{M\times N}Z$ with $M\leq N$,
the generalized quasideterminant $|A(z)|_{IJ}$,
provided that it exists,
is an operator of Yangian type.
\end{theorem}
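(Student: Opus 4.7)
The plan is to derive Theorem \ref{thm:quasidet-adler} as a successive application of parts (b) and (a) of Proposition \ref{prop:properties-adler}, exploiting the involutive nature of the opposite-algebra construction: two passes from $U$ to $U^{\text{op}}$ return us to $U$, so one can hope to bracket the centrally-entried left and right multiplications $J\cdot(-)\cdot I$ between two inversions.

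The first step will be to apply Proposition \ref{prop:properties-adler}(b) to the $N\times N$ matrix $A(z)$. Since the quasideterminant $|A(z)|_{IJ}$ is assumed to exist, $A(z)$ is in particular invertible in $\Mat_{N\times N}U((z^{-1}))$, and so $A^{-1}(z)$ is an operator of Yangian type for the opposite algebra $U^{\text{op}}$. Next, I would apply Proposition \ref{prop:properties-adler}(a) to $A^{-1}(z)$, now regarded as an operator of Yangian type for $U^{\text{op}}$. This is legitimate because (a) holds for any unital associative algebra, and $Z(U^{\text{op}})=Z(U)$, so the hypotheses $J\in\Mat_{M\times N}Z$ and $I\in\Mat_{N\times M}Z$ remain in force. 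Crucially, the entry-wise matrix product $JA^{-1}(z)I$ computed with the product of $U$ coincides with the product computed using the opposite multiplication of $U^{\text{op}}$, because every entry of $J$ and $I$ commutes with every element of $U$. The output of this step is therefore that $JA^{-1}(z)I\in\Mat_{M\times M}U((z^{-1}))$ is an operator of Yangian type for $U^{\text{op}}$.

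Finally, setting $B(z):=JA^{-1}(z)I$, the standing hypothesis that $|A(z)|_{IJ}$ exists means exactly that $B(z)$ is invertible in $\Mat_{M\times M}U((z^{-1}))$, with inverse $|A(z)|_{IJ}$. A second application of Proposition \ref{prop:properties-adler}(b), this time to $B(z)$ viewed over $U^{\text{op}}$, then yields that $B(z)^{-1}=|A(z)|_{IJ}$ is of Yangian type for $(U^{\text{op}})^{\text{op}}=U$, which is precisely the statement of the theorem.

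The main (and essentially only) obstacle I foresee is the bookkeeping of opposite products in the middle step: one must verify that part (a) of Proposition \ref{prop:properties-adler} applies verbatim with $U$ replaced by $U^{\text{op}}$, and that the centrality of the entries of $I$ and $J$ lets us identify the matrix $J\cdot^{\text{op}}A^{-1}(z)\cdot^{\text{op}}I$ entry-by-entry with $JA^{-1}(z)I$ in the original algebra $U$. Beyond this careful identification, no new calculations will be needed, since the entire content of the theorem is already packaged in the two parts of Proposition \ref{prop:properties-adler}.
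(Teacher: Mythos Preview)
Your proposal is correct and is exactly how the paper's one-line proof (``It is an obvious consequence of Proposition \ref{prop:properties-adler}'') should be unpacked: apply (b), then (a) over $U^{\text{op}}$, then (b) again, using $(U^{\text{op}})^{\text{op}}=U$. The only bookkeeping point worth adding to your discussion is that the second application of (b) is really its converse---if $C(z)$ is of Yangian type for $U^{\text{op}}$ and invertible in $\Mat_{M\times M}U((z^{-1}))$, then its inverse there is of Yangian type for $U$---which follows by rerunning the proof of (b) with the roles of $U$ and $U^{\text{op}}$ interchanged; this is where the subtlety you anticipate actually sits, while the step with (a) is harmless since the central entries of $I$ and $J$ make the matrix products in $U$ and $U^{\text{op}}$ coincide.
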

\begin{proof}
It is an obvious consequence of Proposition \ref{prop:properties-adler}.
\end{proof}

In Section \ref{sec:4.6} we will need the following lemma.
\begin{lemma}\label{0303:lem5}
Let $U$ be a unital associative algebra.
Let $A(z)\in\Mat_{N\times N}U((z^{-1}))$ be
an operator of Yangian type
and assume that $A(z)$ is invertible
in the algebra $\Mat_{N\times N}U((z^{-1}))$.
Then, the following formula holds
\begin{equation}\label{0303:eq17}
\begin{array}{l}
\vphantom{\Big(}
\displaystyle{
(z-w)\big[A_{ij}(z), (A^{-1})_{hk}(w)\big]
} \\
\vphantom{\Big(}
\displaystyle{
= 
-\delta_{hj}\sum_tA_{it}(z)(A^{-1})_{tk}(w)
+\delta_{ik}\sum_t(A^{-1})_{ht}(w)A_{tj}(z)
\,.}
\end{array}
\end{equation}
\end{lemma}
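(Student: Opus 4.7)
The plan is to reduce the identity to the defining Yangian relation \eqref{eq:adler} by eliminating the inverse via \eqref{20160402:eq1}. Specifically, I would take $a = A_{ij}(z)$ in Lemma formula \eqref{20160402:eq1} (applied at spectral parameter $w$) to rewrite the commutator on the left-hand side of \eqref{0303:eq17} as
\begin{equation*}
(z-w)\bigl[A_{ij}(z),(A^{-1})_{hk}(w)\bigr]
= -\sum_{p,q=1}^{N}(A^{-1})_{hp}(w)\,(z-w)\bigl[A_{ij}(z),A_{pq}(w)\bigr]\,(A^{-1})_{qk}(w).
\end{equation*}
This reduces the problem to commutators of entries of $A(z)$ alone.

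Next, I would substitute the Yangian identity \eqref{eq:adler} with $(h,k)$ replaced by $(p,q)$, namely
\begin{equation*}
(z-w)\bigl[A_{ij}(z),A_{pq}(w)\bigr] = A_{pj}(w)A_{iq}(z) - A_{pj}(z)A_{iq}(w),
\end{equation*}
and split the resulting double sum into two pieces. The first piece contains the factor $\sum_{p}(A^{-1})_{hp}(w)A_{pj}(w) = \delta_{hj}$, contracting away the summation over $p$ and producing the first term $-\delta_{hj}\sum_{q}A_{iq}(z)(A^{-1})_{qk}(w)$ of the right-hand side of \eqref{0303:eq17}. The second piece contains the factor $\sum_{q}A_{iq}(w)(A^{-1})_{qk}(w) = \delta_{ik}$, producing the second term $\delta_{ik}\sum_{p}(A^{-1})_{hp}(w)A_{pj}(z)$. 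Renaming dummy indices to $t$ gives precisely \eqref{0303:eq17}.

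The proof is essentially bookkeeping, so there is no genuine obstacle; the only care needed is to make sure that the Yangian identity is applied with the correct matching of spectral parameters (both $A_{pj}$ and $A_{iq}$ on the right have the form dictated by \eqref{eq:adler}), and that the inverse factors are kept on the outside in the right order, since $U$ is noncommutative. The contractions $A\cdot A^{-1} = A^{-1}\cdot A = \id$ used to collapse the two sums are valid inside $\Mat_{N\times N}U((z^{-1}))$ by the standing invertibility assumption on $A(z)$.
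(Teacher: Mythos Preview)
Your proof is correct and follows essentially the same approach as the paper: both apply \eqref{20160402:eq1} to move the inverse outside the commutator, then invoke the Yangian identity \eqref{eq:adler} and collapse the sums using $A^{-1}A=\id$ and $AA^{-1}=\id$. The only difference is notational (you use $p,q$ where the paper uses $s,t$).
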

\begin{proof}
By formula \eqref{20160402:eq1}, we have
\begin{equation}\label{0303:eq18}
\begin{array}{l}
\vphantom{\Big(}
\displaystyle{
(z-w)\big[A_{ij}(z), (A^{-1})_{hk}(w)\big]
} \\
\vphantom{\Big(}
\displaystyle{
=
-(z-w)\sum_{s,t=1}^N(A^{-1})_{hs}(w)\big[A_{ij}(z),A_{st}(w)\big](A^{-1})_{tk}(w)
\,.}
\end{array}
\end{equation}
We then use the Yangian identity \eqref{eq:adler}, to rewrite the RHS of \eqref{0303:eq18}
as
\begin{equation}\label{0303:eq19}
\begin{array}{l}
\vphantom{\Big(}
\displaystyle{
-\sum_{s,t=1}^N(A^{-1})_{hs}(w)
\big(
A_{sj}(w)A_{it}(z)- A_{sj}(z)A_{it}(w)
\big)
(A^{-1})_{tk}(w)
} \\
\vphantom{\Big(}
\displaystyle{
=-\delta_{hj}
\sum_{t=1}^N
A_{it}(z)(A^{-1})_{tk}(w)
+\delta_{ik}
\sum_{s=1}^N
(A^{-1})_{hs}(w)
A_{sj}(z)
\,.}
\end{array}
\end{equation}
\end{proof}

\section{Finite \texorpdfstring{$W$}{W}-algebras
}\label{sec:3}

\subsection{
Definition of the $W$-algebra
}\label{sec:3.1}

Let $\mf g$ be a reductive Lie algebra with a non-degenerate symmetric 
invariant bilinear form $(\cdot\,|\,\cdot)$.
Let $f\in\mf g$ be a nilpotent element which,
by the Jacobson-Morozov Theorem, can be included in an $\mf{sl}_2$-triple
$\{f,2x,e\}\subset\mf g$.
We have the corresponding $\ad x$-eigenspace decomposition
\begin{equation}\label{eq:grading}
\mf g=\bigoplus_{k\in\frac{1}{2}\mb Z}\mf g_{k}
\,\,\text{ where }\,\,
\mf g_k=\big\{a\in\mf g\,\big|\,[x,a]=ka\big\}
\,,
\end{equation}
so that $f\in\mf g_{-1}$, $x\in\mf g_{0}$ and $e\in\mf g_{1}$.
We shall denote, for $j\in\frac12\mb Z$, $\mf g_{\geq j}=\oplus_{k\geq j}\mf g_k$,
and similarly $\mf g_{\leq j}$.

A key role in the theory of $W$-algebras is played by the left ideal
\begin{equation}\label{0225:eq4}
I=
U(\mf g)\big\langle m-(f|m) \big\rangle_{m\in\mf g_{\geq1}}
\subset U(\mf g)
\,,
\end{equation}
and the corresponding left $\mf g$-module 
\begin{equation}\label{eq:M}
M=U(\mf g)/I\,.
\end{equation}
We shall denote by $\bar 1\in M$ the image of $1\in U(\mf g)$
in the quotient space.
Note that, by definition, $g\bar1=0$ if and only if $g\in I$.
\begin{lemma}\label{0303:lem4}
\begin{enumerate}[(a)]
\item
$U(\mf g)IU(\mf g_{\geq\frac12})\subset I$.
\item
$\mf g$ acts on the module $M$ by left multiplication,
and $\mf g_{\geq\frac12}$ acts on $M$ both by left and by right multiplication
(hence, also via adjoint action).
\end{enumerate}
\end{lemma}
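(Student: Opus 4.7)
The plan is to prove part (a) first, and then deduce part (b) as an easy corollary.

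For part (a), since $I$ is a left ideal by construction, the inclusion $U(\mf g)I\subset I$ is automatic, and the claim reduces to showing $IU(\mf g_{\geq\frac12})\subset I$. By the PBW theorem, this in turn reduces (by an obvious induction on the length of monomials) to showing that $uy\in I$ for every $u\in I$ and $y\in\mf g_{\geq\frac12}$. Since $I$ is generated as a left ideal by the elements $m-(f|m)$ with $m\in\mf g_{\geq1}$, and left multiplication by elements of $U(\mf g)$ is harmless, it suffices to prove that $(m-(f|m))y\in I$ for every $m\in\mf g_{\geq1}$ and $y\in\mf g_{\geq\frac12}$.

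The key computation is to commute $y$ to the left:
\begin{equation*}
(m-(f|m))y = y(m-(f|m)) + [m,y],
\end{equation*}
where the scalar $(f|m)$ commutes with $y$. The first summand is manifestly in $I$. For the second summand, note that $[m,y]\in[\mf g_{\geq1},\mf g_{\geq\frac12}]\subset\mf g_{\geq\frac32}\subset\mf g_{\geq1}$, so $[m,y]-(f|[m,y])$ is one of the defining generators of $I$. The heart of the argument is the vanishing
\begin{equation*}
(f|[m,y]) = ([f,m]|y) = 0,
\end{equation*}
which uses the invariance of the bilinear form together with the fact that $[f,m]\in\mf g_{\geq0}$ while $y\in\mf g_{\geq\frac12}$: the form pairs $\mf g_j$ with $\mf g_{-j}$ nontrivially, and here the only possible matching degrees would satisfy $-j\geq\frac12$ and $j\geq0$ simultaneously, which is impossible. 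Hence $[m,y]$ itself already lies in $I$, so $(m-(f|m))y\in I$, completing the proof of (a).

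For part (b), the action of $\mf g$ on $M=U(\mf g)/I$ by left multiplication is well defined precisely because $I$ is a left ideal. Part (a) says that $I$ is also stable under right multiplication by $U(\mf g_{\geq\frac12})$, so right multiplication by elements of $\mf g_{\geq\frac12}$ descends to a well-defined action on $M$. The adjoint action of $\mf g_{\geq\frac12}$ on $M$ is then the difference of these two commuting actions, hence is also well defined.

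The only nontrivial step is the vanishing $(f|[m,y])=0$ in the second paragraph, which I expect to be the main point requiring care; everything else is formal manipulation of ideals and the PBW theorem.
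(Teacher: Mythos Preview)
Your proof is correct and follows essentially the same approach as the paper's: reduce to showing $IU(\mf g_{\geq\frac12})\subset I$, take a typical element $g(m-(f|m))$ with $m\in\mf g_{\geq1}$, commute the element of $\mf g_{\geq\frac12}$ past $m-(f|m)$, and observe that the commutator term $[m,y]\in\mf g_{\geq\frac32}$ lies in $I$ because $(f|[m,y])=0$. The paper argues this vanishing directly from $f\in\mf g_{-1}$ being orthogonal to $\mf g_{\geq\frac32}$, while you use invariance to rewrite it as $([f,m]|y)$; both are equivalent one-line observations.
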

\begin{proof}
Clearly, $U(\mf g)I\subset I$, so it suffices to prove, for (a), 
that $IU(\mf g_{\geq\frac12})\subset I$,
namely that $I$ is a right module over $\mf g_{\geq\frac12}$.
Let $h=g(m-(f|m))\in I$, 
with $g\in U(\mf g)$ and $m\in\mf g_{\geq1}$,
and let $a\in\mf g_{\geq\frac12}$.
We have
$$
ha=g(m-(f|m))a
=ga(m-(f|m))+g[m,a]
\,.
$$
The first summand in the RHS obviously lies in $I$.
The second summand also lies in $I$ 
since $[m,a]\in\mf g_{\geq\frac32}$, which implies $(f|[m,a])=0$.
Part (b) is a restatement of (a) in terms of the module $M$.
\end{proof}
Consider the subspace
\begin{equation}\label{0225:eq3}
\widetilde{W}
:=
\big\{
w\in U(\mf g)\,\big|\,
[a,w]\bar 1=0\,\text{ in }\, M\,,\,\,\text{for all } a\in\mf g_{\geq\frac12}
\big\}
\,\subset U(\mf g)\,.
\end{equation}
\begin{lemma}\phantomsection\label{0225:prop1a}
\begin{enumerate}[(a)]
\item
$I\subset\widetilde{W}$.
\item
For $h\in I$ and $w\in\widetilde{W}$, we have $hw\in I$.
\item
$\widetilde{W}$ 
is a subalgebra of $U(\mf g)$.
\item
$I$ is a (proper) two-sided ideal of $\widetilde{W}$.
\end{enumerate}
\end{lemma}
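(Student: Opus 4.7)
The plan is to reduce everything to the reformulation: by the very definition of $M = U(\mf g)/I$, one has $u\bar 1 = 0$ in $M$ if and only if $u \in I$. Hence the defining condition of $\widetilde W$ becomes: $w \in \widetilde W$ if and only if $[a, w] \in I$ for every $a \in \mf g_{\geq\frac12}$. Combined with the fact that $I$ is a left ideal of $U(\mf g)$ by construction, and with Lemma \ref{0303:lem4}(a), which provides $I\, U(\mf g_{\geq\frac12}) \subset I$, all four parts should follow.

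For (a), given $h \in I$ and $a \in \mf g_{\geq\frac12}$, I would expand $[a,h] = ah - ha$. The term $ah$ lies in $U(\mf g)\, I \subset I$ by the left-ideal property, while $ha$ lies in $I\, U(\mf g_{\geq\frac12}) \subset I$ by Lemma \ref{0303:lem4}(a). Hence $[a,h] \in I$, so $h \in \widetilde W$ by the reformulation.

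Part (b) is where the real work sits. I would take a generator $h = g(m - (f|m))$ of $I$ with $g \in U(\mf g)$ and $m \in \mf g_{\geq 1}$, and commute $w$ past the factor $m - (f|m)$ (whose scalar part is central) to obtain
\begin{equation*}
hw \;=\; gw(m - (f|m)) + g\,[m, w].
\end{equation*}
The first summand lies in $U(\mf g)\,I \subset I$. For the second, since $m \in \mf g_{\geq 1} \subset \mf g_{\geq\frac12}$ and $w \in \widetilde W$, the reformulation yields $[m,w] \in I$, whence $g[m,w] \in U(\mf g)\, I \subset I$. Extending by linearity over the generators of $I$ then establishes (b). I expect this to be the main obstacle, as it is the only step that genuinely uses both defining conditions simultaneously.

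Parts (c) and (d) should then be formal consequences. For (c), given $w_1, w_2 \in \widetilde W$ and $a \in \mf g_{\geq\frac12}$, the Leibniz rule gives
\begin{equation*}
[a, w_1 w_2] \;=\; [a, w_1]\, w_2 + w_1\, [a, w_2].
\end{equation*}
The first summand lies in $I$ by part (b) applied to $[a,w_1] \in I$ and $w_2 \in \widetilde W$; the second lies in $I$ because $[a,w_2] \in I$ and $I$ is a left ideal of $U(\mf g)$. Hence $[a, w_1 w_2] \in I$ and $w_1 w_2 \in \widetilde W$. For (d), the inclusion $\widetilde W\, I \subset I$ is the left-ideal property, $I\, \widetilde W \subset I$ is part (b), and $I \subset \widetilde W$ is part (a); properness reduces to $1 \notin I$, which holds since $\bar 1 \neq 0$ in $M$ by PBW.
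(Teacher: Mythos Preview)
Your proof is correct and follows essentially the same approach as the paper's: the reformulation $u\bar1=0 \Leftrightarrow u\in I$, the use of Lemma \ref{0303:lem4}(a) for part (a), the key identity $hw = gw(m-(f|m)) + g[m,w]$ for part (b), and the Leibniz rule combined with (b) for part (c) all match the paper's argument line by line. Your treatment of properness in (d) via $1\notin I$ is slightly more explicit than the paper, but this is a trivial addition.
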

\begin{proof}
By Lemma \ref{0303:lem4},
$I$ is preserved by both the left and right multiplication by elements of $\mf g_{\geq\frac12}$,
hence by their adjoint action. It follows that $[a,I]\bar1=0$, proving (a).
For (b), let $h=A(m-(f|m))\in I$, as above, and $w\in\widetilde{W}$. 
We have
\begin{equation}\label{0303:eq2}
hw=A(m-(f|m))w=
Aw(m-(f|m))+A[m,w]\,.
\end{equation}
The first summand in the RHS of \eqref{0303:eq2} obviously lies in $I$.
The second summand also lies in $I$ by definition of $\widetilde{W}$
(since $m\in\mf g_{\geq1}$).
This proves part (b).
If $w_1,w_2\in\widetilde{W}$ and $a\in\mf g_{\geq\frac12}$,
then
\begin{equation}\label{0225:eq7}
[a,w_1w_2]\bar 1=[a,w_1]w_2\bar 1+w_1[a,w_2]\bar 1
\,.
\end{equation}
Since, by assumption, $w_2\in\widetilde{W}$, we have $[a,w_2]\bar1=0$.
On the other hand, we also have $[a,w_1]\in I$,
so, by (b), $[a,w_1]w_2\in I$, and therefore $[a,w_1]w_2\bar 1=0$.
This proves claim (c).
Recall that $I$ is a left ideal of $U(\mf g)$, hence of $\widetilde{W}$,
and by (b) it is also a right ideal of $\widetilde{W}$,
proving (d).
%
\end{proof}
\begin{proposition}\label{0225:prop1}
The quotient
\begin{equation}\label{0225:eq8}
W(\mf g,f)
=
M^{\ad\mf g_{\geq\frac12}}
=
\widetilde{W}/I
\end{equation}
has a natural structure of a unital associative algebra,
induced by that of $U(\mf g)$.
\end{proposition}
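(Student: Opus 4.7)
The plan is to assemble the proposition directly from the preceding lemmas: first establish the vector-space identification $M^{\ad\mf g_{\geq\frac12}}=\widetilde W/I$, and then transport the associative multiplication of $U(\mf g)$ to the quotient, using Lemma \ref{0225:prop1a}(c)–(d).

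First I would note that by Lemma \ref{0303:lem4}(b), the adjoint action of $\mf g_{\geq\frac12}$ on $M$ is well defined: for $a\in\mf g_{\geq\frac12}$ and $w\bar1\in M$, we have $a\cdot_{\ad}(w\bar1)=aw\bar1-wa\bar1=[a,w]\bar1$, and this makes sense because both left and right multiplications by $a$ preserve $I$. Comparing with the definition \eqref{0225:eq3} of $\widetilde W$, the image of $\widetilde W$ under the canonical projection $U(\mf g)\twoheadrightarrow M$ is exactly $M^{\ad\mf g_{\geq\frac12}}$. Since $I\subset\widetilde W$ by Lemma \ref{0225:prop1a}(a), the kernel of this restricted projection is precisely $I$, yielding a vector space isomorphism $\widetilde W/I\simeq M^{\ad\mf g_{\geq\frac12}}$.

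Next, I would invoke Lemma \ref{0225:prop1a}(c) and (d): $\widetilde W$ is a subalgebra of $U(\mf g)$ and $I$ is a two-sided ideal of $\widetilde W$. Therefore the quotient $\widetilde W/I$ inherits a well-defined associative multiplication from that of $U(\mf g)$. The unit is the class of $1\in U(\mf g)$, which clearly lies in $\widetilde W$ since $[a,1]=0$ for all $a$, and is nonzero in the quotient because $I$ is a proper ideal of $\widetilde W$ (for instance, $1\notin I$, since otherwise $\bar1=0$ in $M$, which is false).

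There is no real obstacle here: the only thing to verify, via the identification above, is that the induced product on $M^{\ad\mf g_{\geq\frac12}}$ is independent of the choice of representatives $w_1,w_2\in\widetilde W$ of invariants $\overline{w_1},\overline{w_2}$, and this is exactly the content of $I$ being a two-sided ideal of $\widetilde W$. Thus all of the real work was already done in Lemma \ref{0225:prop1a}, and the proposition follows by assembly.
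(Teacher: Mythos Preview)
Your proposal is correct and follows essentially the same approach as the paper, which simply cites Lemma~\ref{0225:prop1a}(c) and (d) as an immediate consequence. You have merely spelled out in more detail the identification $M^{\ad\mf g_{\geq\frac12}}\simeq\widetilde W/I$ and the unitality, which the paper leaves implicit.
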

\begin{proof}
It follows immediately from Lemma \ref{0225:prop1a}(c) and (d)
\end{proof}
\begin{definition}\label{def:Walg}
The \emph{finite} $W$-\emph{algebra}
associated to the Lie algebra $\mf g$ and its nilpotent element $f$
is the algebra $W(\mf g,f)$ defined in \eqref{0225:eq8}.
\end{definition}
\begin{remark}\label{rem:good-grading}
By a result of Kostant
all $\mf{sl}_2$-triples containing $f$ are conjugate in $\mf g$.
It follows that 
the $W$-algebra depends only on the nilpotent element $f$ (in fact only of its adjoint orbit).
In fact, one can define the $W$-algebra by replacing the Dynkin grading \eqref{eq:grading}
by an arbitrary good grading \cite{EK05}.
All the resulting $W$-algebras turn out to be isomorphic \cite{BG07}.
\end{remark}

\subsection{
The map $\rho$
}\label{sec:3.2}

Consider the Lie subalgebras $\mf g_{\leq0},\,\mf g_{\geq\frac12}\,\subset\mf g$
and the corresponding universal enveloping algebras 
$U(\mf g_{\leq0}),\,U(\mf g_{\geq\frac12})\,\subset U(\mf g)$.
By the PBW Theorem we have a canonical linear map (not an algebra isomorphism)
\begin{equation}\label{0224:eq2}
U(\mf g)
\,\stackrel{\sim}{\longrightarrow}\,
U(\mf g_{\leq 0})\otimes U(\mf g_{\geq\frac12})
\,.
\end{equation}
The ideal $I$ in \eqref{0225:eq4} is mapped, via the bijection \eqref{0224:eq2} to
$I\stackrel{\sim}{\rightarrow}U(\mf g_{\leq 0})\otimes I_{\geq\frac12}$,
where
$$
I_{\geq\frac12}=
U(\mf g_{\geq\frac12})\big\langle m-(f|m) \big\rangle_{m\in\mf g_{\geq1}}
\subset U(\mf g_{\geq\frac12})
\,.
$$
It is immediate to check, 
with the same computations as those used for the proof of Lemma \ref{0303:lem4},
that $I_{\geq\frac12}\subset U(\mf g_{\geq\frac12})$ is a two-sided ideal,
and the quotient
\begin{equation}\label{0224:eq3}
F(\mf g_{\frac12})
=
U(\mf g_{\geq\frac12})
\big/ I_{\geq\frac12}
\end{equation}
is isomorphic to the Weyl algebra of $\mf g_{\frac12}$
with respect to the non-degenerate skewsymmetric bilinear form
\begin{equation}\label{neutral}
\langle a|b\rangle\,=\,(f|[a,b])
\,\,,\,\,\,\,
a,b\in\mf g_{\frac12}
\,.
\end{equation}
Hence, the bijection \eqref{0224:eq2} induces the bijection
\begin{equation}\label{0224:eq4}
M=U(\mf g)/I\,\stackrel{\sim}{\longrightarrow}U:=U(\mf g_{\leq 0})\otimes F(\mf g_{\frac12})
\,\,,\,\,\,\,
g\bar1\mapsto\rho(g)\,.
\end{equation}
This defines the surjective map $\rho:\,U(\mf g)\twoheadrightarrow U$,
whose kernel is $\ker\rho=I$.

Note that the map $\rho$ is a linear transformation
from the associative algebra $U(\mf g)$ to the associative algebra
$U:=U(\mf g_{\leq0})\otimes F(\mf g_{\frac12})$,
but it is NOT an algebra homomorphism.
We can write an explicit formula for its action:
for $A\in U(\mf g_{\leq0})$, $q\in U(\mf g_{\geq\frac12})$, and $u\in U(\mf g_{\geq1})$,
we have
\begin{equation}\label{0225:eq2}
\rho(Aqu)
=
\chi(u)A\otimes\pi_{\frac12}(q)
\in U=U(\mf g_{\leq0})\otimes F(\mf g_{\frac12})
\,,
\end{equation}
where $\chi:\,U(\mf g_{\geq1})\twoheadrightarrow \mb F$
if the character defined by $\chi(m)=(f|m)$ for $m\in\mf g_{\geq1}$,
and $\pi_{\frac12}:\,U(\mf g_{\geq\frac12})\twoheadrightarrow F(\mf g_{\frac12})$
is the natural quotient map.

\subsection{
The $W$-algebra as a subalgebra of $U^\circ$.
}\label{sec:3.3}

By definition, the $W$-algebra $W(\mf g,f)$ is a subspace
of the module $M=U(\mf g)/I$,
which can be identified, via \eqref{0224:eq4}, with the algebra $U$.
It is natural to ask whether $W(\mf g,f)$ can be identified, via \eqref{0224:eq4}, 
with a subalgebra of $U$.
We shall prove, in Proposition \ref{thm:rho} below,
that this is the case, 
provided that we take the opposite product in the Weyl algebra $F(\mf g_{\frac12})$.

\begin{lemma}\phantomsection\label{0302:lem1}
For $w\in\widetilde{W}$, $g\in U(\mf g)$, $q\in U(\mf g_{\geq\frac12})$, 
and $u\in U(\mf g_{\geq1}\!)$, we have 
$\rho(gquw)=\chi(u)\rho(gw)(1\otimes \pi_{\frac12}(q))$.
\end{lemma}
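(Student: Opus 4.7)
The plan is to first use the condition $w\in\widetilde{W}$ to commute $qu$ past $w$ modulo $I$, reducing the left-hand side to $\rho(gwqu)$, and then evaluate $\rho(gwqu)$ directly from its PBW description \eqref{0225:eq2}, recognizing the result as $\chi(u)\rho(gw)(1\otimes\pi_{\frac12}(q))$.

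First I would establish that, for every $X\in U(\mf g_{\geq\frac12})$ and every $w\in\widetilde{W}$, one has $[X,w]\in I$. This is the extension of the defining property of $\widetilde{W}$ (stated only for $a\in\mf g_{\geq\frac12}$) to all of $U(\mf g_{\geq\frac12})$. By linearity it suffices to treat PBW monomials $X=a_1\cdots a_k$ with $a_i\in\mf g_{\geq\frac12}$, and by the Leibniz rule
\[
[a_1\cdots a_k,w]=\sum_{i=1}^k a_1\cdots a_{i-1}\,[a_i,w]\,a_{i+1}\cdots a_k,
\]
each summand lies in $U(\mf g)\,I\,U(\mf g_{\geq\frac12})\subset I$ by Lemma \ref{0303:lem4}(a), because $[a_i,w]\in I$ by the definition of $\widetilde{W}$. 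Applying this with $X=qu\in U(\mf g_{\geq\frac12})$ and multiplying on the left by $g$, we get $gquw-gwqu=g[qu,w]\in I=\ker\rho$, so $\rho(gquw)=\rho(gwqu)$.

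Next I would evaluate $\rho(gwqu)$ via the PBW isomorphism. Write $gw=\sum_i A_iB_i$ with $A_i\in U(\mf g_{\leq 0})$ and $B_i\in U(\mf g_{\geq\frac12})$. Since $B_iqu\in U(\mf g_{\geq\frac12})$, formula \eqref{0225:eq2} (with the trivial ``$u=1$'' factor in $U(\mf g_{\geq 1})$) gives
\[
\rho(gwqu)=\sum_i A_i\otimes \pi_{\frac12}(B_iqu).
\]
Now $\pi_{\frac12}$ is an algebra homomorphism, and $\pi_{\frac12}(u)=\chi(u)\in\mb F$ for $u\in U(\mf g_{\geq 1})$ (immediate from $m-(f|m)\in I_{\geq\frac12}$ for $m\in\mf g_{\geq 1}$, extended multiplicatively using that $\chi$ is a character on $U(\mf g_{\geq 1})$). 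Hence
\[
\pi_{\frac12}(B_iqu)=\pi_{\frac12}(B_i)\,\pi_{\frac12}(q)\,\chi(u)=\chi(u)\,\pi_{\frac12}(B_i)\,\pi_{\frac12}(q),
\]
and comparing with $\rho(gw)(1\otimes\pi_{\frac12}(q))=\sum_i A_i\otimes\pi_{\frac12}(B_i)\pi_{\frac12}(q)$ gives the claim.

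The main step is the first one: upgrading the defining commutation condition for $\widetilde{W}$ from Lie algebra generators to arbitrary elements of $U(\mf g_{\geq\frac12})$. The key technical input is Lemma \ref{0303:lem4}(a), which asserts that $I$ is closed under right multiplication by $U(\mf g_{\geq\frac12})$; without this, the induction on the length of a PBW monomial would not go through. Everything after that point is essentially a bookkeeping exercise with the PBW decomposition and the algebra-homomorphism property of $\pi_{\frac12}$.
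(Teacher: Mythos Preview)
Your proof is correct and follows essentially the same approach as the paper: reduce to $\rho(gwqu)$ using $[qu,w]\in I$ (via the defining property of $\widetilde{W}$ and Lemma~\ref{0303:lem4}(a)), then evaluate with formula~\eqref{0225:eq2}. The paper's proof is simply terser, stating ``the claim then follows by equation~\eqref{0225:eq2}'' where you have spelled out the PBW decomposition of $gw$ and the fact that $\pi_{\frac12}|_{U(\mf g_{\geq 1})}=\chi$.
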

\begin{proof}
By definition of $\widetilde{W}$, we have $aw-wa\in I$ for every $a\in\mf g_{\geq\frac12}$,
and, by Lemma \ref{0303:lem4}, 
$I$ is preserved by the right multiplication by elements of $\mf g_{\geq\frac12}$.
Hence, $\rho(gquw)=\rho(gwqu)$.
The claim then follows by equation \eqref{0225:eq2}.
\end{proof}
\begin{proposition}\label{thm:rho}
The linear map $\rho$ restricts to an algebra homomorphism
\begin{equation}\label{0224:eq5}
\rho:\,
\widetilde{W}
\,\longrightarrow\,
U^\circ:=U(\mf g_{\leq 0})\otimes F^{\text{op}}(\mf g_{\frac12})
\,,
\end{equation}
where $F^{\text{op}}(\mf g_{\frac12})$ denotes the Weyl algebra $F(\mf g_{\frac12})$
with the opposite product.
Hence, 
the bijective linear map \eqref{0224:eq4}
restricts to an algebra embedding
$W(\mf g,f)\hookrightarrow U^\circ$.
\end{proposition}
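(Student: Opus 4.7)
The strategy is to reduce the claim that $\rho$ restricts to an algebra homomorphism $\widetilde W \to U^\circ$ to a direct application of Lemma \ref{0302:lem1}. Since \eqref{0224:eq4} is already a linear bijection $M \to U$ and $\ker\rho = I \subset \widetilde W$ by Lemma \ref{0225:prop1a}(a), once multiplicativity on $\widetilde W$ is established the ``Hence'' part follows at once: $\rho|_{\widetilde W}$ descends to an injective algebra homomorphism $W(\mf g,f) = \widetilde W / I \hookrightarrow U^\circ$.

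For multiplicativity, fix $w_1, w_2 \in \widetilde W$. First decompose $w_1$ via the refined PBW isomorphism
\[
U(\mf g) \;\cong\; U(\mf g_{\leq 0}) \otimes U(\mf g_{\frac12}) \otimes U(\mf g_{\geq 1}),
\]
which is available because $\mf g_{\geq 1}$ is an ideal of $\mf g_{\geq \frac12}$, and write $w_1 = \sum_i A_i q_i u_i$ with $A_i \in U(\mf g_{\leq 0})$, $q_i \in U(\mf g_{\frac12})$, $u_i \in U(\mf g_{\geq 1})$. Applying Lemma \ref{0302:lem1} term by term (with $g = A_i$, $q = q_i$, $u = u_i$, $w = w_2$) gives
\[
\rho(w_1 w_2) \;=\; \sum_i \chi(u_i)\, \rho(A_i w_2)\, \bigl(1 \otimes \pi_{\frac12}(q_i)\bigr).
\]
Because $A_i \in U(\mf g_{\leq 0})$ can be absorbed into the $U(\mf g_{\leq 0})$-factor of any PBW expansion of $w_2$, a direct check yields $\rho(A_i w_2) = (A_i \otimes 1)\rho(w_2)$. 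Separately, applying \eqref{0225:eq2} to $w_1$ itself gives $\rho(w_1) = \sum_i \chi(u_i)\, A_i \otimes \pi_{\frac12}(q_i)$. The final step is a routine comparison: for any $B \in U(\mf g_{\leq 0})$ and $Q \in F(\mf g_{\frac12})$ the product rule in $U^\circ$ gives
\[
(A_i \otimes \pi_{\frac12}(q_i)) \cdot^\circ (B \otimes Q) \;=\; A_i B \otimes Q\,\pi_{\frac12}(q_i) \;=\; (A_i \otimes 1)(B \otimes Q)(1 \otimes \pi_{\frac12}(q_i)),
\]
and summing over the PBW expansion of $\rho(w_2)$ shows that $\rho(w_1) \cdot^\circ \rho(w_2)$ coincides with the expression for $\rho(w_1 w_2)$ obtained above.

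The only subtle point --- and the reason one is forced to pass to the opposite product on the Weyl factor --- is the comparison in the last display. In $\rho(w_1 w_2)$ the factor $\pi_{\frac12}(q_i)$ inherited from $w_1$ is pushed to the \emph{right} of $\rho(w_2)$ by Lemma \ref{0302:lem1}, whereas in $\rho(w_1)$ it sits naturally on the \emph{left}. Reversing the multiplication on the $F(\mf g_{\frac12})$ tensor factor is exactly the bookkeeping required to identify the two expressions. No further obstacle arises; the entire argument rests on Lemma \ref{0302:lem1}, which in turn uses that $\widetilde W$ commutes with $\mf g_{\geq \frac12}$ modulo $I$ together with the fact that $I$ is a right $U(\mf g_{\geq \frac12})$-module.
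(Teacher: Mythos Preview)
Your proof is correct and follows essentially the same approach as the paper: both arguments hinge on Lemma \ref{0302:lem1} applied to a PBW expansion of $w_1$, followed by expanding $w_2$ and comparing with the product in $U^\circ$. The only cosmetic difference is that you use a three-factor PBW decomposition $U(\mf g_{\leq 0})\otimes S(\mf g_{\frac12})\otimes U(\mf g_{\geq 1})$ of $w_1$ and invoke the lemma with a nontrivial $u_i$, whereas the paper uses the coarser two-factor splitting $U(\mf g_{\leq 0})\otimes U(\mf g_{\geq\frac12})$ and applies the lemma with $u=1$; neither refinement is needed for the argument to go through.
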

\begin{proof}
Let $w_1,w_2\in\widetilde{W}$.
By the PBW Theorem, we can expand them as
\begin{equation}\label{0225:eq15}
w_1
=
\sum
Ap
\,\,,\,\,\,\,
w_2
=
\sum
Bq
\,,
\end{equation}
where $A,B\in U(\mf g_{\leq0})$, and $p,q\in U(\mf g_{\geq\frac12})$.
By \eqref{0225:eq2}, we have
$$
\rho(w_1)
=
\sum
A\otimes\pi_{\frac12}(p)
\,\,,\,\,\,\,
\rho(w_2)
=
\sum
B\otimes\pi_{\frac12}(q)
\,.
$$
Hence, their product 
in the algebra $U=U(\mf g_{\leq0})\otimes F^{\text{op}}(\mf g_{\frac12})$
(which we denote by $\circ$) is
\begin{equation}\label{0225:eq9}
\begin{array}{c}
\vphantom{\Big(}
\displaystyle{
\rho(w_1)\circ\rho(w_2)
=
\sum
AB\otimes\pi_{\frac12}(qp)
\,.}
\end{array}
\end{equation}
Expanding $w_1$ as in \eqref{0225:eq15}
and applying Lemma \ref{0302:lem1}, we get
$$
\rho(w_1w_2)
=
\sum
\rho(Apw_2)
=
\sum
\rho(Aw_2)(1\otimes\pi_{\frac12}(p))
\,.
$$
Next, we expand $w_2$ as in \eqref{0225:eq15} and we
use formula \eqref{0225:eq2}, to get
\begin{equation}\label{0225:eq17}
\rho(w_1w_2)
=
\sum
\rho(ABq)(1\otimes\pi_{\frac12}(p))
=
\sum
AB\otimes\pi_{\frac12}(q)\pi_{\frac12}(p)
=
\sum
AB\otimes\pi_{\frac12}(qp)
\,.
\end{equation}
Comparing \eqref{0225:eq9} and \eqref{0225:eq17},
we get $\rho(w_1w_2)=\rho(w_1)\circ\rho(w_2)$, as claimed.
\end{proof}
\begin{remark}
There are analogues of Proposition \ref{thm:rho}
also for the classical and quantum affine $W$-algebras.
In the classical affine case,
the Poisson vertex algebra $\mc W(\mf g,f)$ is, 
by construction, contained in $\mc V(\mf g_{\leq\frac12})$,
the algebra of differential polynomials over $\mf g_{\leq\frac12}$ (cf. \cite[Eq.(3.10)]{DSKV13})
(while in the quantum case $U(\mf g_{\leq\frac12})$ does not make sense,
since $\mf g_{\leq\frac12}$ is not a Lie algebra).
Moreover, the associative algebra embedding 
$W(\mf g,f)\hookrightarrow U^\circ$ provided by Proposition \ref{thm:rho}
corresponds, in the classical affine case, to the Poisson vertex algebra
embedding $\mc W(\mf g,f)\hookrightarrow\mc V(\mf g_{\leq0})\otimes\mc F(\mf g_{\frac12})$
described in \cite[Sec.8]{DSKVjems}.
In the quantum affine case,
the analogue of Proposition \ref{thm:rho} is contained in \cite{DSK06}.
Indeed,
by Theorem 5.9 of that paper, the cohomology 
$W_k(\mf g,f)=H(V(R),d)$ is concentrated at charge zero,
hence by Theorem 5.7 it is a subalgebra
of the tensor product of the universal affine vertex algebra
over $J_{\mf g_{\leq0}}$
(which is the same as the affine vertex algebra over $\mf g_{\leq0}$,
but with a shifted level)
and the vertex algebra of free fermions over $\mf g_{\frac12}$
with the skewsymmetric bilinear form \eqref{neutral}.
\end{remark}

\begin{remark}\label{rem:premet}
Proposition \ref{thm:rho} is due to Premet, \cite[Prop.2.2]{Pre07}.
We included our proof for completeness.
\end{remark}

\section{Operator of Yangian type for the \texorpdfstring{$W$}{W}-algebra \texorpdfstring{$W(\mf{gl}_N,f)$}{W(gl\_N,f)}
}\label{sec:4}

\subsection{Setup and notation}\label{sec:4.1}

We review here the notation introduced in \cite[Sec.4]{DSKV16b}.

A nilpotent orbit in $\mf{gl}_N$ is associated 
to a partition, $p=(p_1,\dots,p_r)$ of $N$, where $p_1\geq\dots\geq p_r>0$,
$\sum_ip_i=N$.
We let $r_1$ be 
the multiplicity of $p_1$ in the partition $p$.

Let $V$ be the $N$-dimensional vector space over $\mb F$ with basis 
$\{e_{ih}\}_{(i,h)\in\mc T}$,
where $\mc T$ is the following index set (of cardinality $N$)
\begin{equation}\label{eq:J}
\mc T=\big\{(i,h)\in\mb Z_{\geq0}^2\,\big|\,1\leq i\leq r,\,1\leq h\leq p_i\big\}
\,.
\end{equation}
The Lie algebra $\mf g=\mf{gl}(V)\simeq\mf{gl}_N$ has a basis 
consisting of the elementary matrices $e_{(ih),(jk)}$, $(ih),(jk)\in\mc T$
(as before, we denote by $E_{(ih),(jk)}$ the elementary matrix
and by $e_{(ih),(jk)}$ the same matrix, when viewed as elements of the Lie algebra $\mf g$).
The nilpotent element $f\in\mf g$ associated to the partition $p$
is the ``shift'' operator
\begin{equation}\label{eq:f}
f=\sum_{(ih)\in\mc T\,|\,h<p_i}e_{(i,h+1),(ih)}\,\in\mf g
\,.
\end{equation}
(As before, in order to keep distinct elements from $\mf{gl}_N$ from
matrices in $\Mat_{N\times N}\mb F$,
we shall also denote, 
$F=\sum_{(ih)\in\mc T\,|\,h<p_i}E_{(i,h+1),(ih)}\,\in\Mat_{N\times N}\mb F$.)
If we order the indices $(ih)$ lexicographically,
$f$ has Jordan form with nilpotent Jordan blocks of sizes $p_1,\dots,p_r$.

The Dynkin grading \eqref{eq:grading} for the nilpotent element $f$
is defined by the adjoint action of the diagonal matrix 
$x=\sum_{(ih)\in\mc T}
\frac12(p_i+1-2h)
e_{(ih),(ih)}$:
\begin{equation}\label{eq:adx}
(\ad x)e_{(ih),(jk)}=\Big(\frac12(p_i-p_j)-(h-k)\Big)e_{(ih),(jk)}
\,.
\end{equation}
The depth of this grading is $d=p_1-1$,
and the $\ad x$-eigenspace of degree $d$ is
\begin{equation}\label{eq:gd}
\mf g_d
=
\Span_{\mb F}
\big\{
E_{(i1),(jp_1)}\,\big|\,
i,j=1,\dots,r_1
\big\}
\,.
\end{equation}

In $\mf g_d$ there is a ``canonical'' element of maximal rank $r_1$,
$S_1=\sum_{i=1}^{r_1}E_{(i1),(ip_1)}$.
By \cite[Prop.4.1]{DSKV16b}, 
the matrix $S_1$ factors as $S_1=I_1J_1$, where
\begin{equation}\label{eq:factor1}
I_1=\sum_{i=1}^{r_1}
E_{(i1),i}
\,\in\Mat_{N\times r_1}\mb F
\,\,,\,\,\,\,
J_1=\sum_{i=1}^{r_1}
E_{i,(ip_1)}
\,\in\Mat_{r_1\times N}\mb F
\,.
\end{equation}

\subsection{The ``shift'' matrix $D$}
\label{sec:4.2}

The following diagonal matrix will play an important role
in the next sections:
\begin{equation}\label{0226:eq5}
D=\sum_{(ih)\in\mc T}d_{(ih)}E_{(ih),(ih)}
\,,
\end{equation}
where
\begin{equation}\label{0226:eq6}
d_{(ih)}=-\dim(\mf g_{\geq1}e_{(ih)})
\,.
\end{equation}
Note that, by \eqref{eq:adx}, the elementary matrix $e_{(jk),(ih)}$ lies in $\mf g_{\geq1}$
precisely when $\frac12(p_j-p_i)-(k-h)\geq1$.
Hence, the numbers \eqref{0226:eq6} are given by:
$$
d_{(ih)}
=-\#\Big\{
(jk)\in\mc T
\,\Big|\,
\frac12(p_j-p_i)-(k-h)\geq1
\Big\}
\,.
$$
These numbers have the following simple combinatorial meaning.
Consider the symmetric (with respect to the $y$-axis) pyramid,
with boxes indexed by $(ih)\in\mc T$,
with $i$ and $h$ being respectively the row index (starting from the bottom)
and the column index (starting from the right) (cf. \cite[Fig.1]{DSKV16b}).
The $x$-coordinate of the center of the box labeled $(ih)$ is 
\begin{equation}\label{eq:xih}
x{(ih)}=\frac12(p_i+1-2h)
\end{equation}
(which is the $x$-eigenvalue on $e_{(ih)}\in V$).
Hence, the difference between the $x$-coordinates of the boxes $(jk)$ and $(ih)$ is
$$
x{(jk)}-x{(ih)}=\frac12(p_j-p_i)-(k-h)
\,.
$$
Therefore, the number $-d_{(ih)}$
is the number of boxes in the pyramid which are entirely to the right of the box labeled $(ih)$.

For example, for the partition $(3,2,1)$ of $6$, we have the pyramid in Figure \ref{fig:pyramid},

\begin{figure}[h]
\setlength{\unitlength}{0.14in}
\centering
\begin{picture}(30,9)

\put(13,7){\framebox(2,2){(31)}}

\put(12,5){\framebox(2,2){(22)}}
\put(14,5){\framebox(2,2){(21)}}

\put(11,3){\framebox(2,2){(13)}}
\put(13,3){\framebox(2,2){(12)}}
\put(15,3){\framebox(2,2){(11)}}

\put(10,2){\vector(1,0){8}}
\put(18,1){$x$}

\put(12,1.6){\line(0,1){0.8}}
\put(13,1.8){\line(0,1){0.4}}
\put(14,1.6){\line(0,1){0.8}}
\put(15,1.8){\line(0,1){0.4}}
\put(16,1.6){\line(0,1){0.8}}

\put(13.8,0.6){0}
\put(15.8,0.6){1}
\put(11.6,0.6){-1}

\put(14.8,-0.3){\tiny{$\frac12$}}
\put(12.6,-0.3){\tiny{$-\frac12$}}

\end{picture}
\caption{} 
\label{fig:pyramid}
\end{figure}

\noindent
and, if we order the index set $\mc T$ lexicographically, 
the diagonal matrix $D$ is the following
$$
D=
\diag(0,-1,-4,0,-2,-1)
\,.
$$

\subsection{The matrix $L(z)$}
\label{sec:4.3}

Recall the matrix $A(z)=z\id_N+E$ defined by \eqref{eq:A},
which is an operator of Yangian type for $U(\mf{gl}_N)$.
We want to find an analogous operator for the $W$-algebra $W=W(\mf g,f)$,
for $\mf g=\mf{gl}_N$ and its nilpotent element $f$.
More precisely, we will construct a matrix 
$L(z)\in\Mat_{r_1\times r_1}W((z^{-1}))$,
where $r_1$ is the multiplicity of $p_1$ in the partition $p$,
which is an operator of Yangian type 
for the algebra $W(\mf g,f)$.

The matrix $L(z)$ is constructed as follows.
%
%
Consider the matrix
\begin{equation}\label{eq:rhoA}
z\id_N+F+\pi_{\leq\frac12}E+D
\,\in\Mat_{N\times N}U(\mf g)[z]
\,,
\end{equation}
where 
$F$ is as in \eqref{eq:f} (rather, as in the parenthetical observation after \eqref{eq:f}),
$D$ is the ``shift'' matrix \eqref{0226:eq5},
$E$ is the matrix \eqref{eq:E},
and 
$$
\pi_{\leq\frac12}E
=
\sum_{\substack{ (ih),(jk)\in\mc T \\ (x{(jk)}-x{(ih)}\leq\frac12) }}
e_{(jk)(ih)}E_{(ih)(jk)}
\in\Mat_{N\times N}U(\mf g)
\,.
$$
We shall prove that,
if we view \eqref{eq:rhoA}
as a matrix over $U(\mf g)((z^{-1}))$,
then its $(I_1,J_1)$ quasideterminant exists,
where $I_1$ and $J_1$ are the matrices \eqref{eq:factor1}.
The matrix $L(z)$ is defined as the image in the quotient module $M=U(\mf g)/I$
of this quasideterminant:
\begin{equation}\label{eq:La}
L(z)
=
\widetilde{L}(z)\bar 1
\,\in\Mat_{r_1\times r_1} M((z^{-1}))
\,\,\text{ where }\,\,
\widetilde{L}(z)
=
|z\id_N+F+\pi_{\leq\frac12}E+D|_{I_1J_1}
\,.
\end{equation}

The main result of the paper is that
the entries of the coefficients of $L(z)$ actually lie in the $W$-algebra 
$W(\mf g,f)\subset M$,
and the matrix $L(z)$ satisfies the Yangian identity \eqref{eq:adler}
(with respect to the associative product of the $W$-algebra $W(\mf g,f)$).
This is stated in Theorems \ref{thm:main1} and \ref{thm:main2} below.
Before stating them, we prove, 
in Section \ref{sec:4.4},
that the generalized quasideterminant defining $\widetilde{L}(z)$ exists.

\subsection{$L(z)$ exists}
\label{sec:4.4}

\begin{proposition}\label{thm:L1}
\begin{enumerate}[(a)]
\item
The matrix $z\id_N+F+\pi_{\leq\frac12}E+D$ is invertible in the algebra 
$\Mat_{N\times N}U(\mf g)((z^{-1}))$.
\item
The matrix $J_1(z\id_N+F+\pi_{\leq\frac12}E+D)^{-1}I_1$
is a Laurent series in $z^{-1}$
of degree (=the largest power of $z$ with non-zero coefficient) 
equal to $-p_1$,
and leading coefficient $(-1)^{p_1-1}\id_{r_1}$.
In particular, it is invertible in $\Mat_{r_1\times r_1}U(\mf g)((z^{-1}))$.
\item
Consequently,
the quasideterminant defining $\widetilde{L}(z)$ (cf. \eqref{eq:La}) exists
in the algebra $\Mat_{r_1\times r_1}U(\mf g)((z^{-1}))$.
\end{enumerate}
\end{proposition}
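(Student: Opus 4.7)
The plan is to handle (a) directly, then use a weight-grading argument for (b), and extract (c) as an immediate corollary.

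For part (a), observe that $F$, $\pi_{\leq\frac12}E$, and $D$ are all independent of $z$. Writing $M = F + \pi_{\leq\frac12}E + D \in \Mat_{N\times N}U(\mf g)$, the matrix in question has the form $z\id_N + M$, and its inverse in $\Mat_{N\times N}U(\mf g)((z^{-1}))$ is the geometric series
\begin{equation*}
(z\id_N+M)^{-1} = \sum_{k\geq 0}(-1)^k z^{-k-1}M^k.
\end{equation*}

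For part (b), I would substitute this expansion and study the matrices $J_1 M^k I_1 \in \Mat_{r_1\times r_1}U(\mf g)$. The tool is the Dynkin $x$-weight \eqref{eq:xih}: assign to the matrix unit $E_{(ih),(jk)}$ the weight $x{(jk)}-x{(ih)}$. Under this grading, $F$ contributes only at weight $1$; $D$ contributes only on the diagonal, at weight $0$; and by definition $\pi_{\leq\frac12}E$ only contributes at positions of weight $\leq\tfrac12$. Hence every entry of $M$ has weight $\leq 1$, and the unique weight-$1$ piece is $F$. Expanding $(M^k)_{(\beta p_1),(\alpha 1)}$ as a sum over paths, the total weight along any non-vanishing path telescopes to $x{(\alpha 1)}-x{(\beta p_1)} = p_1-1$. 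Since each step has weight $\leq 1$, the path must have length at least $p_1-1$, so $J_1 M^k I_1 = 0$ for $k<p_1-1$. At $k=p_1-1$, every step must realize weight exactly $1$, forcing $M^{p_1-1}$ to reduce to $F^{p_1-1}$ on the relevant entries. A direct computation gives $F^{p_1-1}e_{(\alpha 1)}=e_{(\alpha p_1)}$ for $\alpha\leq r_1$, which translates into $J_1 F^{p_1-1}I_1 = \id_{r_1}$. Combining, the leading term of the Laurent series is $(-1)^{p_1-1}z^{-p_1}\id_{r_1}$, as stated.

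Part (c) is then an immediate consequence: since the leading coefficient of $J_1(z\id_N+M)^{-1}I_1$ is $\pm\id_{r_1}$, the series is invertible in $\Mat_{r_1\times r_1}U(\mf g)((z^{-1}))$, so the generalized quasideterminant of Definition \ref{def:gen-quasidet} exists.

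The only subtlety I foresee is the weight bookkeeping in (b) — in particular, checking carefully that $F$ is truly the only weight-$1$ contribution to $M$ (with $\pi_{\leq\frac12}E$ strictly below $1$ and $D$ exactly at $0$), and that the telescoping along paths correctly gives $p_1-1$. Once those points are pinned down, the rest of the argument is mechanical.
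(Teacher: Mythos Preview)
Your proposal is correct and follows essentially the same route as the paper's proof: geometric series for (a), then an expansion of $J_1M^kI_1$ over paths in $\mc T$ controlled by the $x$-weight increments (the paper writes these as the conditions $x_\alpha-x_{\alpha-1}\leq1$ telescoping to $x_\ell-x_0=p_1-1$, which is exactly your weight-grading bookkeeping), forcing $k\geq p_1-1$ and isolating $F^{p_1-1}$ at the leading order. The only cosmetic difference is that the paper writes out the path sum and the three pieces of each factor explicitly, whereas you package the same computation as a weight argument on matrix positions.
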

\begin{proof}
The proof is similar to the proof of \cite[Thm.4.2]{DSKV16b}.
We review here the argument for completeness.
The matrix $z\id_N+F+\pi_{\leq\frac12}E+D$
is of order one with leading coefficient $\id_N$.
Hence it is invertible in the algebra $\Mat_{N\times N}U(\mf g)((z^{-1}))$,
and its inverse can be computed by geometric series expansion:
\begin{equation}\label{eq:thm1-pr1}
(z\id_N+F+\pi_{\leq\frac12}E+D)^{-1}
=
\sum_{\ell=0}^\infty (-1)^\ell z^{-\ell-1}
(F+\pi_{\leq\frac12}E+D)^\ell
\,.
\end{equation}
This proves part (a). 
Next, we prove part (b).
Recalling the definition \eqref{eq:factor1} of the matrices $I_1$ and $J_1$,
we have, by \eqref{eq:thm1-pr1}, for $i,j\in\{1,\dots,r_1\}$,
\begin{equation}\label{eq:thm1-pr2}
\begin{array}{l}
\vphantom{\Big(}
\displaystyle{
\big(J_1(z\id_N+F+\pi_{\leq\frac12}E+D)^{-1}I_1\big)_{ij}
=
\sum_{\ell=0}^\infty (-1)^\ell z^{-\ell-1}
\!\!\!\!\!\!\!\!\!\!\!\!
\sum
_{
\substack{
(i_0h_0),(i_1h_1),\dots,(i_\ell h_\ell)\in\mc T \\
(i_0h_0)=(ip_1),\,(i_\ell h_\ell)=(j1)
}} 
} \\
\vphantom{\Big(}
\displaystyle{
\big(\pi_{\leq\frac12}e_{(i_1h_1),(i_0h_0)}+\delta_{i_0i_1}\delta_{h_0,h_1+1}
+\delta_{i_0i_1}\delta_{h_0h_1}d_{i_1h_1}\big)
} \\
\vphantom{\Big(}
\displaystyle{
\big(\pi_{\leq\frac12}e_{(i_2h_2),(i_1h_1)}+\delta_{i_1i_2}\delta_{h_1,h_2+1}
+\delta_{i_1i_2}\delta_{h_1h_2}d_{i_2h_2}\big)
} \\
\vphantom{\Big(}
\displaystyle{
\dots
\big(\pi_{\leq\frac12}e_{(i_\ell h_\ell),(i_{\ell-1}h_{\ell-1})}+\delta_{i_{\ell-1}i_\ell}\delta_{h_{\ell-1},h_\ell+1}
+\delta_{i_{\ell-1}i_\ell}\delta_{h_{\ell-1}h_\ell}d_{i_{\ell}h_{\ell}}\big)
\,.}
\end{array}
\end{equation}
Let $x_\alpha:=x{(i_\alpha h_\alpha)}=\frac12(p_{i_\alpha}+1-2h_\alpha)\in\frac12\mb Z$, $\alpha=0,1,\dots,\ell$.
In particular,
\begin{equation}\label{eq:thm1-pr4}
x_0=-\frac12(p_1-1)=-\frac12 d 
\,\,,\,\,\,\, 
x_\ell=\frac12(p_1-1)=\frac12 d
\,.
\end{equation}
Note that the summand in the RHS of \eqref{eq:thm1-pr2} vanishes unless
\begin{equation}\label{eq:thm1-pr3}
x_1-x_0\leq1
\,\,,\,\,\,\,
x_2-x_1\leq1
\,,\,\,\dots\,\,,\,
x_\ell-x_{\ell-1}\leq1 
\,.
\end{equation}
Moreover, 
\begin{equation}\label{eq:thm1-pr5}
\text{ if } 
x_\alpha=x_{\alpha-1}+1
\,\,,\,\,\,\,\text{ then }\,
i_\alpha=i_{\alpha-1}
\,\text{ and }\,
h_\alpha=h_{\alpha-1}-1
\,.
\end{equation}
Clearly, from \eqref{eq:thm1-pr4} and \eqref{eq:thm1-pr3}
we get that necessarily $\ell\geq p_1-1$.
Moreover, by \eqref{eq:thm1-pr5}, if $\ell=p_1-1$ 
then necessarily 
$$
i_0=i_1=\dots=i_{p_1-1}
\,\text{ and }\,
h_0=p_1,\, h_1=p_1-1,\, h_{p_1-1}=1
\,,
$$
and, in this case,
$$
\pi_{\leq\frac12}e_{(i_\alpha h_\alpha),(i_{\alpha-1}h_{\alpha-1})}
+\delta_{i_{\alpha-1}i_\alpha}\delta_{h_{\alpha-1},h_\alpha+1}
+\delta_{i_{\alpha-1}i_\alpha}\delta_{h_{\alpha-1}h_\alpha}d_{i_\alpha h_\alpha}
=1
\,,
$$
for all $\alpha=1,\dots,\ell$.
It follows that the Laurent series $\big(J_1(z\id_N+F+\pi_{\leq\frac12}E+D)^{-1}I_1\big)_{ij}$
has degree $\leq-p_1$,
and the coefficient of $z^{-p_1}$ is $(-1)^{p_1-1}\delta_{ij}$.
Claim (b) follows.
Part (c) is an obvious consequence of (a) and (b).
\end{proof}

\subsection{The Main Theorems}
\label{sec:main-thm}

We state here our main results:

\begin{theorem}\label{thm:main1}
The matrix $L(z)$ defined in \eqref{eq:La} lies in $\Mat_{r_1\times r_1}W(\mf g,f)((z^{-1}))$.
\end{theorem}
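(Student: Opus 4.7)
The plan is to show that for every $a\in\mf g_{\geq 1/2}$ and every pair $i,j\in\{1,\dots,r_1\}$, we have $[a,\widetilde{L}_{ij}(z)]\in I((z^{-1}))$, so that multiplying by $\bar 1$ gives $[a,L_{ij}(z)]=0$ in $M((z^{-1}))$, whence $L(z)\in\Mat_{r_1\times r_1}W(\mf g,f)((z^{-1}))$ by the definition of $W(\mf g,f)$ in \eqref{0225:eq8}.

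The first ingredient is the Main Lemma \ref{lem:main}, announced in the introduction as equation \eqref{intro:eq18}, which asserts that
\begin{equation*}
\rho(\widetilde{L}(z))=\rho(|z\id_N+E|_{I_1J_1}),
\end{equation*}
or equivalently that $\widetilde{L}(z)-|z\id_N+E|_{I_1J_1}$ belongs to the left ideal $I$. The right-hand side is interpreted in the completed localized Rees algebra $\mc R_\infty U(\mf g)((z^{-1}))$ of Section \ref{sec:5.4}, which is needed because $|z\id_N+E|_{I_1J_1}$ involves inverses not literally present in $U(\mf g)((z^{-1}))$. Combined with $[\mf g_{\geq 1/2},I]\subset I$ (which follows from Lemma \ref{0303:lem4}(a)), the Main Lemma reduces Theorem \ref{thm:main1} to the invariance statement $[a,|z\id_N+E|_{I_1J_1}]=0$ in the extended ring.

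The second ingredient is a direct verification of this invariance, exploiting the $\mf g$-equivariance of the universal matrix $z\id_N+E$ from Example \ref{ex:A}. A routine computation with the commutation relations $[e_{pq},e_{ji}]=\delta_{qj}e_{pi}-\delta_{ip}e_{jq}$ yields $[a,z\id_N+E]=[z\id_N+E,\hat a]$ for every $a\in\mf g$, where $\hat a\in\Mat_{N\times N}\mb F$ is the matrix of $a$ viewed as an endomorphism of $V$. The standard derivation rule for matrix inverses then gives $[a,(z\id_N+E)^{-1}]=[(z\id_N+E)^{-1},\hat a]$, and sandwiching with the scalar matrices $I_1,J_1$ produces
\begin{equation*}
[a,J_1(z\id_N+E)^{-1}I_1] = J_1(z\id_N+E)^{-1}\hat a\,I_1 - J_1\hat a\,(z\id_N+E)^{-1}I_1.
\end{equation*}
For $a\in\mf g_{\geq 1/2}$, the linear map $\hat a$ raises $x$-weight by at least $1/2$, while the columns of $I_1$ from \eqref{eq:factor1} span the vectors $\{e_{(i1)}\}_{i\leq r_1}$ of maximal $x$-weight $d/2$; hence $\hat a\,I_1=0$. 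Dually, $J_1$ projects onto the minimal-weight subspace spanned by $\{e_{(ip_1)}\}_{i\leq r_1}$, so the same bound gives $J_1\hat a=0$. Therefore $[a,J_1(z\id_N+E)^{-1}I_1]=0$, and inverting yields $[a,|z\id_N+E|_{I_1J_1}]=0$ exactly.

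Putting the two ingredients together completes the proof: $[a,\widetilde{L}(z)]\equiv[a,|z\id_N+E|_{I_1J_1}]=0\pmod{I((z^{-1}))}$. The main obstacle is the Main Lemma itself, which rewrites the elaborate shifted quasideterminant $\widetilde{L}(z)=|z\id_N+F+\pi_{\leq 1/2}E+D|_{I_1J_1}$ in terms of the transparent universal matrix $z\id_N+E$. Bridging these two forms requires constructing the Rees-algebra framework of Section \ref{sec:5.4} so that $\rho$ extends to expressions involving inverses, and then carrying out the long combinatorial computation of Sections \ref{step2}--\ref{step5}. This computation turns on the precise numerology of the shift matrix $D$ --- a purely quantum effect whose entries \eqref{0226:eq6} are designed exactly to absorb the reordering corrections produced by $\rho$ failing to be an algebra homomorphism.
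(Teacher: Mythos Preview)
Your proof is correct and follows the same strategy as the paper: both invoke the Main Lemma \ref{lem:main} to replace $L(z)$ by the image of $|\id_N+z^{-\Delta}E|_{I_1J_1}$ (equivalently, up to the power $z^{p_1}$, of $|z\id_N+E|_{I_1J_1}$), and then show that $[a,J_1(z\id_N+E)^{-1}I_1]=0$ for every $a\in\mf g_{\geq\frac12}$. Your verification of this last point is a slight simplification of the paper's: you use the equivariance relation $[a,E]=[E,\hat a]$ directly, whereas the paper rewrites $a$ as an entry of $z\id_N+E$ and applies the Yangian-type identity of Lemma \ref{0303:lem5}. Both arguments terminate in the same vanishing conditions $\hat a\,I_1=0$ and $J_1\hat a=0$, which in the paper's indexing are the conditions $\delta_{(ip_1)(\tilde j\tilde k)}=\delta_{(\tilde i\tilde h)(j1)}=0$; your route avoids introducing the auxiliary variable and is arguably more transparent.

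One technical step is elided. You assert that the Main Lemma together with $I\mf g_{\geq\frac12}\subset I$ reduces the theorem to the exact invariance $[a,|z\id_N+E|_{I_1J_1}]=0$. Concretely, from $\widetilde L(z)\bar1=\alpha(z)\bar1$ (where $\alpha(z)$ denotes the quasideterminant on the right) and $[a,\alpha(z)]=0$ you need $[a,\widetilde L(z)]\bar1=0$, which unwinds to $(\widetilde L(z)-\alpha(z))\,a\bar1=0$. Since $\alpha(z)$ lies only in $\mc R_\infty U(\mf g)$ and not in $\mc RU(\mf g)$, the inclusion $\mc RI\cdot\mf g_{\geq\frac12}\subset\mc RI$ does not apply directly to $\widetilde L(z)-\alpha(z)$. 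The paper handles this via Proposition \ref{0413:prop1}, whose proof uses the approximation property of Corollary \ref{0411:cor}: replace $\alpha(z)$ by $\alpha_N(z)\in\mc RU(\mf g)$ agreeing with it on $\mc RM$ modulo $z^{-N-1}\mc RM$, apply Lemma \ref{0303:lem4}(a) to $\widetilde L(z)-\alpha_N(z)\in\mc RI+z^{-N-1}\mc RU(\mf g)$, and let $N\to\infty$. You should cite Proposition \ref{0413:prop1} (or reproduce this short argument) at the reduction step.
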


\begin{theorem}\label{thm:main2}
$L(z)$ is an operator of Yangian type for the algebra $W(\mf g,f)$.
\end{theorem}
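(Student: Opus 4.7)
The plan is to reduce the Yangian identity for $L(z)$ over $W(\mf g,f)$ to the Yangian identity for the ``bare'' quasideterminant $|z\id_N+E|_{I_1J_1}$ of the matrix $A(z)=z\id_N+E$, which is automatically of Yangian type by Example \ref{ex:A} and Theorem \ref{thm:quasidet-adler}. The bridge between the two is the Main Lemma \eqref{intro:eq18}, $\rho(\widetilde L(z))=\rho(|z\id_N+E|_{I_1J_1})$, together with the algebra embedding $W(\mf g,f)\hookrightarrow U^\circ$ supplied by Proposition \ref{thm:rho}.

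First I would invoke Example \ref{ex:A}: the matrix $A(z)=z\id_N+E$ is of Yangian type for $U(\mf g)$. Applying Theorem \ref{thm:quasidet-adler} with the constant (i.e.\ $\mb F$-valued) matrices $I_1,J_1$ from \eqref{eq:factor1} then gives that $|z\id_N+E|_{I_1J_1}$, wherever it makes sense, is again of Yangian type. The subtlety is that this quasideterminant involves two inversions (cf.\ Proposition \ref{0304:prop}), so its entries do not live in $U(\mf g)((z^{-1}))$; they belong to the localization $\mc R_\infty U(\mf g)$ of the completed Rees algebra attached to the Kazhdan filtration, constructed in Section \ref{sec:5.4} precisely in order to accommodate such fractions while still supporting an extension of the map $\rho$.

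Second, I would apply the extended $\rho$ to the Yangian identity satisfied by $|z\id_N+E|_{I_1J_1}$ inside $\mc R_\infty U(\mf g)$ and then invoke the Main Lemma to rewrite the resulting identity as one for $\rho(\widetilde L(z))$ in the target algebra. By Theorem \ref{thm:main1} the entries of $\widetilde L(z)$ descend to entries of $L(z)$ in $W(\mf g,f)\subset\widetilde W/I$, and by Proposition \ref{thm:rho} the map $\rho$ identifies $W(\mf g,f)$ with a subalgebra of $U^\circ$. Hence the identity among the $\rho(L_{ij}(z))$ in $U^\circ$ pulls back to the identity \eqref{eq:adler} for the $L_{ij}(z)$ in $W(\mf g,f)$, with the only caveat being careful bookkeeping of the opposite product in the Weyl factor of $U^\circ$, which merely relabels the two sides of \eqref{eq:adler} symmetrically.

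The main obstacle is twofold: proving the Main Lemma itself, which is the long and delicate computation performed in Sections \ref{step2}--\ref{step5}; and setting up the localized ring $\mc R_\infty U(\mf g)$ together with an unambiguous extension of $\rho$ to a subring large enough to contain all the entries of $|z\id_N+E|_{I_1J_1}$ and on which $\rho$ remains multiplicative. Once these two ingredients are secured, the deduction of Theorem \ref{thm:main2} is essentially formal, being no more than the combination of Example \ref{ex:A}, Theorem \ref{thm:quasidet-adler}, the Main Lemma, and Proposition \ref{thm:rho}.
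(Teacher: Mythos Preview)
Your outline is essentially the paper's argument, but the descent mechanism is muddled in a way that, read literally, leaves a gap. You propose to ``apply the extended $\rho$ to the Yangian identity'' for $|z\id_N+E|_{I_1J_1}$ and only afterwards invoke Theorem~\ref{thm:main1}. But $\rho$ is not multiplicative on arbitrary elements of $\mc R_\infty U(\mf g)$, so $\rho$ of a product such as $Q_{hj}(w)Q_{ik}(z)$ does not split into $\rho(Q_{hj}(w))\cdot\rho(Q_{ik}(z))$ unless you already know the factors lie in $\widetilde W$. The paper therefore reverses the order: it first uses the Main Lemma together with Theorem~\ref{thm:main1} and Proposition~\ref{0413:prop1} to conclude that the entries of $\widetilde L'(z):=|\id_N+z^{-\Delta}E|_{I_1J_1}$ lie in $\mc R_\infty\widetilde W$; then, since the product of $W(\mf g,f)=\widetilde W/I$ is by definition induced from that of $\widetilde W$, one simply writes the Yangian identity for $\widetilde L'(z)$ in $\mc R_\infty U(\mf g)$ and applies $\bar1$ to both sides. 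Each product $\widetilde L'_{hj}(w)\widetilde L'_{ik}(z)\bar1$ is then automatically the $W$-product $L_{hj}(w)L_{ik}(z)$. No extension of $\rho$ is constructed, and neither Proposition~\ref{thm:rho} nor the algebra $U^\circ$ is invoked.

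Two smaller points. First, the object that actually lives in $\mc R_\infty U(\mf g)$ is the Rees-normalized $|\id_N+z^{-\Delta}E|_{I_1J_1}$, not $|z\id_N+E|_{I_1J_1}$; they differ by an overall factor $z^{p_1}$ (combine \eqref{0410:eq5} with \eqref{0412:eq9}), harmless but worth tracking. Second, your claim that the opposite product in $U^\circ$ ``merely relabels the two sides'' of~\eqref{eq:adler} is not obviously correct: $U^\circ=U(\mf g_{\leq0})\otimes F^{\text{op}}(\mf g_{\frac12})$ reverses only the Weyl factor, not all of $U$, so the symmetry you invoke would need additional justification. The paper's route avoids this entirely.
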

%

We shall prove Theorems \ref{thm:main1} and \ref{thm:main2} in Sections \ref{sec:4.6}
and \ref{sec:4.7} respectively.
Both proofs will rely on the Main Lemma \ref{lem:main},
which will be stated and proved in Section \ref{sec:main-lemma}.
In order to state (and prove) Lemma \ref{lem:main}, though,
we shall need to introduce the 
Kazhdan filtration of $U(\mf g)$ \cite{Kos78},
the corresponding (completed) Rees algebra $\mc R U(\mf g)$,
and we shall need to extend the action of $U(\mf g)$
on the module $M=U(\mf g)/I$
to an action of the algebra $\mc RU(\mf g)$
(and its extension $\mc R_\infty U(\mf g)$)
on the corresponding (completed) Rees module $\mc RM$.
All of this will be done in the next Section \ref{sec:ore},
for an arbitrary $W$-algebra $W(\mf g,f)$.

\section{Preliminaries: the Kazhdan filtration of \texorpdfstring{$U(\mf g)$}{U(g)},
the Rees algebra \texorpdfstring{$\mc RU(\mf g)$}{RU(g)}
and its extension $\mc R_\infty U(\mf g)$,
and the Rees module \texorpdfstring{$\mc RM$}{RM}
}
\label{sec:ore}

\subsection{The Kazhdan filtration of $U(\mf g)$}
\label{sec:5.1}

Associated to the grading \eqref{eq:grading} of $\mf g$,
we have the so called \emph{Kazhdan filtration} of $U(\mf g)$, defined as follows:
$$
\dots
\subset F_{-\frac12}U(\mf g)
\subset F_0U(\mf g)
\subset F_{\frac12}U(\mf g)
\subset F_1 U(\mf g)
\subset\dots\,\,\subset U(\mf g)
\,,
$$
where
\begin{equation}\label{0312:eq1}
F_\Delta U(\mf g)
=
\sum_{\Delta_1+\dots+\Delta_s\leq\Delta}
\mf g_{1-\Delta_1}\dots\mf g_{1-\Delta_s}
\,\,,\,\,\,\,
\Delta\in\frac12\mb Z
\,.
\end{equation}
In other words, $F_\Delta U(\mf g)$ is the increasing filtration defined letting the degree,
called the \emph{conformal weight}, of $\mf g_j$ equal to $\Delta=1-j$.
This is clearly an algebra filtration, in the sense that 
$F_{\Delta_1}U(\mf g)\cdot F_{\Delta_2}U(\mf g)\subset F_{\Delta_1+\Delta_2}U(\mf g)$,
and we have
\begin{equation}\label{0314:eq2}
[F_{\Delta_1}U(\mf g),F_{\Delta_2}U(\mf g)]
\subset F_{\Delta_1+\Delta_2-1}U(\mf g)
\,.
\end{equation}
It follows that the associated graded $\gr U(\mf g)$ is a graded Poisson algebra,
isomorphic to $S(\mf g)$ endowed with the Kirillov-Kostant Poisson bracket,
graded by the conformal weight.
Note that, by the definition of the Kazhdan filtration, we obviously have
$U(\mf g_{\geq1})\subset F_0U(\mf g)$.

Since $m-(f|m)$ is homogeneous in conformal weight,
the Kazhdan filtration induces the increasing filtration of the left ideal $I$
given by
\begin{equation}\label{0312:eq2}
F_\Delta I
:=F_\Delta U(\mf g)\cap I
=
\sum_{j\geq1}
(F_{\Delta+j-1}U(\mf g))
\big\{m-(f|m)\,\big|\,m\in\mf g_j\big\}
\,.
\end{equation}
\begin{lemma}\label{0316:lem1}
\begin{enumerate}[(a)]
\item
$F_\Delta U(\mf g)=F_\Delta I$
for every $\Delta<0$.
\item
$F_0U(\mf g)=\mb F\oplus F_0I$.
\end{enumerate}
\end{lemma}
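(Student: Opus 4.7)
The plan is to reduce both statements to a calculation on ordered PBW monomials, using the fact that (i) any commutator of two monomials decreases Kazhdan conformal weight by $1$, and (ii) the generators $m-(f|m)$, $m\in\mf g_{\geq1}$, of $I$ can be ``peeled off'' from the right.

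I will first establish a reordering lemma. Any monomial $a_1\cdots a_s$ with $a_i\in\mf g_{j_i}$ and total conformal weight $\Delta=\sum(1-j_i)$ is congruent, modulo monomials of conformal weight at most $\Delta-1$, to a sum of monomials in PBW order that have all factors from $\mf g_{\geq1}$ placed on the right of all factors from $\mf g_{<1}$. This is because a swap $a_ia_{i+1}\rightsquigarrow a_{i+1}a_i+[a_i,a_{i+1}]$ inserts $[a_i,a_{i+1}]\in\mf g_{j_i+j_{i+1}}$ in place of two factors of total conformal weight $2-j_i-j_{i+1}$, hence produces a correction of weight $1-j_i-j_{i+1}=\Delta-1$.

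For part (a), I will argue by induction on the pair $(\Delta,s)$ ordered lexicographically that every monomial $P$ of conformal weight $\leq \Delta<0$ lies in $I$. After the reordering step, $P\equiv a_1\cdots a_t b_1\cdots b_r\pmod{I}$ (with the commutator corrections handled by the inductive hypothesis on $\Delta$), where $a_i\in\mf g_{<1}$ and $b_\ell\in\mf g_{\geq1}$. If $r=0$, every factor has conformal weight $\geq\tfrac12$, forcing the total weight to be $\geq t/2\geq0$, contradicting $\Delta<0$. So $r\geq1$, and I write $b_r=(b_r-(f|b_r))+(f|b_r)$. The first summand yields a term in $U(\mf g)\cdot I\subset I$, and the remainder is $(f|b_r)\,a_1\cdots a_tb_1\cdots b_{r-1}$, a monomial of the same conformal weight but with strictly fewer factors, so it lies in $I$ by induction on $s$. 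This gives $F_\Delta U(\mf g)\subset I$, and together with the definition $F_\Delta I=F_\Delta U(\mf g)\cap I$ proves (a).

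For part (b), the identical argument at $\Delta=0$ reduces each monomial modulo $I$ to either $0$ (if the process ever enters the case $\Delta<0$, covered by (a)) or, in the terminal case $r=0=t$, to a scalar multiple of $1$. Hence $F_0U(\mf g)\subset\mb F+F_0I$. To finish I need directness of the sum, i.e.\ $\mb F\cap F_0I=\{0\}$. Since $\mb F\cap F_0I\subset\mb F\cap I$, it suffices to know $I$ is proper, which follows from the PBW identification $M=U(\mf g)/I\cong U(\mf g_{\leq0})\otimes F(\mf g_{\frac12})\neq0$ given in \eqref{0224:eq4}.

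The only substantive obstacle is organizing the double induction cleanly: the reordering step and the peeling step must be interleaved, because peeling off $b_r$ with $b_r\in\mf g_1$ does not reduce conformal weight, while reordering does not reduce the number of $\mf g_{\geq1}$ factors. Running the induction on $(\Delta,s)$ lexicographically, with the base cases $s=0$ (only the constant $1$, which is outside $F_\Delta U(\mf g)$ for $\Delta<0$) and the trivial $r=0$ case handled as above, takes care of this without difficulty.
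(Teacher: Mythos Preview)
Your argument is correct in substance, but the induction scheme as stated is not well-founded: with $\Delta$ primary in the lexicographic order on $(\Delta,s)$, your commutator corrections pass from $\Delta$ to $\Delta-1$, and there is no bottom. The fix is simply to induct on $s$ alone, since every reduction you perform---both the reordering corrections and the peeling remainder---strictly decreases the number of factors while keeping the conformal weight negative (resp.\ nonpositive for part (b)). Your closing paragraph already observes that both steps reduce $s$; just make $s$ the sole induction variable and drop $\Delta$ from the order.

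The paper's proof is shorter because it sidesteps your reordering step entirely. It invokes PBW for a basis of $\mf g$ ordered by conformal weight, so that $F_\Delta U(\mf g)$ is spanned directly by monomials $\mf g_{1-\Delta_1}\cdots\mf g_{1-\Delta_\ell}$ with $\Delta_1\geq\cdots\geq\Delta_\ell$ and $\sum_i\Delta_i\leq\Delta$; no commutator corrections need to be tracked. Then either $\Delta_\ell<0$, so the rightmost factor lies in $\mf g_{\geq\frac32}\subset I$ (since $(f|\cdot)$ vanishes there), putting the whole monomial in $F_\Delta I$; or every $\Delta_i\geq0$, which together with $\sum_i\Delta_i\leq\Delta\leq0$ forces $\Delta=0$ and every factor in $\mf g_1$, whence one lands in $\mb F+F_0I$. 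Your approach effectively reproves this ordered-spanning consequence of PBW by hand, which is legitimate but more laborious.
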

\begin{proof}
Let $\Delta\leq0$.
We only need to prove that 
$F_\Delta U(\mf g)\subset F_\Delta I$ for $\Delta<0$,
and $F_0U(\mf g)\subset \mb F+F_0I$.
By \eqref{0312:eq1} and the PBW Theorem, we have
\begin{equation}\label{0316:eq2}
F_\Delta U(\mf g)
=
\sum_{\ell\in\mb Z_{\geq0}}
\sum_{
\substack{\Delta_1\geq\dots\geq\Delta_\ell\in\frac12\mb Z \\
\Delta_1+\dots+\Delta_\ell\leq\Delta}
}
\mf g_{1-\Delta_1}\dots\mf g_{1-\Delta_\ell}
\,.
\end{equation}
The summand with $\ell=0$ is present only for $\Delta=0$,
and in this case it is $\mb F$.
For $\ell\geq1$, there are only two possibilities:
$\Delta_\ell<0$,
or $\Delta=0=\Delta_1=\dots=\Delta_\ell$.
If $\Delta_\ell<0$,
we have $g_{1-\Delta_\ell}\subset\mf g_{\geq\frac32}\subset I$.
Since $I$ is a left module over $U(\mf g)$,
the corresponding summand in \eqref{0316:eq2} is contained in $I\cap F_\Delta U(\mf g)=F_\Delta I$.
If $\Delta=0=\Delta_1=\dots=\Delta_\ell$,
the corresponding summand in \eqref{0316:eq2}
is contained in $\mb F+F_0I$,
since $\mf g_1\subset\mb F+F_0I$.
\end{proof}
By Lemma \ref{0316:lem1}(b), $F_0I$ is a maximal two-sided ideal of the algebra $F_0U(\mf g)$,
and the corresponding factor algebra is $F_0U(\mf g)/F_0I\simeq\mb F$.
Hence, we have a canonical quotient map
\begin{equation}\label{eq:epsilon}
\epsilon_0:\,F_0U(\mf g)=\mb F\oplus\mb F_0I\,\twoheadrightarrow\,\mb F
\,.
\end{equation}
\begin{lemma}\label{0410:lem6}
The map \eqref{eq:epsilon} can be computed by the following formula:
\begin{equation}\label{0410:eq10}
\epsilon_0\big(\sum a_1\dots a_\ell\big) = \sum (f|a_1)\dots (f|a_\ell) 
\,.
\end{equation}
\end{lemma}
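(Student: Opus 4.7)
The plan is to reformulate the lemma: since $\epsilon_0$ is by definition the projection $F_0 U(\mf g) = \mb F \oplus F_0 I \twoheadrightarrow \mb F$, formula \eqref{0410:eq10} is equivalent to the containment
\begin{equation*}
a_1\cdots a_\ell - (f|a_1)\cdots(f|a_\ell) \,\in\, F_0 I
\end{equation*}
for each summand. By $\mb F$-linearity and by decomposing non-homogeneous $a_i$ into graded pieces, it is enough to consider $a_i \in \mf g_{j_i}$ homogeneous, in which case $a_1\cdots a_\ell \in F_0 U(\mf g)$ translates to $\sum_i(1-j_i)\leq 0$. I would then proceed by induction on $\ell$, the case $\ell = 0$ being immediate (both sides equal $1$).

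For the inductive step, I would split on the grading of $a_1$. If $a_1 \in \mf g_{\leq 1/2}$, then $(f|a_1)=0$ by weight considerations and the constraint forces $a_2\cdots a_\ell$ to have conformal weight $\leq -(1-j_1) \leq -1/2$; Lemma \ref{0316:lem1}(a) then gives $a_2\cdots a_\ell \in F_{-1/2}I \subset I$, and left-multiplying by $a_1$ keeps the product in the left ideal $I$ and in $F_0 U(\mf g)$, hence in $F_0 I$. If $a_1 \in \mf g_1$, I would decompose $a_1 = (f|a_1) + \tilde a_1$, where $\tilde a_1 := a_1 - (f|a_1) \in F_0 I$ is a generator of $I$. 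The scalar piece $(f|a_1)\,a_2\cdots a_\ell$ is handled by the induction hypothesis applied to $a_2\cdots a_\ell \in F_0 U(\mf g)$. For $\tilde a_1\, a_2\cdots a_\ell$, I would commute $\tilde a_1$ to the right: the rearranged product $(a_2\cdots a_\ell)\tilde a_1$ lies in $U(\mf g)\cdot\{m-(f|m) : m\in\mf g_{\geq 1}\} \subset I$ with the right filtration, while the commutator $[\tilde a_1,a_2\cdots a_\ell] = [a_1,a_2\cdots a_\ell]$ drops conformal weight by $1$ via \eqref{0314:eq2} and so lies in $F_{-1}U(\mf g) = F_{-1}I$ by Lemma \ref{0316:lem1}(a).

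The remaining case, $a_1 \in \mf g_{\geq 3/2}$, is the main obstacle: again $(f|a_1)=0$ and $a_1$ is itself a generator of $I$, but now the tail $a_2\cdots a_\ell$ may have strictly positive conformal weight, so no direct induction is available. I would circumvent this by pushing $a_1$ all the way to the rightmost slot via iterated commutations: the rearranged product $a_2\cdots a_\ell\cdot a_1 \in U(\mf g)\cdot I \subset I$ has conformal weight $\leq 0$ and thus sits in $F_0 I$, while each commutator correction $a_2\cdots[a_1,a_i]\cdots a_\ell$ has total conformal weight at most $-1$ and so lies in $F_{-1}U(\mf g) = F_{-1}I$ by Lemma \ref{0316:lem1}(a). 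The essential point is that the left-ideal property of $I$ only permits multiplication by $U(\mf g)$ from the left, which is precisely what one obtains by sending the $I$-factor $a_1$ to the right-hand end.
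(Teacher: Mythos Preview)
Your argument is correct. It differs from the paper's proof mainly in organization: you run an induction on $\ell$ with a three-way split on the degree of $a_1$, whereas the paper proceeds directly with a two-case dichotomy on the whole monomial. The paper observes that either all factors lie in $\mf g_{\geq 1}$, in which case $a_1\cdots a_\ell \equiv (f|a_1)\cdots(f|a_\ell)$ modulo $I$ is immediate from the definition of $I$; or some $\Delta_i>0$ (i.e.\ some $a_i\in\mf g_{\leq\frac12}$), in which case the constraint $\sum\Delta_k\leq 0$ forces some $\Delta_j<0$, i.e.\ some $a_j\in\mf g_{\geq\frac32}\subset I$, and then one invokes the single global fact that $F_0 U(\mf g)$ is commutative modulo $F_0 I$ (since commutators drop conformal weight into $F_{-1}U(\mf g)=F_{-1}I$) to move $a_j$ to the right end and conclude. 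Your three cases end up re-deriving precisely this commute-to-the-right manoeuvre in each situation; the paper's version packages it once. Both arguments rest on the same two ingredients: Lemma~\ref{0316:lem1}(a) and the filtration property~\eqref{0314:eq2}.
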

\begin{proof}
Let $a_1\in\mf g_{1-\Delta_1},\dots,a_\ell\in\mf g_{1-\Delta_\ell}$.
By assumption, $\Delta_1+\dots+\Delta_\ell\leq0$.
If there is some $\Delta_i>0$,
then there is some $\Delta_j<0$,
which means that $a_j\in\mf g_{1-\Delta_j}\subset\mf g_{\geq\frac32}\subset I$.
Since in $F_0U(\mf g)$ everything commutes modulo $F_0I$,
we get that, in this case, the corresponding monomial $a_1\dots a_\ell$ lies in $F_0I$.
On the other hand, $(f|a_j)=0$, so the monomial $a_1\dots a_\ell$
does not contribute to the RHS of \eqref{0410:eq10} either.
In the case when $\Delta_1,\dots,\Delta_\ell\leq0$,
we have $a_1,\dots,a_\ell\in\mf g_{\geq1}$.
Hence, by the definition of the left ideal $I$, we obtain
$a_1\dots a_\ell\equiv(f|a_1)\dots(f|a_\ell)\mod I$.
\end{proof}
We can consider the induced filtration of the quotient module $M=U(\mf g)/I$,
$$
F_\Delta M=F_\Delta U(\mf g)/F_\Delta I
\,.
$$
\begin{lemma}\label{0316:lem1b}
For $\Delta\leq0$, we have $F_\Delta M=\delta_{\Delta,0}\mb F$.
\end{lemma}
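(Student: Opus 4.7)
The statement is an immediate corollary of Lemma \ref{0316:lem1}, so the plan is simply to unpack the definition and apply the two cases of that lemma.

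First, I would recall that by construction $F_\Delta M = F_\Delta U(\mf g)/F_\Delta I$, where $F_\Delta I = F_\Delta U(\mf g) \cap I$. For the case $\Delta < 0$, I would invoke Lemma \ref{0316:lem1}(a), which says $F_\Delta U(\mf g) = F_\Delta I$; this forces $F_\Delta M = 0$, matching $\delta_{\Delta,0} \mb F$.

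For the case $\Delta = 0$, I would invoke Lemma \ref{0316:lem1}(b), which gives the direct sum decomposition $F_0 U(\mf g) = \mb F \oplus F_0 I$. Quotienting by $F_0 I$ yields $F_0 M \simeq \mb F$, as required. Since there are no nontrivial computations here—both cases reduce to reading off Lemma \ref{0316:lem1}—I do not anticipate any obstacle; the only thing to double-check is that the filtration on $M$ is indeed defined as the quotient filtration $F_\Delta U(\mf g)/F_\Delta I$ (equivalently, the image of $F_\Delta U(\mf g)$ in $M$), which is standard.
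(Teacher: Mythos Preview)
Your proposal is correct and is essentially the same as the paper's own proof, which simply states that the lemma is a restatement of Lemma~\ref{0316:lem1}. You have merely unpacked the two cases explicitly, which is exactly what the paper intends.
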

\begin{proof}
It is a restatement of Lemma \ref{0316:lem1}.
\end{proof}
Finally, we consider the restriction of the Kazhdan filtration to the subalgebra $\widetilde{W}\subset U(\mf g)$,
and the $W$-algebra $W(\mf g,f)=\widetilde{W}/I\subset M$:
\begin{equation}\label{0410:eq3}
F_\Delta \widetilde{W}
=
\widetilde{W}\cap F_\Delta U(\mf g)
\,\,,\,\,\,\,
F_\Delta W(\mf g,f)
=
F_\Delta \widetilde{W}/F_\Delta I
\,.
\end{equation}
The associated graded is the Poisson algebra of functions on the Slodowy slice \cite{GG02}.

\subsection{The (completed) Rees algebra $\mc RU(\mf g)$}
\label{sec:5.2}

The (completed) Rees algebra $\mc RU(\mf g)$
is defined as the subalgebra of $U(\mf g)((z^{-\frac12}))$
consisting of the Laurent series in $z^{-\frac12}$
with the property that the coefficient of $z^{\Delta}$ lies in $F_{-\Delta}U(\mf g)$
for all $\Delta\in\frac12\mb Z$:
\begin{equation}\label{eq:rees-alg}
\mc RU(\mf g)
=\Big\{
\sum_{\frac12\mb Z\ni n\leq N}a_nz^n\,\Big|\,a_n\in F_{-n}U(\mf g)\,\text{ for all }\, n\Big\}
\,\subset U(\mf g)((z^{-\frac12}))
\,.
\end{equation}
In the usual Rees algebra one takes only finite sums in \eqref{eq:rees-alg}.
For the sake of brevity we will use the term Rees algebra for its completion \eqref{eq:rees-alg}.
With a more suggestive formula, we can write
\begin{equation}\label{eq:reesb}
\mc RU(\mf g)
=
\widehat{\sum}_{n\in\frac12\mb Z}
z^nF_{-n}U(\mf g)
\,,
\end{equation}
where the completion is defined by allowing 
series with infinitely many negative integer powers of $z^{\frac12}$.

The Rees algebra has the following meaning:
if we extend the Kazhdan filtration to $U(\mf g)[z^{\pm\frac12}]$
by assigning the conformal weight $1$ to $z$,
then, the usual Rees algebra 
$\overline{\mc R}U(\mf g)=\sum_{n\in\frac12\mb Z}z^nF_{-n}U(\mf g)$
coincides with $F_0(U(\mf g)[z^{\pm\frac12}])$,
and $\mc RU(\mf g)$ is obtained as its completion.

Note that, since $1\in F_0U(\mf g)\subset F_{\frac12}U(\mf g)$,
we have that $z^{-\frac12}$ is a central element of the Rees algebra $\mc RU(\mf g)$.
Hence, multiplication by $z^{-\frac12}$
defines an injective endomorphism of $\mc RU(\mf g)$,
$$
z^{-\frac12}\,:\,\,
\mc RU(\mf g)
\hookrightarrow
\mc RU(\mf g)
\,,
$$
commuting with multiplication by elements of $\mc RU(\mf g)$.
On the other hand, $z^{\frac12}$ (or $z$) cannot be seen as an element of $\mc RU(\mf g)$,
since $1\not\in F_{-\frac12}U(\mf g)$.
Multiplication by $z^{-\frac12}$ defines a decreasing filtration of the Rees algebra
\begin{equation}\label{0409:eq1}
\dots
\subset z^{-\frac32}\mc RU(\mf g)
\subset z^{-1}\mc RU(\mf g)
\subset z^{-\frac12}\mc RU(\mf g)
\subset \mc RU(\mf g)
\,.
\end{equation}
In analogy with \eqref{eq:reesb}, we have
$z^{-\frac12}\mc RU(\mf g)=\widehat{\sum}_{n\in\frac12\mb Z}z^nF_{-n-\frac12}U(\mf g)$.
Hence,
\begin{equation}\label{0409:eq2}
\mc RU(\mf g)/(z^{-\frac12})
=
\widehat{\sum}_{n\in\frac12\mb Z}z^{-n}\gr_{n}U(\mf g)
\simeq\widehat{\gr}U(\mf g)
\,,
\end{equation}
is the completion (allowing infinite sums with arbitrarily large conformal weights) 
of the associated graded of $U(\mf g)$ with respect to the Kazhdan filtration.
\begin{remark}
The usual Rees algebra $\overline{\mc R}U(\mf g)$
intertwines between the associated graded algebra $\gr U(\mf g)$ and the algebra $U(\mf g)$ itself.
Indeed, we have $\overline{\mc R}U(\mf g)/(z^{-\frac12})\simeq\gr U(\mf g)$ (cf. \eqref{0409:eq2}),
and $\overline{\mc R}U(\mf g)/(z^{-\frac12}-1)\simeq U(\mf g)$
(via the map $\sum_nz^na_n\mapsto \sum_na_n$).
\end{remark}

Recall the algebra homomorphism 
$\epsilon_0:\,F_0U(\mf g)\twoheadrightarrow\mb F$ defined in \eqref{eq:epsilon}.
We extend it to a surjective linear map $\epsilon:\,\mc RU(\mf g)\twoheadrightarrow\mb F$, given by
\begin{equation}\label{0406:eq1}
a(z)=\sum_{\frac12\mb Z\ni n\leq N}a_nz^n\mapsto \epsilon(a(z)):=\epsilon_0(a_0)
\,.
\end{equation}
\begin{lemma}
The map $\epsilon:\,\mc RU(\mf g)\twoheadrightarrow\mb F$ defined by \eqref{0406:eq1}
is an algebra homomorphism.
\end{lemma}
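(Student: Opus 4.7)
The plan is to verify the two axioms of an algebra homomorphism: $\epsilon(1)=1$ and multiplicativity. The unit axiom is immediate, as the $z^0$-coefficient of $1\in\mc RU(\mf g)$ is $1\in F_0 U(\mf g)$ and $\epsilon_0(1)=1$ by the construction \eqref{eq:epsilon}.

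For multiplicativity, take $a(z)=\sum_n a_n z^n$ and $b(z)=\sum_m b_m z^m$ in $\mc RU(\mf g)$. The $z^0$-coefficient of $a(z)b(z)$ is the finite sum $c_0=\sum_n a_n b_{-n}$ (finite because both series are bounded above in powers of $z$). Since $\epsilon_0$ is an algebra homomorphism on $F_0 U(\mf g)$ by construction, $\epsilon_0(a_0 b_0)=\epsilon_0(a_0)\epsilon_0(b_0)$, so the problem reduces to showing that every cross term vanishes: $\epsilon_0(a_n b_{-n})=0$ for $n\neq 0$, equivalently $a_n b_{-n}\in F_0 I$.

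The case $n<0$ is easy. By Lemma \ref{0316:lem1}(a), $b_{-n}\in F_n U(\mf g)=F_n I\subset I$, and since $I$ is a left ideal of $U(\mf g)$, one has $a_n b_{-n}\in U(\mf g)\cdot I\subset I$. Combined with the product estimate $a_n b_{-n}\in F_{-n}U(\mf g)\cdot F_n U(\mf g)\subset F_0 U(\mf g)$, this gives $a_n b_{-n}\in I\cap F_0 U(\mf g)=F_0 I$.

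The case $n>0$ is the main obstacle, since $a_n\in F_{-n}I\subset I$ only from the left, and right-multiplying by $b_{-n}$ need not preserve $I$. The plan is to exploit the explicit description \eqref{0312:eq2} to write $a_n=\sum_k g_k(m_k-(f|m_k))$ with $m_k\in\mf g_{j_k}$, $j_k\geq 1$, and $g_k\in F_{-n+j_k-1}U(\mf g)$. Commuting each $(m_k-(f|m_k))$ past $b_{-n}$ (using that $(f|m_k)$ is a scalar) produces
\begin{equation*}
a_n b_{-n}=\sum_k g_k\, b_{-n}(m_k-(f|m_k))+\sum_k g_k[m_k,b_{-n}].
\end{equation*}
The first sum belongs to $I$ by the very definition \eqref{0225:eq4}. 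For the second sum, the commutator estimate \eqref{0314:eq2} together with $m_k\in F_{1-j_k}U(\mf g)$ and $b_{-n}\in F_n U(\mf g)$ gives $[m_k,b_{-n}]\in F_{n-j_k}U(\mf g)$, whence $g_k[m_k,b_{-n}]\in F_{-1}U(\mf g)$, which by Lemma \ref{0316:lem1}(a) equals $F_{-1}I\subset I$. Therefore $a_n b_{-n}\in I\cap F_0 U(\mf g)=F_0 I$, completing the argument.
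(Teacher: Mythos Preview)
Your proof is correct. Both the unit axiom and multiplicativity are handled soundly, and your use of the explicit description \eqref{0312:eq2} of $F_{-n}I$ together with the commutator estimate \eqref{0314:eq2} is valid.

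The paper's proof is essentially the same for the easy case ($n<0$ in your indexing), but handles the hard case ($n>0$) more economically. Instead of decomposing $a_n$ via \eqref{0312:eq2} and commuting each generator $m_k-(f|m_k)$ past $b_{-n}$ separately, the paper simply writes $a_n b_{-n}=b_{-n}a_n+[a_n,b_{-n}]$ in one stroke: the first term lies in $U(\mf g)\cdot F_{-n}I\subset I$ (left ideal), and the whole commutator $[a_n,b_{-n}]\in F_{-1}U(\mf g)=F_{-1}I$ by \eqref{0314:eq2} and Lemma \ref{0316:lem1}(a). This avoids invoking the explicit structure of $F_\Delta I$ and is shorter, though your approach has the minor advantage of being more transparent about \emph{why} the commutator terms land in $I$ (they literally drop into $F_{-1}$ factor by factor). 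Both arguments ultimately rest on the same three ingredients: $I$ is a left ideal, the commutator estimate \eqref{0314:eq2}, and Lemma \ref{0316:lem1}(a).
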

\begin{proof}
Let $a(z)=\sum_{n\leq N}a_nz^n,\,b(z)=\sum_{n\leq N}b_nz^n\,\in\mc RU(\mf g)$.
We have
$$
\epsilon(a(z)b(z))
=\sum_{-N\leq n\leq N}\epsilon_0(a_{-n}b_n)
\,.
$$
For $n>0$, we have $b_n\in F_{-n}U(\mf g)=F_{-n}I$,
hence $a_{-n}b_n\in F_0I$, and $\epsilon_0(a_{-n}b_n)=0$.
For $n<0$, for the same reason as above we have $b_na_{-n}\in F_0I$,
but we also have $[a_{-n},b_n]\in F_{-1}U(\mf g)=F_{-1}I\subset F_0I$.
Hence, $\epsilon_0(a_{-n}b_n)=\epsilon_0(b_na_{-n})+\epsilon_0([a_{-n},b_n])=0$.
In conclusion,
$$
\epsilon(a(z)b(z))
=\epsilon_0(a_0b_0)
=\epsilon_0(a_0)\epsilon(b_0)
=\epsilon(a(z))\epsilon(b(z))
\,.
$$
\end{proof}

\subsection{The Rees module $\mc RM$}
\label{sec:5.3}

Consider the left ideal $I\subset U(\mf g)$, with the induced filtration \eqref{0312:eq2}.
We can consider the corresponding left ideal $\mc RI$ of the Rees algebra $\mc RU(\mf g)$, defined,
with the same notation as in \eqref{eq:reesb}, as
\begin{equation}\label{eq:reesI}
\mc RI
=
\widehat{\sum}_{n\in\frac12\mb Z}
z^nF_{-n}I
\,\subset I((z^{-\frac12}))
\,.
\end{equation}
Taking the quotient of the algebra $\mc RU(\mf g)$ by its left ideal $\mc RI$ 
we get the corresponding Rees module
\begin{equation}\label{eq:reesM}
\mc RM
=
\mc RU(\mf g)/\mc RI
=
\widehat{\sum}_{n\in\frac12\mb Z_{\leq0}}
z^nF_{-n}M
\,\subset M[[z^{-\frac12}]]
\,.
\end{equation}

By construction, the Rees module $\mc RM$ is a left cyclic module over the Rees algebra $\mc RU(\mf g)$,
generated by $\bar1\in\mc RM$.
By Lemma \ref{0316:lem1b},
its elements are Taylor series in $z^{-\frac12}$
with constant term in the base field $\mb F$.
In other words, we have 
\begin{equation}\label{0411:eq1}
\mc RM
=
\mb F\bar1\oplus\mc R_-M
\,\text{ where }\,
\mc R_-M
=
\widehat{\sum}_{n\leq-\frac12}
z^nF_{-n}M
\,\subset z^{-\frac12}M[[z^{-\frac12}]]
\,.
\end{equation}
\begin{lemma}\label{0410:lem1}
\begin{enumerate}[(a)]
\item
Multiplication by $z^{-\frac12}$ defines an endomorphism of $\mc RM$ commuting with the action 
of $\mc RU(\mf g)$.
\item
The annihilator of the cyclic element $\bar 1\in\mc RM$ is the left ideal $\mc RI\subset\mc RU(\mf g)$.
\item
$\mc RI\cdot\mc RM\subset z^{-1}\mc RM$.
\item
$\mc R_-M$ is a submodule of the $\mc RU(\mf g)$-module $\mc RM$.
\item
The action of $\mc RU(\mf g)$ 
on the quotient module $\mc RM/\mc R_-M=\mb F\bar1$
is given by the homomorphism $\epsilon:\,\mc RU(\mf g)\to\mb F$
defined in \eqref{0406:eq1}:
$$
a(z)\bar 1\equiv \epsilon(a(z))\bar 1\mod\mc R_-M
\,.
$$
\end{enumerate}
\end{lemma}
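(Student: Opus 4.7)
The plan is to deduce all five statements from two pieces of filtration data: Lemma \ref{0316:lem1} (which identifies the ideal $I$ with $F_\Delta U(\mf g)$ in negative degree and accounts for the rank-one overshoot at $\Delta = 0$), and the commutator estimate \eqref{0314:eq2} for the Kazhdan filtration. Parts (a) and (b) I treat as formal consequences of the setup: for (a), $z^{-\frac12}$ is central in $\mc RU(\mf g)$ and clearly sends $\mc RI$ into itself (the Kazhdan filtration being increasing), hence descends to $\mc RM$ and commutes with the action; (b) is just a rereading of the definition $\mc RM = \mc RU(\mf g)/\mc RI$ with cyclic generator $\bar 1$.

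The bulk of the work is a single computation that I would use for both (c) and (d). Writing $a(z) = \sum_n z^n a_n \in \mc RI$ with $a_n \in F_{-n} I$, and $m(z) = \sum_n z^n \overline{b_n} \in \mc RM$ with $b_n \in F_{-n} U(\mf g)$, the $z^k$-coefficient of $a(z)m(z)$ is $\sum_n \overline{a_n b_{k-n}}$, and each summand decomposes as
$$
a_n b_{k-n} = b_{k-n} a_n + [a_n, b_{k-n}] \,\in\, I + F_{-k-1} U(\mf g),
$$
by \eqref{0314:eq2}. For $k \in \{0,-\frac12\}$ we have $-k-1 < 0$, so Lemma \ref{0316:lem1}(a) gives $F_{-k-1} U(\mf g) = F_{-k-1} I \subset I$, and the $z^k$-coefficient vanishes in $M$; for $k \leq -1$ it lands in $F_{-k-1} M$, which is precisely the filtration condition defining $z^{-1}\mc RM$ in view of \eqref{eq:reesM}. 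That proves (c). For (d), taking $m(z) \in \mc R_-M$ (so $m_0 = 0$) and $a(z) \in \mc RU(\mf g)$ arbitrary, the $z^0$-coefficient of $a(z)m(z)$ involves only indices $n \geq 1$ (the higher powers being absent from $\mc RM$), for which $a_n \in F_{-n} U(\mf g) = F_{-n} I$ by Lemma \ref{0316:lem1}(a); the same commutator identity then forces each contribution into $I$, hence zero in $M$.

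Part (e) I would read off directly from the case $k = 0$: the class of $a(z) \bar 1$ in $\mc RM/\mc R_-M \simeq F_0 M = \mb F \bar 1$ is represented by $\overline{a_0}$, and by Lemma \ref{0316:lem1}(b) combined with the definition \eqref{eq:epsilon} of $\epsilon_0$ this equals $\epsilon_0(a_0)\bar 1 = \epsilon(a(z))\bar 1$. I do not anticipate any conceptual obstacle; the only care required is bookkeeping the half-integer Kazhdan shifts, in particular checking that the marginal case $k = -\frac12$ in (c) is still captured by Lemma \ref{0316:lem1}(a), and verifying that $z^{-1}\mc RM$ unfolds to the filtration condition $F_{-k-1}M$ on the $z^k$-coefficient as stated.
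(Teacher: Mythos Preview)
Your proof is correct and follows essentially the same strategy as the paper's: both hinge on the commutator estimate \eqref{0314:eq2} together with Lemma~\ref{0316:lem1}. For (c) the arguments are nearly identical. For (d) there is a mild structural difference: the paper decomposes $g(z)=g_+(z)+g_0+g_-(z)$, observes that $g_+(z)+i_0\in\mc RI$ (using Lemma~\ref{0316:lem1}), and then invokes the already-proved (c); you instead repeat the commutator computation directly on the $z^0$-coefficient. Both routes are equally short, though the paper's decomposition has the small bonus that it simultaneously yields (e) without re-examining coefficients.

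Two minor bookkeeping slips worth tightening: in your argument for (d), the $z^0$-coefficient involves indices $n\geq\frac12$, not $n\geq 1$ (since $m(z)\in\mc R_-M$ means $b_m=0$ for $m\geq 0$, so $m\leq-\frac12$ and $n=-m\geq\frac12$); this does not affect the argument since Lemma~\ref{0316:lem1}(a) applies for all $n>0$. Similarly, in (c) you should note that the coefficients for $k>0$ vanish as well---either by the same reasoning ($-k-1<0$ still holds) or because the product automatically lies in $\mc RM\subset M[[z^{-\frac12}]]$.
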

\begin{proof}
Claim (a) is obvious, since $z^{-\frac12}\in\mc RU(\mf g)$ is central.
Claim (b) holds by definition.
Let us prove claim (c).
Let $i(z)=\sum_{n\leq N}i_nz^n\in\mc RI$, where $i_n\in F_{-n}I$,
and let $\bar g(z)=g(z)\bar 1\in\mc RM$, 
where $g(z)=\sum_{n\leq 0}g_nz^n\in\mc RU(\mf g)$.
We have
$$
i(z)g(z)\bar 1
=
\sum_{n\leq N} 
\sum_{n-N\leq m\leq0}
z^ni_{n-m}g_m\bar 1
\,.
$$
Since $z^ng_mi_{n-m}\bar 1=0$,
and since $[i_{n-m},g_m]\in F_{-n-1}U(\mf g)$,
we conclude that
$i(z)g(z)\bar 1\in z^{-1}\mc RM$, as claimed.
Next, we prove claim (d).
An element $g(z)=\sum_{n\leq N}g_nz^n\in\mc RU(\mf g)$
decomposes as $g(z)=g_+(z)+g_0+g_-(z)$,
where
\begin{equation}\label{0411:eq2}
\begin{array}{l}
\displaystyle{
\vphantom{\Big(}
g_+(z)=\sum_{\frac12\leq n\leq N}g_nz^n\,\,\in\sum_{n=1}^Nz^nF_{-n}I\subset\mc RI
\,,} \\
\displaystyle{
\vphantom{\Big(}
g_0=k+i_0\,\,\in F_0U(\mf g)=\mb F\oplus F_0I
\,\,\,\,
k\in\mb F,\,i_0\in F_0I
\,,} \\
\displaystyle{
\vphantom{\Big(}
g_-(z)=\sum_{n\leq-\frac12}g_nz^n\,\,\in\widehat{\sum}_{n\leq-\frac12}z^nF_{-n}U(\mf g)
\,.}
\end{array}
\end{equation}
Clearly, $g_-(z)\cdot\mc RM\subset\mc R_-M$.
On the other hand, $g_+(z)+i_0\in\mc RI$,
hence, by claim (c), we have $(g_+(z)+i_0)\cdot\mc RM\subset z^{-1}\mc RM\subset\mc R_-M$.
In conclusion, $g(z)\cdot\mc R_-M\subset\mc R_-M$, as claimed.
Moreover, by the above observations, we also have 
$(g(z)-k)=(g_+(z)+i_0+g_-(z))\cdot\mc RM\subset\mc R_-M$.
Hence,
$g(z)\bar1\equiv k\bar1\mod\mc R_-M$.
Since, by definition, $\epsilon(g(z))=k$, this proves claim (e).
\end{proof}
\begin{proposition}\label{0410:prop1}
An element $g(z)\in\mc RU(\mf g)$ acts as an invertible endomorphism of $\mc RM$
if and only if $\epsilon(g(z))\neq0$.
\end{proposition}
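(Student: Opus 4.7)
The plan is to prove the two implications separately. The ``only if'' direction is an immediate consequence of Lemma \ref{0410:lem1}(d),(e), while the ``if'' direction is where the real work lies, requiring a topological geometric series argument that I will sketch.

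For the ``only if'' direction I would argue as follows. By Lemma \ref{0410:lem1}(d), the subspace $\mc R_-M \subset \mc RM$ is stable under the left action of $\mc RU(\mf g)$, in particular under left multiplication by $g(z)$. Therefore this endomorphism descends to an $\mb F$-linear endomorphism of the one-dimensional quotient $\mc RM/\mc R_-M \cong \mb F\bar 1$, and by part (e) of the lemma this descended map is multiplication by $\epsilon(g(z))$. If left multiplication by $g(z)$ is invertible on $\mc RM$, it is in particular surjective, hence so is its restriction to the quotient, which forces $\epsilon(g(z)) \neq 0$.

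For the ``if'' direction, set $k = \epsilon(g(z)) \in \mb F^\times$ and write $g(z) = k(1 + b(z))$ with $b(z) := k^{-1}g(z) - 1 \in \mc RU(\mf g)$, so that $\epsilon(b(z)) = 0$. I would then propose the inverse as the formal geometric series $v \mapsto k^{-1}\sum_{n \geq 0}(-b(z))^n v$, and the heart of the proof is to verify that this series converges in $\mc RM$ for every $v$ to yield a two-sided $\mb F$-linear inverse. For convergence I would invoke the $z^{-1/2}$-adic topology on $\mc RM$, with respect to which $\mc RM$ is complete by virtue of the defining completion in \eqref{eq:reesM}. The key claim to establish is therefore that for every $v \in \mc RM$ and every $N \geq 0$, one has $b(z)^n v \in z^{-N/2}\mc RM$ once $n$ is sufficiently large.

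To prove this claim I would use the decomposition $b(z) = b_-(z) + b_0 + b_+(z)$ from \eqref{0411:eq2} and combine three filtration-lowering mechanisms: Lemma \ref{0410:lem1}(c) already shows that the $\mc RI$-part $b_+(z) + b_0$ sends $\mc RM$ into $z^{-1}\mc RM$; the negative part $b_-(z)$ strictly lowers the top $z$-power of any element it acts on by at least $\frac12$; and the associated graded of $b_0$ lies in $\gr_0 I$, which annihilates $\gr M$ since $\gr I$ is an ideal of $\gr U(\mf g)$, so $b_0$ acts locally nilpotently on each finite-dimensional Kazhdan piece $F_\Delta M$, whose lower bound is $F_0 M = \mb F$ by Lemma \ref{0316:lem1b}. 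The main obstacle is that neither $b_0$ nor $b_-(z)$ individually lowers the $z^{-1/2}$-adic filtration of $\mc RM$ (only $b_+(z) + b_0$ does, via Lemma \ref{0410:lem1}(c)), so one must work coefficient by coefficient in the $z$-expansion, pushing commutators through in the style of the proof of that lemma and using the lower-boundedness of the Kazhdan filtration to conclude that, at each fixed truncation modulo $z^{-N/2}\mc RM$, the operator $b(z)$ has only finitely many potentially nonzero contributions and acts nilpotently. Once this claim is in hand, the geometric series converges pointwise and is readily checked to give a two-sided inverse of left multiplication by $1 + b(z)$.
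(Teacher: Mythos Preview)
Your ``only if'' direction is correct and matches the paper's argument exactly.

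For the ``if'' direction, your overall strategy (geometric series for $1+b(z)$) is right, but the execution has a gap and some confusion. First, a minor point: since $\epsilon_0(b_0)=0$ and $F_0U(\mf g)=\mb F\oplus F_0I$, you have $b_0\in F_0I\subset\mc RI$, so $b_0$ \emph{does} lower via Lemma~\ref{0410:lem1}(c); your statement that it does not is inconsistent with your own observation that $b_+(z)+b_0$ does. The genuine obstacle is only $b_-(z)$: it lowers the top $z$-power but \emph{not} the $z^{-1}$-adic filtration $z^{-n}\mc RM$ (because its coefficients $b_n\in F_{-n}U(\mf g)$ need not lie in $F_{-n-\frac12}U(\mf g)$). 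Your proposed fix---``pushing commutators through'' and using local nilpotency on each $F_\Delta M$---is too vague to be a proof; you would have to control a mixed expansion in $b_-(z)$ and $c(z):=b_0+b_+(z)$ where the two pieces lower with respect to \emph{different} filtrations, and you do not explain how these interact.

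The paper avoids this entirely by a clean factorization. Writing $g_0=k+i_0$ with $i_0\in F_0I$, one has
\[
g(z)=(k+g_-(z))\big(1+i(z)\big),\qquad i(z):=(k+g_-(z))^{-1}(g_+(z)+i_0)\in\mc RI,
\]
where $(k+g_-(z))^{-1}=\sum_{n\geq0}(-1)^nk^{-n-1}g_-(z)^n$ converges in $\mc RU(\mf g)$ itself (since $g_-(z)^n$ has top $z$-power $\leq -n/2$ and each coefficient stays in the correct Kazhdan piece). Then $i(z)\in\mc RI$ because $\mc RI$ is a left ideal, and Lemma~\ref{0410:lem1}(c) gives $i(z)^n\cdot\mc RM\subset z^{-n}\mc RM$, so $(1+i(z))^{-1}$ acts on $\mc RM$ by a convergent geometric series. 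This separates the two kinds of ``lowering'' completely: the $g_-$-part is inverted in the algebra, and only the $\mc RI$-part needs to be inverted on the module. Your approach can be salvaged, but the shortest route is precisely this factorization---which is essentially $1+b(z)=(1+b_-(z))(1+(1+b_-(z))^{-1}c(z))$ once you notice $(1+b_-(z))^{-1}$ already exists in $\mc RU(\mf g)$.
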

\begin{proof}
First, if $g(z)$ acts as an invertible endomorphism of $\mc RM$,
then it must act as an invertible endomorphism of the quotient module
$\mc RM/\mc R_-M=\mb F\bar1$.
Hence, by Lemma \ref{0410:lem1}(e), we have $\epsilon(g(z))\neq0$.
We are left to prove the ``if'' part.
Let $g(z)$ decompose as in \eqref{0411:eq2},
and we assume that $\epsilon(g(z))=k\neq0$.
Clearly, the element $k+g_-(z)$ is invertible in the Rees algebra $\mc RU(\mf g)$,
and its inverse can be computed by geometric series expansion:
\begin{equation}\label{0410:eq12}
h(z):=(k+g_-(z))^{-1}
=\sum_{n=0}^\infty (-1)^nk^{-n-1} g_-(z)^n
\,.
\end{equation}
Indeed, by construction, $g_-(z)^n$ lies in $z^{-\frac{n}2}U(\mf g)[[z^{-\frac12}]]$,
and therefore the series \eqref{0410:eq12} is well defined.
Then we have
\begin{equation}\label{0410:eq13}
g(z)=
(k+g_-(z))
(1+h(z)(g_+(z)+i_0))
\,.
\end{equation}
Recall that $g_+(z)+i_0\in\mc RI$, which is a left ideal of the Rees algebra $\mc RU(\mf g)$.
Hence,
\begin{equation}\label{0411:eq3}
i(z):=h(z)(g_+(z)+i_0)\,\in\mc RI\,,
\end{equation}
and we are left to prove, by \eqref{0410:eq13}, that $1+i(z)$ acts as an invertible endomorphism
of $\mc RM$.
By Lemma \ref{0410:lem1},
for every $\bar g(z)\in\mc RM$,
we have $i(z)^n\bar g(z)\in z^{-n}\mc RM$.
Hence, the geometric series expansion
\begin{equation}\label{0411:eq4}
(1+i(z))^{-1}\bar g(z)
=
\sum_{n=0}^\infty (-1)^ni(z)^n\bar g(z)
\,,
\end{equation}
is defined in $\mc RM$.
\end{proof}

\subsection{The localized Rees algebra $\mc R_\infty U(\mf g)$}
\label{sec:5.4}

By Proposition \ref{0410:prop1}, an element $g(z)\in\mc RU(\mf g)$, with $\epsilon(g(z))\neq0$,
acts as an invertible endomorphism of the Rees module $\mc RM$.
But, in general, the inverse of $g(z)$ does not necessarily exist in the Rees algebra,
since the geometric series expansion $\sum_{n=0}^\infty (-1)^ni(z)^n$ in \eqref{0411:eq4}
may involve infinitely many positive powers of $z$.
We shall construct an algebra extension $\mc R_\infty U(\mf g)$ of $\mc RU(\mf g)$, 
still acting on the module $\mc RM$, on which we extend the map \eqref{0406:eq1}
to an algebra homomorphism 
$\epsilon:\,\mc R_\infty U(\mf g)\to\mb F$,
with the fundamental property that an element $\alpha(z)\in\mc R_-U(\mf g)$
is invertible if and only if $\epsilon(\alpha(z))\neq0$.

The algebra $\mc R_\infty U(\mf g)$ will be obtained by a limiting procedure.
We start by constructing the algebra $\mc R_1U(\mf g)$.
It is defined as the subalgebra of a skewfield extension of $\mc RU(\mf g)$
(contained in the skewfield of fractions of $U((z^{-\frac12}))$)
generated by $\mc RU(\mf g)$ and the inverse of the elements $a(z)\in\mc RU(\mf g)$
such that $\epsilon(a(z))\neq0$.
In other words, elements of $\mc R_1U(\mf g)$ are finite sums of the form
\begin{equation}\label{0410:eq1}
\alpha(z)
=
\sum a_1(z)b_1(z)^{-1}\dots a_\ell(z)b_\ell(z)^{-1}
\,\in\mc R_1U(\mf g)
\,,
\end{equation}
where 
$a_1(z),\dots,a_\ell(z),b_1(z),\dots,b_\ell(z)\,\in\mc RU(\mf g)$,
and $\epsilon(b_1(z))\cdot\dots\cdot\epsilon(b_\ell(z))\neq0$.
Note that, since $\epsilon(b_i(z))\neq0$ for all $i$,
we can naturally extend the map \eqref{0406:eq1}
to an algebra homomorphism $\epsilon:\,\mc R_1U(\mf g)\to\mb F$,
given, on an element of the form \eqref{0410:eq1}, by
\begin{equation}\label{0411:eq5}
\epsilon(\alpha(z))
=
\sum \frac{\epsilon{a_1(z)}\dots\epsilon(a_\ell(z))}{\epsilon(b_1(z))\dots\epsilon(b_\ell(z))}
\,\in\mb F
\,.
\end{equation}
Note also that,
as a consequence of Proposition \ref{0410:prop1},
the action of the Rees algebra $\mc RU(\mf g)$ on $\mc RM$
naturally extends to an action of $\mc R_1U(\mf g)$ on $\mc RM$.
\begin{proposition}\label{0411:lem1}
The algebra $\mc R_1U(\mf g)$ has the following properties:
\begin{enumerate}[(i)]
\item
$z^{-1}$ is central in $\mc R_1U(\mf g)$;
\item
the subspace $\mc R_-M\subset\mc RM$ is preserved by the action of $\mc R_1U(\mf g)$;
\item
for every $\alpha(z)\in\mc R_1U(\mf g)$, we have
$\alpha(z)\bar1\equiv\epsilon(\alpha(z))\bar1\,\mod\mc R_-M$,
where $\epsilon$ is the map \eqref{0411:eq5};
\item
for every $\alpha(z)\in\mc R_1U(\mf g)$ and every integer $N\geq0$,
there exist $\alpha_N(z)\in\mc RU(\mf g)$ such that
$(\alpha(z)-\alpha_N(z))\cdot\mc RM\subset z^{-N-1}\mc RM$;
\item
an element $\alpha(z)\in\mc R_1U(\mf g)$ acts as an invertible endomorphism of $\mc RM$
if and only if $\epsilon(\alpha(z))\neq0$.
\end{enumerate}
\end{proposition}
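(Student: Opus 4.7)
The plan is to prove the five assertions in order, with (i)--(iii) following by reduction to the generators of $\mc R_1U(\mf g)$, and (iv)--(v) relying on the factorisation already used in the proof of Proposition \ref{0410:prop1}. For (i), recall that $z^{-\frac12}$ is central in $\mc RU(\mf g)$, hence commutes with every $b(z)$; in the ambient skewfield extension this forces $z^{-\frac12}$, and a fortiori $z^{-1}$, to commute with each $b(z)^{-1}$, so centrality in $\mc R_1U(\mf g)$ follows.

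For (ii) and (iii), the claims on generators $a(z)\in\mc RU(\mf g)$ are Lemma \ref{0410:lem1}(d),(e). For a generator $b(z)^{-1}$ with $k:=\epsilon(b(z))\neq 0$, if $m\in\mc R_-M$ and we write $b(z)^{-1}m=c\bar 1+m'$ with $m'\in\mc R_-M$, then $m=cb(z)\bar 1+b(z)m'\equiv ck\bar 1\pmod{\mc R_-M}$; since $m\in\mc R_-M$ and $k\neq 0$, this forces $c=0$. Applying the same reasoning to $\bar1$ shows $b(z)^{-1}\bar 1\equiv k^{-1}\bar 1\pmod{\mc R_-M}$, which matches $\epsilon(b(z)^{-1})$ as defined in \eqref{0411:eq5}. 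Hence the induced quotient action $\mc R_1U(\mf g)\to\End(\mb F\bar 1)=\mb F$ is an algebra homomorphism that agrees with $\epsilon$ on generators, and therefore equals $\epsilon$ everywhere.

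For (iv), the key step is to approximate $b(z)^{-1}$ for $b(z)\in\mc RU(\mf g)$ with $\epsilon(b(z))\neq 0$. Using the factorisation $b(z)=(k+b_-(z))(1+i(z))$ from the proof of Proposition \ref{0410:prop1}, where $i(z)\in\mc RI$ and $(k+b_-(z))^{-1}\in\mc RU(\mf g)$ is obtained by geometric series, Lemma \ref{0410:lem1}(c) yields $i(z)^n\mc RM\subseteq z^{-n}\mc RM$. Truncating $(1+i(z))^{-1}=\sum_{n\geq 0}(-i(z))^n$ at level $N$ therefore produces an element of $\mc RU(\mf g)$ approximating $b(z)^{-1}$ to the required order. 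To extend this to all of $\mc R_1U(\mf g)$, the product decomposition
\[
\alpha_1\alpha_2-\alpha_1^{(N)}\alpha_2^{(N)}=\alpha_1(\alpha_2-\alpha_2^{(N)})+(\alpha_1-\alpha_1^{(N)})\alpha_2^{(N)}
\]
together with centrality of $z^{-1}$ (which lets us commute $z^{-N-1}$ past $\alpha_1$) shows that the approximation property is closed under products; additivity is immediate, so induction on the word length of $\alpha(z)$ gives (iv).

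Finally, for (v), the forward direction follows from (iii) applied to the quotient $\mc RM/\mc R_-M=\mb F$. Conversely, assume $\epsilon(\alpha(z))\neq 0$ and, using (iv), pick $\alpha_0(z)\in\mc RU(\mf g)$ with $(\alpha(z)-\alpha_0(z))\mc RM\subseteq z^{-1}\mc RM$; comparing both sides on $\bar 1$ modulo $\mc R_-M$ via (iii) and Lemma \ref{0410:lem1}(e) forces $\epsilon(\alpha_0(z))=\epsilon(\alpha(z))\neq 0$, so Proposition \ref{0410:prop1} makes $\alpha_0(z)$ act invertibly on $\mc RM$. Writing $\alpha(z)=\alpha_0(z)(1+\phi)$ with $\phi:=\alpha_0(z)^{-1}(\alpha(z)-\alpha_0(z))$, centrality of $z^{-1}$ yields $\phi^n\mc RM\subseteq z^{-n}\mc RM$, so $1+\phi$ is inverted on $\mc RM$ by the convergent geometric series $\sum_{n\geq 0}(-\phi)^n$. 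The main obstacle I anticipate is the bookkeeping in (iv): one must simultaneously track the algebraic description of $\alpha(z)$ as a word in $\mc RU(\mf g)$ and its inverses, and its operator-theoretic action on the $z^{-N}\mc RM$-filtration of $\mc RM$; centrality of $z^{-1}$ is precisely the tool that decouples these two aspects and makes the induction run cleanly.
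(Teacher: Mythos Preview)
Your proof is correct. Parts (i)--(iii) match the paper's argument closely; you simply spell out the codimension-one linear-algebra step in (ii) that the paper leaves as an exercise. In (iv) you induct on word length via the telescoping identity for products, whereas the paper first globally rewrites $\alpha(z)$ in the form $\sum g_1(z)(1+i_1(z))^{-1}\cdots g_\ell(z)(1+i_\ell(z))^{-1}g_{\ell+1}(z)$ and then truncates every geometric series at once; both rest on Lemma~\ref{0410:lem1}(c) and are equivalent in spirit. The genuine difference is in (v): the paper uses the \emph{entire} sequence $\alpha_N(z)$ from (iv), inverts each $\alpha_N(z)$ in $\mc R_1U(\mf g)$ to obtain $\beta_N(z)$, shows that the $\beta_N$ form a Cauchy sequence in $\End(\mc RM)$, and takes $\beta(z)=\lim_N\beta_N(z)$. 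Your route is shorter and more direct: a single approximant $\alpha_0(z)$ already suffices, since writing $\alpha(z)=\alpha_0(z)(1+\phi)$ with $\phi\,\mc RM\subset z^{-1}\mc RM$ lets you invert $1+\phi$ on $\mc RM$ by a convergent geometric series. Your argument avoids the limit construction entirely; the paper's, by contrast, makes the approximating inverses $\beta_N$ explicit, which mirrors the shape of the later inductive step to $\mc R_nU(\mf g)$.
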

\begin{proof}
Claim (i) is obvious.
By Lemma \ref{0410:lem1}, $\mc R_-M$ is preserved by the action of $\mc RU(\mf g)$.
If $b(z)\in\mc RU(\mf g)$ is such that $\epsilon(b(z))\neq0$,
then, by Proposition \ref{0410:prop1}, it acts as an invertible endomorphism of $\mc RM$,
preserving the subspace $\mc R_-M$, which is of finite (actually 1) codimension.
Then, it is a simple linear algebra exercise to show that the inverse 
$b(z)^{-1}$ preserves $\mc R_-M$ as well.
As a consequence, every element \eqref{0410:eq1} of $\mc R_1U(\mf g)$
preserves $\mc R_-M$, proving (ii).

By Lemma \ref{0410:lem1}(e), we have $a(z)\bar 1\equiv\epsilon(a(z))\bar1\mod\mc R_-M$
for every $a(z)\in\mc RU(\mf g)$,
and by (ii) we also have $b(z)^{-1}\bar 1\equiv\frac{\bar1}{\epsilon(b(z))}\mod\mc R_-M$
for every $b(z)\in\mc RU(\mf g)$ such that $\epsilon(b(z))\neq0$.
Hence, claim (iii) follows immediately from (ii).

Next, we prove claim (iv).
First note that, if $b(z)\in\mc RM$ is such that $k=\epsilon(b(z))\neq0$,
then, decomposing it as we did in \eqref{0410:eq13},
we can write its inverse as
$$
b(z)^{-1}=(1+i(z))^{-1}h(z)\,,
$$
for some $i(z)\in\mc RI$, and $h(z)\in\mc RU(\mf g)$.
Hence, the generic element \eqref{0410:eq1} of $\mc R_1U(\mf g)$
can be assumed to have the form
\begin{equation}\label{0410:eq1b}
\alpha(z)
=
\sum g_1(z)(1+i_1(z))^{-1}\dots g_\ell(z)(1+i_\ell(z)^{-1})g_{\ell+1}(z)
\,,
\end{equation}
for some $g_1(z),\dots,g_{\ell+1}(z)\in\mc RU(\mf g)$,
and $i_1(z),\dots,i_\ell(z)\in\mc RI$.
Then, given the integer $N\geq0$,
we construct $\alpha_N(z)\in\mc RU(\mf g)$ 
by expanding each inverse $(1+i_s(z))^{-1}$ appearing in the formal expression
\eqref{0410:eq1b} by geometric series up to the $N$-th power:
\begin{equation}\label{0410:eq2}
\alpha_N(z)
=
\sum 
\sum_{n_1,\dots,n_\ell=0}^N
g_1(z)(-i_1(z))^{n_1}\dots g_\ell(z)(-i_\ell(z))^{n_\ell}g_{\ell+1}(z)
\,\in\mc RU(\mf g)
\,.
\end{equation}
It follows immediately from Lemma \ref{0410:lem1}(c)
(and the definition of the action of $(1+i(z))^{-1}$ on $\mc RM$,
by geometric series expansion, cf. \eqref{0411:eq4})
that $(\alpha(z)-\alpha_N(z))\cdot\mc RM\subset z^{-N-1}\mc RM$,
proving (iv).

Finally, we prove claim (v).
First, we prove the ``only if'' part.
Let $\alpha(z)\in\mc R_1U(\mf g)$ be such that $\epsilon(\alpha(z))=0$.
By (ii) we have $\alpha(z)\cdot R_-M\subset\mc R_-M$,
and by (iii) we also have $\alpha(z)\bar1\in\mc R_-M$.
Hence, $\alpha(z)\cdot\mc RM\subset\mc R_-M$,
so the action of $\alpha(z)$ on $\mc RM$ is not surjective,
and therefore not invertible.
We are left to prove the ``if'' part.
Suppose that $\epsilon(\alpha(z))\neq0$.
By (iv), there is a sequence of elements $\alpha_0(z),\alpha_1(z),\alpha_2(z),\dots\in\mc RU(\mf g)$
with the property that
\begin{equation}\label{0412:eq1a}
(\alpha(z)-\alpha_N(z))\cdot\mc RM\subset z^{-N-1}\mc RM
\,.
\end{equation}
Note that, then, since $\alpha_n-\alpha_N=(\alpha-\alpha_N)-(\alpha-\alpha_n)$,
we also have
\begin{equation}\label{0412:eq1}
(\alpha_n(z)-\alpha_N(z))\cdot\mc RM\,\subset\, z^{-N-1}\mc RM
\,\text{ for every }\,
n\geq N\,.
\end{equation}
Note also that, since $z^{-N-1}\mc RM\subset\mc R_-M$,
it follows by (iii) that
$$
0\neq\epsilon(\alpha(z))=\epsilon(\alpha_N(z))
\,\text{ for every }\, N\geq0
\,.
$$
Hence, by Proposition \ref{0410:prop1},
each $\alpha_N(z)$ acts as an invertible endomorphism of $\mc RM$,
and, by the construction of $\mc R_1U(\mf g)$, it is invertible in $\mc R_1U(\mf g)$.
Let us denote by $\beta_N(z)\in\mc R_1U(\mf g)$ its inverse:
\begin{equation}\label{0412:eq3}
\beta_N(z)\alpha_N(z)=\alpha_N(z)\beta_N(z)=1\,\text{ in }\,\mc R_1U(\mf g)
\,.
\end{equation}
Since $z^{-1}$ commutes with $\alpha_N(z)$, it also commutes with $\beta_N(z)$.
Moreover, by the obvious identity
$$
\beta_n(z)-\beta_N(z)
=
\beta_n(z)(\alpha_N(z)-\alpha_n(z))\beta_N(z)
\,,
$$
we have, as a consequence of \eqref{0412:eq1}, that
$$
(\beta_n(z)-\beta_N(z))\cdot\mc RM\,\subset\, z^{-N-1}\mc RM
\,\text{ for every }\,
n\geq N\,.
$$
Hence, there is a well defined limit
\begin{equation}\label{0412:eq2}
\beta(z):=\lim_{N\to\infty}\beta_N(z)\,\in\,\End(\mc RM)
\,,
\end{equation}
and by construction 
\begin{equation}\label{0412:eq1b}
(\beta(z)-\beta_N(z))\cdot\mc RM\subset z^{-N-1}\mc RM
\,.
\end{equation}
We claim that $\beta(z)$ is the inverse of $\alpha(z)$ in $\End(\mc RM)$.
Indeed, for every $N\geq0$ we have the following identity in $\End(\mc RM)$:
$$
\alpha(z)\beta(z)
=
\alpha_N(z)\beta_N(z)
+\alpha_N(z)(\beta(z)-\beta_N(z))+(\alpha(z)-\alpha_N(z))\beta(z)
\,.
$$
Hence, by \eqref{0412:eq1a}, \eqref{0412:eq3} and \eqref{0412:eq1b},
we get
\begin{equation}\label{0412:eq4}
(\alpha(z)\beta(z)-\id_{\mc RM})(\mc RM)\subset z^{-N-1}\mc RM
\,.
\end{equation}
Since \eqref{0412:eq4} holds for every $N\geq0$ and since obviously, $\cap_Nz^{-N-1}\mc RM=0$,
we get that $\alpha(z)\beta(z)=\id_{\mc RM}$.
Similarly for $\beta(z)\alpha(z)$.
\end{proof}
Suppose, by induction on $n\geq2$, that we constructed an algebra $\mc R_{n-1}U(\mf g)$
satisfying the properties (i)-(v) of Proposition \eqref{0410:prop1}.
We then define the algebra extension $\mc R_nU(\mf g)$ of $\mc R_{n-1}U(\mf g)$
as the subalgebra of the skewfield of fractions of $\mc RU(\mf g)$
generated by $\mc R_{n-1}U(\mf g)$ and the inverses of the elements $a(z)\in\mc R_{n-1}U(\mf g)$
such that $\epsilon(a(z))\neq0$.
In other words, elements of $\mc R_nU(\mf g)$ are formal expressions of the form \eqref{0410:eq1}
with $a_1(z),\dots,a_\ell(z),b_1(z),\dots,b_\ell(z)\in\mc R_{n-1}U(\mf g)$,
and $\epsilon(b_1(z))\neq0,\dots,\epsilon(b_\ell(z))\neq0$.
We extend $\epsilon$ 
to an algebra homomorphism $\epsilon:\,\mc R_nU(\mf g)\to\mb F$,
given by \eqref{0411:eq5}.
Moreover, as a consequence of the property (iv) of $\mc R_{n-1}U(\mf g)$,
the action of $\mc R_{n-1}U(\mf g)$ on $\mc RM$
naturally extends to an action of $\mc R_nU(\mf g)$ on $\mc RM$.
\begin{proposition}\label{0411:lem2}
Properties (i)-(v) of Proposition \eqref{0411:lem1} hold for $\mc R_nU(\mf g)$.
\end{proposition}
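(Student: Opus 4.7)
The plan is a straightforward induction on $n \geq 1$, with base case $n=1$ being Proposition \ref{0411:lem1} itself. For the inductive step, assume that $\mc R_{n-1}U(\mf g)$ satisfies properties (i)--(v), and mirror the proof of Proposition \ref{0411:lem1} with $\mc R_{n-1}U(\mf g)$ playing the role that $\mc RU(\mf g)$ played in that argument. Property (i) is immediate: $z^{-1}$ is already central in $\mc R_{n-1}U(\mf g)$ by the inductive hypothesis, so the new generators $b(z)^{-1}$ (with $b(z)\in\mc R_{n-1}U(\mf g)$, $\epsilon(b(z))\neq 0$) commute with $z^{-1}$ as well. For (ii), I would argue as in the base case: by inductive (v), any such $b(z)$ acts invertibly on $\mc RM$, and by inductive (ii) it preserves $\mc R_-M$, which has codimension one in $\mc RM$; a standard linear algebra argument then shows that $b(z)^{-1}$ also preserves $\mc R_-M$. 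Property (iii) follows from (ii) together with the identity $b(z)^{-1}\bar 1\equiv\epsilon(b(z))^{-1}\bar 1\mod\mc R_-M$ and the definition \eqref{0411:eq5} of $\epsilon$ on $\mc R_nU(\mf g)$.

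The core of the work is (iv). For a single factor of the form $b(z)^{-1}$ with $b(z)\in\mc R_{n-1}U(\mf g)$ and $\epsilon(b(z))=k\neq 0$, I would first use the inductive (iv) to produce $b_N'(z)\in\mc RU(\mf g)$ satisfying $(b(z)-b_N'(z))\cdot\mc RM\subset z^{-N-1}\mc RM$. By inductive (iii) this forces $\epsilon(b_N'(z))=k\neq 0$, so $b_N'(z)^{-1}\in\mc R_1U(\mf g)$ is defined. Using the identity
\[
b(z)^{-1}-b_N'(z)^{-1}=b(z)^{-1}\bigl(b_N'(z)-b(z)\bigr)b_N'(z)^{-1},
\]
and the crucial fact that $z^{-N-1}\mc RM$ is preserved by every action (since $z^{-1}$ is central by (i)), one concludes $(b(z)^{-1}-b_N'(z)^{-1})\cdot\mc RM\subset z^{-N-1}\mc RM$. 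The base case (iv) applied to $b_N'(z)^{-1}\in\mc R_1U(\mf g)$ then yields a further approximation in $\mc RU(\mf g)$ modulo $z^{-N-1}\mc RM$, and composing the two approximations handles the factor $b(z)^{-1}$. A Leibniz-type telescoping argument (again using centrality of $z^{-1}$ to absorb error terms) extends this to products and sums of such factors, giving (iv) for an arbitrary $\alpha(z)\in\mc R_nU(\mf g)$ of the form \eqref{0410:eq1}.

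Finally, (v) follows from (ii)--(iv) by exactly the limiting argument used in the base case: the ``only if'' direction uses that $\epsilon(\alpha(z))=0$ forces $\alpha(z)\cdot\mc RM\subset\mc R_-M$, a proper submodule; for the ``if'' direction, one selects approximations $\alpha_N(z)\in\mc RU(\mf g)$ by (iv), notes $\epsilon(\alpha_N(z))=\epsilon(\alpha(z))\neq 0$ by (iii), inverts each $\alpha_N(z)$ inside $\mc R_1U(\mf g)$, and shows via the identity $\beta_n-\beta_N=\beta_n(\alpha_N-\alpha_n)\beta_N$ that the sequence $\beta_N(z)$ stabilizes modulo $z^{-N-1}\mc RM$, producing a limit $\beta(z)\in\End(\mc RM)$ that is a two-sided inverse to $\alpha(z)$. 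The main obstacle will be the bookkeeping in (iv): one must verify that the two-step approximation (first within $\mc R_{n-1}U(\mf g)$ by the inductive hypothesis, then within $\mc R_1U(\mf g)\subset\mc RU(\mf g)$ by the base case) composes correctly, and that replacing inverses by their approximations contributes errors that remain in $z^{-N-1}\mc RM$. This is exactly where the centrality of $z^{-1}$, asserted in (i), plays the decisive role.
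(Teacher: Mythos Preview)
Your proposal is correct and follows essentially the same approach as the paper. The only organizational difference is in (iv): the paper replaces each $b_i(z)$ by an approximant $b_{i,N}(z)\in\mc RU(\mf g)$ to obtain an intermediate element $\tilde{\alpha}_N(z)\in\mc R_{n-1}U(\mf g)$ and then invokes the inductive (iv) once on $\tilde{\alpha}_N(z)$, whereas you approximate each factor separately down to $\mc RU(\mf g)$ and then telescope---but the underlying computation (in particular the identity $b^{-1}-b_N'^{-1}=b^{-1}(b_N'-b)b_N'^{-1}$ and the use of centrality of $z^{-1}$) is the same.
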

\begin{proof}
The proofs of properties (i), (ii), (iii)
are the same as the corresponding proofs of (i), (ii) and (iii) in Proposition \ref{0411:lem1},
with $\mc RU(\mf g)$ and $\mc R_1U(\mf g)$ replaced 
by $\mc R_{n-1}U(\mf g)$ and $\mc R_nU(\mf g)$ respectively.
Property (iv) requires a proof.
Let $\alpha(z)\in\mc R_nU(\mf g)$ be as in \eqref{0410:eq1},
with $a_1(z),\dots,a_\ell(z),b_1(z),\dots,b_\ell(z)\in\mc R_{n-1}U(\mf g)$,
and $\epsilon(b_1(z)),\dots,\epsilon(b_\ell(z))\neq0$.
By the inductive assumption, there exist
elements $b_{1,N}(z),\dots,b_{\ell,N}(z)\in\mc RU(\mf g)$
such that $(b_i(z)-b_{i,N}(z))\cdot\mc RM\subset z^{-N-1}\mc RM$.
Since $z^{-N-1}\mc RM\subset\mc R_-M$,
we have $\epsilon(b_{i,N}(z))=\epsilon(b_i(z))\neq0$,
so each $b_{i,N}(z)$ is invertible in $\mc R_1U(\mf g)\subset\mc R_{n-1}U(\mf g)$.
We let
\begin{equation}\label{0412:eq5}
\tilde{\alpha}_N(z)
=
\sum a_1(z)b_{1,N}(z)^{-1}\dots a_\ell(z)b_{\ell,N}(z)^{-1}
\,\in\mc R_{n-1}U(\mf g)
\,.
\end{equation}
By construction, 
\begin{equation}\label{0412:eq6}
(\alpha(z)-\tilde{\alpha}_N(z))\cdot\mc RM\subset z^{-N-1}\mc RM
\,.
\end{equation}
But then, again by the inductive assumption, there exists $\alpha_N(z)\in\mc RU(\mf g)$ 
such that
\begin{equation}\label{0412:eq7}
(\tilde{\alpha}_N(z)-\alpha_N(z))\cdot\mc RM\subset z^{-N-1}\mc RM
\,.
\end{equation}
Combining \eqref{0412:eq6} and \eqref{0412:eq7},
we get $(\alpha(z)-\alpha_N(z))\cdot\mc RM\subset z^{-N-1}\mc RM$,
as desired.
The proof of property (v)
is the same as the corresponding proofs in Proposition \ref{0411:lem1},
with $\mc R_1U(\mf g)$ replaced by $\mc R_nU(\mf g)$.
\end{proof}
We constructed an increasing sequence of algebras
$$
\mc RU(\mf g)\subset\mc R_1U(\mf g)\subset\mc R_2U(\mf g)\subset\dots
\,,
$$
each acting on the Rees module $\mc RM$,
and each satisfying the properties (i)-(v) of Proposition \ref{0411:lem1}.
We then define the localized Rees algebra $\mc R_\infty U(\mf g)$ as their union.
By construction, $\mc R_\infty U(\mf g)$ naturally acts on the Rees module $\mc RM$,
we have a canonical homomorphism $\epsilon:\,\mc R_\infty U(\mf g)\to\mb F$
such that 
\begin{equation}\label{0411:eq6}
\alpha(z)\bar1\equiv\epsilon(\alpha(z))\bar1\mod\mc R_-M
\,\text{ for every }\,
\alpha(z)\in\mc R_\infty U(\mf g)
\,,
\end{equation}
and we have
\begin{corollary}\label{0411:cor}
For en element $\alpha(z)\in\mc R_\infty U(\mf g)$,
the following conditions are equivalent:
\begin{enumerate}[1)]
\item
$\alpha(z)$ is invertible in $\mc R_\infty U(\mf g)$;
\item
$\alpha(z)$ acts as an invertible endomorphism of $\mc RM$;
\item
$\epsilon(\alpha(z))\neq0$.
\end{enumerate}
Moreover, for every $\alpha(z)\in\mc R_\infty U(\mf g)$ and every integer $N\geq0$,
there exists $\alpha_N(z)\in\mc RU(\mf g)$ such that
\begin{equation}\label{0413:eq2}
(\alpha(z)-\alpha_N(z))\cdot\mc RM\subset z^{-N-1}\mc RM
\,.
\end{equation}
\end{corollary}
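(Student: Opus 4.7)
The plan is to leverage the fact that $\mc R_\infty U(\mf g) = \bigcup_{n \geq 0} \mc R_n U(\mf g)$ (with the convention $\mc R_0 U(\mf g) = \mc RU(\mf g)$), so that any $\alpha(z) \in \mc R_\infty U(\mf g)$ lies in some $\mc R_n U(\mf g)$, and then to apply the properties (i)--(v) already established for each $\mc R_n U(\mf g)$ in Propositions \ref{0411:lem1} and \ref{0411:lem2}. In this sense the corollary is essentially a packaging of the inductive content that has already been proved; the only new point is that inverses that a priori live only in $\mc R_{n+1}U(\mf g)$ automatically land in $\mc R_\infty U(\mf g)$.

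For the ``moreover'' part, fix $\alpha(z) \in \mc R_\infty U(\mf g)$ and $N \geq 0$. Choose $n$ so that $\alpha(z) \in \mc R_n U(\mf g)$. Property (iv) of Proposition \ref{0411:lem2} applied to $\mc R_n U(\mf g)$ furnishes the required $\alpha_N(z) \in \mc RU(\mf g)$ satisfying $(\alpha(z) - \alpha_N(z)) \cdot \mc RM \subset z^{-N-1} \mc RM$. This is immediate once one is willing to cite the inductive construction.

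Next I turn to the equivalences. The implication (1)$\Rightarrow$(2) is automatic: $\mc R_\infty U(\mf g)$ acts on $\mc RM$ by algebra endomorphisms, so an inverse in $\mc R_\infty U(\mf g)$ acts as an inverse endomorphism. For (2)$\Rightarrow$(3), suppose $\epsilon(\alpha(z)) = 0$. By property (ii), $\alpha(z)$ preserves $\mc R_-M$, and by property (iii), $\alpha(z)\bar 1 \equiv \epsilon(\alpha(z))\bar 1 = 0 \mod \mc R_-M$; thus $\alpha(z) \cdot \mc RM \subset \mc R_-M$, which is a proper subspace of $\mc RM$ (by \eqref{0411:eq1}), so $\alpha(z)$ cannot act invertibly. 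Finally, for (3)$\Rightarrow$(1): if $\epsilon(\alpha(z)) \neq 0$ and $\alpha(z) \in \mc R_n U(\mf g)$, then by the very construction of $\mc R_{n+1} U(\mf g)$ (where inverses of elements of $\mc R_n U(\mf g)$ with nonzero $\epsilon$ are adjoined), the element $\alpha(z)$ is invertible in $\mc R_{n+1} U(\mf g) \subset \mc R_\infty U(\mf g)$.

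There is no real obstacle here; the substantive work lay in the inductive construction of the $\mc R_n U(\mf g)$ and in proving property (v) at each stage (which required the limiting argument \eqref{0412:eq2}). The corollary itself is then just an exercise in chasing definitions through the union. The one point worth spelling out carefully is that the element $\alpha_N(z)$ produced by the induction is in the original Rees algebra $\mc RU(\mf g)$ rather than in $\mc R_n U(\mf g)$; but this is exactly what property (iv) of Proposition \ref{0411:lem2} asserts, thanks to the two-step approximation \eqref{0412:eq5}--\eqref{0412:eq7} used in its proof.
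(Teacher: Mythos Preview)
Your proof is correct and follows essentially the same approach as the paper's own proof: both arguments pick $n$ with $\alpha(z)\in\mc R_nU(\mf g)$, invoke property (iv) for the ``moreover'' part, use the construction of $\mc R_{n+1}U(\mf g)$ for (3)$\Rightarrow$(1), and note that (1)$\Rightarrow$(2) is immediate. The only cosmetic difference is that the paper cites property~(v) directly for (2)$\Leftrightarrow$(3), whereas you re-derive the (2)$\Rightarrow$(3) direction from properties (ii) and (iii); this is the same argument used inside the proof of property~(v) itself, so nothing is lost or gained.
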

\begin{proof}
Clearly 1) implies 2).
If $\alpha(z)\in\mc R_\infty U(\mf g)$,
then $\alpha(z)\in\mc R_nU(\mf g)$ for some $n\geq0$.
Hence, by Proposition \ref{0411:lem2}(v),
it acts as an invertible endomorphism of $\mc RM$ if and only if $\epsilon(\alpha(z))\neq0$,
and in this case $\alpha(z)$ is invertible in $\mc R_{n+1}U(\mf g)\subset\mc R_\infty U(\mf g)$.
This proves that 2) and 3) are equivalent, and they imply 1).
The last statement of the corollary is clear, 
thanks to property (iv) of Proposition \ref{0411:lem2}.
\end{proof}
The above result can be generalized to the matrix case:
\begin{corollary}\label{0410:prop3}
A square matrix $A(z)\in\Mat_{N\times N}\mc R_\infty U(\mf g)$
is invertible if and only if $\epsilon(A(z))\in\Mat_{N\times N}\mb F$ is non-degenerate.
\end{corollary}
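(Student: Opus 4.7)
The plan is to treat the two implications separately, reducing the matrix statement to the scalar statement of Corollary \ref{0411:cor} via elementary row/column operations (noncommutative Gaussian elimination), with induction on $N$.

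For the ``only if'' part, I would observe that the map $\epsilon:\mc R_\infty U(\mf g)\to\mb F$ is an algebra homomorphism, hence extends entrywise to a unital algebra homomorphism $\epsilon:\Mat_{N\times N}\mc R_\infty U(\mf g)\to\Mat_{N\times N}\mb F$. If $A(z)$ is invertible with inverse $B(z)$, then $\epsilon(A(z))\epsilon(B(z))=\epsilon(B(z))\epsilon(A(z))=\id_N$, so $\epsilon(A(z))$ is non-degenerate.

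For the ``if'' part, I would proceed by induction on $N$. The base case $N=1$ is precisely Corollary \ref{0411:cor}. For the inductive step, since $\epsilon(A(z))$ is non-degenerate, some entry $\epsilon(A(z))_{ij}$ is nonzero. Multiplying $A(z)$ on the left and right by the appropriate permutation matrices (which, having entries in $\mb F\subset\mc R_\infty U(\mf g)$, are invertible in $\Mat_{N\times N}\mc R_\infty U(\mf g)$), I may assume $\epsilon(A(z))_{11}\neq 0$. By Corollary \ref{0411:cor}, the scalar $A(z)_{11}$ is then invertible in $\mc R_\infty U(\mf g)$, so I can write the noncommutative block $LDU$-factorization
\begin{equation*}
A(z)=
\begin{pmatrix} 1 & 0 \\ A_{21}(z)A_{11}(z)^{-1} & \id_{N-1} \end{pmatrix}
\begin{pmatrix} A_{11}(z) & 0 \\ 0 & S(z) \end{pmatrix}
\begin{pmatrix} 1 & A_{11}(z)^{-1}A_{12}(z) \\ 0 & \id_{N-1} \end{pmatrix},
\end{equation*}
where $S(z)=A_{22}(z)-A_{21}(z)A_{11}(z)^{-1}A_{12}(z)$ is the Schur complement (an $(N-1)\times(N-1)$ matrix over $\mc R_\infty U(\mf g)$). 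The outer unipotent factors are manifestly invertible. Applying $\epsilon$ to the identity above, one gets the analogous factorization of $\epsilon(A(z))$, so the nondegeneracy of $\epsilon(A(z))$ together with $\epsilon(A_{11}(z))\neq 0$ forces $\epsilon(S(z))$ to be non-degenerate; by the inductive hypothesis $S(z)$ is then invertible, whence the middle block-diagonal factor is invertible, and hence so is $A(z)$.

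The only conceptual issue is that $\mc R_\infty U(\mf g)$ is noncommutative, which rules out a direct determinantal argument; the Schur complement / Gaussian elimination approach above sidesteps this by only ever inverting scalar entries (via Corollary \ref{0411:cor}) and smaller square blocks (via the inductive hypothesis). I do not expect any serious obstacle: once the first pivot is invertible by the scalar result, the argument is purely formal linear algebra valid over any associative ring.
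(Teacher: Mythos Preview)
Your proof is correct and follows essentially the same approach as the paper: both prove the ``only if'' direction by applying the algebra homomorphism $\epsilon$ entrywise, and both prove the ``if'' direction by Gauss elimination, reducing to the scalar case of Corollary~\ref{0411:cor}. The only cosmetic difference is that the paper first multiplies $A(z)$ on the left by $\epsilon(A(z))^{-1}\in\Mat_{N\times N}\mb F$ to reduce to the case $\epsilon(A(z))=\id_N$, which guarantees all diagonal pivots satisfy $\epsilon(A_{ii}(z))=1$ without needing permutation matrices; your explicit Schur-complement induction with permutations achieves the same end.
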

\begin{proof}
If $A(z)$ is invertible in $\Mat_{N\times N}\mc R_\infty U(\mf g)$,
then, $\epsilon:\,\mc R_\infty U(\mf g)\to\mb F$ is a homomorphism,
the matrix $\epsilon(A(z))\,\in\Mat_{N\times N}\mb F$ is automatically invertible, 
$\epsilon(A(z)^{-1})$ being its inverse.
This proves the ``only if'' part.
For the converse,
let $A(z)\in\Mat_{N\times N}\mc R_\infty U(\mf g)$ be such that 
$\bar A:=\epsilon(A(z))\in\Mat_{N\times N}\mb F$ is an invertible matrix.
Replacing $A(z)$ with $\bar A^{-1}A(z)$,
we reduce to the case when $\epsilon(A(z))=\id_N$.
In this case, it is easy to construct the inverse of the matrix $A(z)$ by the usual Gauss elimination.
\end{proof}
\begin{remark}\label{0410:rem}
It would be interesting to know whether
the multiplicative set $S=\mc RU(\mf g)\backslash\ker\epsilon$ were an Ore subset 
of the Rees algebra $\mc RU(\mf g)$.
If this were the case, the inductive construction above would not be needed,
as it would be 
$$
\mc R_\infty U(\mf g)=\mc R_1U(\mf g)=\mc RU(\mf g)S^{-1}=\{a(z)b(z)^{-1}\,|\,\epsilon(b(z))\neq0\}
\,.
$$
In this case, many arguments in the present paper would simplify.
\end{remark}

\subsection{The Rees $W$-algebra $\mc RW(\mf g,f)$}
\label{sec:5.5}

We can consider the Rees algebra of the $W$-algebra $W(\mf g,f)\subset M$,
defined as
\begin{equation}\label{eq:reesW}
\mc RW(\mf g,f)
=
\widehat{\sum}_{n\in\frac12\mb Z_{\leq0}}
z^nF_{-n}W(\mf g,f)
\,\subset \mc RM
\,.
\end{equation}
It is a subalgebra of $W(\mf g,f)[[z^{-\frac12}]]$.
Equivalently (cf. \eqref{0225:eq3}, \eqref{0225:eq8} and \eqref{0410:eq3}), it is
$$
\mc RW(\mf g,f)
=
\mc R\widetilde W/\mc RI
\,,
$$
where
\begin{equation}\label{eq:reesM2}
\begin{array}{l}
\displaystyle{
\vphantom{\Big(}
\mc R\widetilde{W}
=
\widehat{\sum}_{n\in\frac12\mb Z_{\leq0}}
z^nF_{-n}\widetilde{W}
} \\
\displaystyle{
\vphantom{\Big(}
=
\big\{
g(z)\in\mc RU(\mf g)
\,\big|\,
[a,g(z)]\bar 1=0
\,\text{ for all }\,a\in\mf g_{\geq\frac12}
\big\}
\,\subset\mc RU(\mf g)
\,.}
\end{array}
\end{equation}
Note that $\mf g_{\geq\frac12}\subset F_0U(\mf g)\subset\mc RU(\mf g)$,
hence the adjoint action of $a\in\mf g_{\geq\frac12}$ on $\mc RU(\mf g)$ is well defined.
The ``Rees algebra'' analogue of Lemma \ref{0225:prop1a} holds:
\begin{lemma}\phantomsection\label{0225:prop1a-rees}
$\mc R\widetilde{W}\subset \mc RU(\mf g)$ is a subalgebra of the Rees algebra $\mc RU(\mf g)$,
and $\mc RI\subset\mc R\widetilde{W}$ is its two-sided ideal.
\end{lemma}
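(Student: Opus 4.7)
The plan is to lift Lemma \ref{0225:prop1a} from $U(\mf g)$ to the Rees algebra level, exploiting multiplicativity of the Kazhdan filtration. Both $\mc R\widetilde W$ and $\mc RI$ are completions of the $z$-graded spaces with pieces $z^n F_{-n}\widetilde W$ and $z^n F_{-n}I$, where $F_{-n}\widetilde W = \widetilde W \cap F_{-n}U(\mf g)$ and $F_{-n}I = I \cap F_{-n}U(\mf g)$. Since every element of $\mc RU(\mf g)$ has bounded-above support in $z$-degree, the $z^k$-coefficient of any product is a finite sum of terms from $U(\mf g)$, so it is enough to verify closure under multiplication at each filtered level.

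The central step is to establish the three ``filtered'' inclusions
\[
F_{-n}\widetilde W \cdot F_{-m}\widetilde W \subset F_{-n-m}\widetilde W,\quad
F_{-n}I \cdot F_{-m}\widetilde W \subset F_{-n-m}I,\quad
F_{-n}\widetilde W \cdot F_{-m}I \subset F_{-n-m}I.
\]
Each follows at once from the corresponding set-theoretic statement in Lemma \ref{0225:prop1a}---part (c) for the first, and part (d) for the second and third---combined with the multiplicativity of the Kazhdan filtration, $F_{-n}U(\mf g) \cdot F_{-m}U(\mf g) \subset F_{-n-m}U(\mf g)$. Indeed, a product $xy$ of the relevant type lies in $F_{-n-m}U(\mf g)$ by multiplicativity and simultaneously in $\widetilde W$ or $I$ by Lemma \ref{0225:prop1a}; intersecting gives exactly $F_{-n-m}\widetilde W$ or $F_{-n-m}I$.

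Granting these, the conclusion is immediate. Applying the first inclusion coefficient-wise to a product $w_1(z) w_2(z)$ with both factors in $\mc R\widetilde W$ yields $\mc R\widetilde W \cdot \mc R\widetilde W \subset \mc R\widetilde W$, establishing the subalgebra claim. The second and third inclusions analogously give $\mc RI \cdot \mc R\widetilde W \subset \mc RI$ and $\mc R\widetilde W \cdot \mc RI \subset \mc RI$, showing that $\mc RI$ is a two-sided ideal of $\mc R\widetilde W$. The containment $\mc RI \subset \mc R\widetilde W$ is inherited coefficient-wise from $I \subset \widetilde W$ (Lemma \ref{0225:prop1a}(a)).

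I do not anticipate any substantive obstacle: the only point demanding care is that $\mc RU(\mf g)$ is a completion admitting series with infinitely many negative powers of $z$, but since every element has bounded-above $z$-support, each coefficient of a product remains a finite sum in $U(\mf g)$, and no convergence or Ore-type issue arises at this step.
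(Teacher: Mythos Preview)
Your proof is correct and is precisely the unpacking of the paper's one-line argument, which reads in full: ``It is an obvious consequence of Lemma \ref{0225:prop1a}.'' You have simply made explicit the filtered inclusions and the coefficient-wise verification that the paper leaves to the reader.
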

\begin{proof}
It is an obvious consequence of Lemma \ref{0225:prop1a}.
\end{proof}
We can also consider the extension of the algebra $\mc R\widetilde{W}$ inside $\mc R_\infty\widetilde{W}$,
defined as
\begin{equation}\label{0413:eq1}
\mc R_\infty\widetilde{W}
:=
\big\{
\alpha(z)\in\mc R_\infty U(\mf g)
\,\big|\,
[a,\alpha(z)]\bar 1=0
\,\text{ for all }\,a\in\mf g_{\geq\frac12}
\big\}
\,\subset\mc R_\infty U(\mf g)
\,.
\end{equation}
Clearly, $\mc R\widetilde{W}\subset\mc R_\infty\widetilde{W}\subset\mc R_\infty U(\mf g)$.
Hence, we have
$\mc RW(\mf g,f)=\mc R\widetilde{W}\bar1\subset\mc R_\infty\widetilde{W}\bar1
=:\mc R_\infty W(\mf g,f)\subset\mc RM$,
and it is natural to ask what is $\mc R_\infty W(\mf g,f)$.
The following result says, in fact, that $\mc RW(\mf g,f)=\mc R_\infty W(\mf g,f)$.
\begin{proposition}\label{0413:prop1}
Let $\alpha(z)\in\mc R_\infty U(\mf g)$ and $g(z)\in\mc RU(\mf g)$ be such that
$\alpha(z)\bar1=g(z)\bar1=:w(z)\in\mc RM$.
The following conditions are equivalent:
\begin{enumerate}[(a)]
\item
$[a,\alpha(z)]\bar1=0$ for all $a\in\mf g_{\frac12}$ (i.e., $\alpha(z)\in\mc R_\infty\widetilde{W}$);
\item
$[a,g(z)]\bar1=0$ for all $a\in\mf g_{\frac12}$ (i.e., $g(z)\in\mc R\widetilde{W}$);
\item
$w(z)\in\mc RW(\mf g,f)$.
\end{enumerate}
\end{proposition}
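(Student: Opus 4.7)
The plan is to deduce (b) $\iff$ (c) by straightforward bookkeeping and to carry out the main work on (a) $\iff$ (b) via an approximation argument based on Corollary \ref{0411:cor}.

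First, I would dispose of (b) $\iff$ (c). The direction (b) $\Rightarrow$ (c) is immediate from the definition $\mc RW(\mf g,f) = \mc R\widetilde{W}\cdot\bar 1$. Conversely, assume (c) and choose any $\tilde g(z) \in \mc R\widetilde{W}$ with $\tilde g(z)\bar 1 = w(z)$; then $g(z) - \tilde g(z) \in \mc RI \subset \mc R\widetilde{W}$ by Lemma \ref{0225:prop1a-rees}, whence $g(z) \in \mc R\widetilde{W}$, giving (b).

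For (a) $\iff$ (b), I would set $d(z) = \alpha(z) - g(z) \in \mc R_\infty U(\mf g)$; by hypothesis $d(z)\bar 1 = 0$. For every $a \in \mf g_{\geq\frac12}$,
$$
[a,\alpha(z)]\bar 1 - [a,g(z)]\bar 1 = [a,d(z)]\bar 1 = a\,d(z)\bar 1 - d(z)\,a\bar 1 = -d(z)\,a\bar 1,
$$
so the equivalence reduces to the single claim that $d(z)\,a\bar 1 = 0$ for every $d(z) \in \mc R_\infty U(\mf g)$ with $d(z)\bar 1 = 0$ and every $a \in \mf g_{\geq\frac12}$.

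The main obstacle is that $d(z)$ a priori lies only in $\mc R_\infty U(\mf g)$, not in $\mc RU(\mf g)$: were $d(z)$ actually in $\mc RU(\mf g)$, then $d(z) \in \mc RI$ would have each coefficient in $I$, and the inclusion $Ia \subset I$ from Lemma \ref{0303:lem4}(a) would force $d(z)\,a\bar 1 = 0$ at once. To transfer this to $\mc R_\infty U(\mf g)$ I would invoke the approximation statement at the end of Corollary \ref{0411:cor}: for each integer $N \geq 0$ there exists $d_N(z) \in \mc RU(\mf g)$ with $(d(z)-d_N(z))\cdot\mc RM \subset z^{-N-1}\mc RM$. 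In particular $d_N(z)\bar 1 \in z^{-N-1}\mc RM$; since $\bar 1$ generates $\mc RM$ cyclically over $\mc RU(\mf g)$, one may write $d_N(z) = z^{-N-1}e_N(z) + i_N(z)$ with $e_N(z) \in \mc RU(\mf g)$ and $i_N(z) \in \mc RI$. Splitting
$$
d(z)\,a\bar 1 = (d(z)-d_N(z))\,a\bar 1 + z^{-N-1}\,e_N(z)\,a\bar 1 + i_N(z)\,a\bar 1,
$$
the first two summands lie in $z^{-N-1}\mc RM$, while the third vanishes because each coefficient of $i_N(z)$ lies in $I$ and $Ia\subset I$ by Lemma \ref{0303:lem4}(a). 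Since this holds for every $N$ and $\bigcap_N z^{-N-1}\mc RM = 0$, one concludes $d(z)\,a\bar 1 = 0$, completing the argument.
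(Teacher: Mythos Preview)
Your proof is correct and follows essentially the same approach as the paper. The paper approximates $\alpha(z)$ by $\alpha_N(z)\in\mc RU(\mf g)$ and decomposes $g(z)-\alpha_N(z)$ as an element of $\mc RI+z^{-N-1}\mc RU(\mf g)$; you equivalently approximate $d(z)=\alpha(z)-g(z)$ by $d_N(z)$ and decompose $d_N(z)$ the same way, then both arguments use right-invariance of $\mc RI$ under $\mf g_{\geq\frac12}$ (Lemma~\ref{0303:lem4}(a)) and $\bigcap_N z^{-N-1}\mc RM=0$ to conclude.
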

\begin{proof}
First, note that conditions (b) and (c) are equivalent by definition of the Rees $W$-algebra $\mc RW(\mf g,f)$.
Let $N\geq0$, and let $\alpha_N(z)\in\mc RU(\mf g)$ satisfy condition \eqref{0413:eq2}.
Then,
$$
(g(z)-\alpha_N(z))\bar1=(\alpha(z)-\alpha_N(z))\bar1\,\in z^{-N-1}\mc RM
=z^{-N-1}\mc RU(\mf g)\bar1
\,,
$$
which means that
\begin{equation}\label{0413:eq3}
g(z)-\alpha_N(z)
=
i_N(z)+z^{-N-1}r_N(z)\,\in\mc RI+z^{-N-1}\mc RU(\mf g)
\subset\mc RU(\mf g)
\,.
\end{equation}
But then, we have, for $a\in\mf g_{\geq\frac12}$,
\begin{equation}\label{0413:eq4}
[a,\alpha(z)]\bar1
=
a\alpha(z)\bar1-\alpha(z)a\bar1
=
[a,g(z)]\bar1
+(g(z)-\alpha_N(z))a\bar1
+(\alpha_N(z)-\alpha(z))a\bar1
\,.
\end{equation}
By \eqref{0413:eq2} the last term in the RHS of \eqref{0413:eq4} lies in $z^{-N-1}\mc RM$.
Moreover, by \eqref{0413:eq3}, and by the fact that $\mc RI$ is preserves by right multiplication
by elements in $\mf g_{\geq\frac12}$ (cf. Lemma \ref{0303:lem4}(a)),
also the second term in the RHS of \eqref{0413:eq4} lies in $z^{-N-1}\mc RM$.
Hence,  \eqref{0413:eq4} gives
$$
[a,\alpha(z)]\bar1
\equiv
[a,g(z)]\bar1
\,\mod z^{-N-1}\mc RM
\,.
$$
Since this holds for every $N\geq0$,
we conclude that $[a,\alpha(z)]\bar1=[a,g(z)]\bar1$.
The claim follows.
\end{proof}

\section{The Main Lemma}
\label{sec:main-lemma}

\subsection{Statement of the Main Lemma}
\label{sec:4.5}

Consider the matrix $E$ defined in \eqref{eq:E},
which, in the notation of Section \ref{sec:4.1}, is
$E=\sum_{(ih),(jk)\in\mc T}e_{(jk)(ih)}E_{(ih)(jk)}$.
The $((ih)(jk))$-entry is $e_{(jk)(ih)}$, which is an element of conformal weight
(cf. \eqref{eq:adx}, \eqref{eq:xih})
\begin{equation}\label{eq:Delta}
\Delta(e_{(jk)(ih)})
=1+x(ih)-x(jk)
=1+\frac12(p_i-p_j)-(h-k)
\,.
\end{equation}
Recalling the definition \eqref{eq:reesb} of the Rees algebra,
$z^{-\Delta(e_{(jk)(ih)})}e_{(jk)(ih)}$ lies in $\mc RU(\mf g)$.
We shall then consider the matrix
\begin{equation}\label{0410:eq4}
z^{-\Delta}E
:=
\sum_{(ih),(jk)\in\mc T}
z^{-\Delta(e_{(jk)(ih)})}
e_{(jk)(ih)}E_{(ih)(jk)}
\,\in\Mat_{N\times N}\mc RU(\mf g)\,.
\end{equation}
The Main Lemma, on which the proofs of Theorems \ref{thm:main1} and \ref{thm:main2} are based,
is the following:
\begin{lemma}\label{lem:main}
The quasideterminant $|\id_N+z^{-\Delta}E|_{I_1J_1}$
exists in $\Mat_{r_1\times r_1}\mc  R_\infty U(\mf g)$, and the following identity holds:
\begin{equation}\label{0229:eq3}
|\id_N+z^{-\Delta}E|_{I_1J_1}\bar 1=z^{-p_1}|z\id_N+F+\pi_{\leq\frac12}E+D|_{I_1J_1}\bar 1
\,\in\Mat_{r_1\times r_1}\mc  RM
\,.
\end{equation}
\end{lemma}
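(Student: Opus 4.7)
The quasideterminant on the left exists by Corollary \ref{0410:prop3}: the augmentation $\epsilon$ kills every entry $z^{-\Delta(e_{(jk)(ih)})}e_{(jk)(ih)}$ with $\Delta\neq 0$, and by Lemma \ref{0410:lem6} the surviving entries form $F$, so $\epsilon(\id_N + z^{-\Delta}E) = \id_N + F$ and $\epsilon\bigl(J_1(\id_N + z^{-\Delta}E)^{-1}I_1\bigr) = J_1(\id_N+F)^{-1}I_1 = (-1)^{p_1-1}\id_{r_1}$, both invertible. For the identity itself I would first conjugate by the formal diagonal matrix $Z = \diag_{(ih)\in\mc T}(z^{-x(ih)})$. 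The weight identity $\Delta(e_{(jk)(ih)}) = 1 + x(ih) - x(jk)$ gives $\id_N + z^{-\Delta}E = Z(\id_N + z^{-1}E)Z^{-1}$; and since every index in $\mc I\cup\mc J$ has $|x|=(p_1-1)/2$, we have $J_1 Z = z^{(p_1-1)/2}J_1$ and $Z^{-1}I_1 = z^{(p_1-1)/2}I_1$. The corresponding scaling of the quasideterminant reduces \eqref{0229:eq3} to the identity
\[
|z\id_N + E|_{I_1 J_1}\bar 1 = |z\id_N + F + \pi_{\leq\frac12}E + D|_{I_1 J_1}\bar 1
\]
in $\Mat_{r_1 \times r_1}\mc RM$, interpreted in the appropriate completion of the localized Rees algebra.

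\textbf{Block formula and $\mc I$-rows.}
I would next apply Proposition \ref{0304:prop}, writing $|X|_{I_1 J_1} = X_{\mc I\mc J} - X_{\mc I\mc J^c}(X_{\mc I^c\mc J^c})^{-1}X_{\mc I^c\mc J}$ with $\mc I = \{(\alpha,1)\}_{\alpha\in[r_1]}$ and $\mc J = \{(\beta, p_1)\}_{\beta\in[r_1]}$. The $\mc I$-rows of $z\id_N + E$ and of $z\id_N + F + \pi_{\leq\frac12}E + D$ agree entry by entry: for $(\alpha,1)\in\mc I$ the maximality of $x(\alpha,1) = (p_1-1)/2$ forces $d_{(\alpha,1)} = 0$ (no pyramid box lies strictly to its right), while $F_{(\alpha,1),(jk)} = \delta_{\alpha j}\delta_{k,0} = 0$ (as $k\geq 1$), and $x(jk) \leq (p_1-1)/2 < p_1/2$ yields $(\pi_{\leq\frac12}E)_{(\alpha,1),(jk)} = e_{(jk),(\alpha,1)}$ for every $(jk)$. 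Hence the $X_{\mc I\mc J}$-term and the left factor $X_{\mc I\mc J^c}$ of the correction term already coincide on the two sides, so it suffices to establish
\[
(A_{\mc I^c\mc J^c})^{-1}A_{\mc I^c\mc J}\,\bar 1 = (B_{\mc I^c\mc J^c})^{-1}B_{\mc I^c\mc J}\,\bar 1
\]
after left-multiplication by the common $\mc I\mc J^c$-block, where $A = z\id_N+E$ and $B = z\id_N+F+\pi_{\leq\frac12}E+D$.

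\textbf{The $\mc I^c$-block and the main obstacle.}
In the $\mc I^c$-rows, $A$ and $B$ differ in two ways: every $\mf g_{\geq 1}$-entry $e_{(jk),(ih)}$ of $A$ is replaced by the scalar $(f|e_{(jk),(ih)}) = \delta_{ij}\delta_{h,k+1} = F_{(ih),(jk)}$ in $B$, and $B$ carries the extra diagonal $D$. The congruence $e_{(jk),(ih)} \equiv (f|e_{(jk),(ih)}) \pmod I$ from \eqref{0225:eq4} accounts for the first difference whenever the $\mf g_{\geq 1}$-factor acts directly on $\bar 1$, so the $D$-shift must emerge from the commutators produced as one pushes each such factor through the other entries of $(A_{\mc I^c\mc J^c})^{-1}$ to reach $\bar 1$. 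Concretely, each commutator of an $\mf g_{\geq 1}$-factor with a diagonal $z+\cdots$ entry contributes a $-1$, and summing over the pyramid positions strictly to the right of $(ih)$ recovers $d_{(ih)} = -\dim(\mf g_{\geq 1}e_{(ih)})$, reproducing $D$. The main obstacle is to carry out this commutator bookkeeping rigorously: since $\rho$ is not an algebra homomorphism, one cannot pull $\bar 1$ through the matrix inverse. The strategy would be to work inside $\mc R_\infty U(\mf g)$, use Corollary \ref{0411:cor} to approximate $(A_{\mc I^c\mc J^c})^{-1}$ by a genuine element of $\mc RU(\mf g)$ modulo $z^{-N-1}\mc RM$, verify the shift identity in each truncation, and pass to the limit as $N\to\infty$. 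This is precisely the multi-step computation of Sections \ref{step2}--\ref{step5}, and it exhibits $D$ as a purely quantum phenomenon with no counterpart in the classical affine setting of \cite{DSKV16b}.
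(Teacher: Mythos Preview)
Your existence argument and preliminary reductions are correct and match the paper's Steps~1--3: applying $\epsilon$ to $\id_N+z^{-\Delta}E$ yields $\id_N+F$, hence $\epsilon\bigl(J_1(\id_N+z^{-\Delta}E)^{-1}I_1\bigr)=(-1)^{p_1-1}\id_{r_1}$ (the paper reaches the same value by direct expansion, Lemma~\ref{0410:lem4}); the conjugation by $Z=X^{-1}$ and the observation that the $\mc I$-rows of the two matrices agree reduce the identity to the paper's equation~\eqref{0330:eq9}.

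The gap is in what follows. You correctly identify that the shift $D$ must emerge from commutators produced when $\mf g_{\geq1}$-entries are pushed through the inverse to reach $\bar1$, but you do not carry out this computation, and the strategy you propose---truncate via Corollary~\ref{0411:cor}, verify in each truncation, pass to the limit---is \emph{not} what the paper does, so your final sentence mischaracterizes Sections~\ref{step2}--\ref{step5}. The paper's argument is direct and takes place entirely in $\mc RM$: for each pair $(ih),(jk)$ with $x(jk)-x(ih)\geq1$ it sets
\[
X_{(ih)(jk)}=z^{-\Delta}\bigl(\pi_{\geq1}e_{(jk)(ih)}-(f|e_{(jk)(ih)})\bigr)A^{-1}v\,\bar1\in\Mat_{\mc J^c\times\mc J}\mc RM
\]
(notation of~\eqref{0330:eq8}), commutes the prefactor once through $A$ and $v$ to obtain a closed linear system~\eqref{key-eq} of the form $(\id+z^{-1}M)X=Y$, and solves it by the explicit ansatz~\eqref{key-sol}. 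The verification of that ansatz is where the specific form of $D$ enters, via the combinatorial identity
\[
d_{(jk)}-d_{(ih)}=\#\bigl\{(j_1k_1)\in\mc T:\ x(j_1k_1)-x(ih)\geq1,\ x(j_1k_1)-x(jk)\leq\tfrac12\bigr\}.
\]
Summing gives $BA^{-1}v\bar1=w\bar1$ (Corollary~\ref{lem:key3}), and~\eqref{0330:eq9} follows from $(\id+BA^{-1})v\bar1=(v+w)\bar1$ after applying $A^{-1}(\id+BA^{-1})^{-1}=(A+B)^{-1}$. No truncation or limiting procedure is used, and your heuristic that ``each commutator with a diagonal $z+\cdots$ entry contributes a $-1$'' is too coarse to organize the actual bookkeeping.

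One technical wobble in your reduction: once you pass from $\id_N+z^{-\Delta}E$ to $z\id_N+E$ the individual entries no longer lie in $\mc RU(\mf g)$, so the meaning of the quasideterminant and of the action on $\bar1$ becomes unclear; the paper avoids this by keeping the $z^{-\Delta}$-weighting throughout (see~\eqref{0330:eq5}--\eqref{0330:eq8}).
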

\begin{example}\label{0304:ex}
Before proving Lemma \ref{lem:main}, we show, in all detail, 
why equation \eqref{0229:eq3} holds for $N=2$.
(The reader can have fun checking it for $N=3$ and principal or minimal nilpotent $f$,
or in other examples.)
Consider the Lie algebra $\mf{gl}_2$
and its $\mf{sl}_2$-triple $\{e,h=2x,f\}$, with
$$
e=
\left(\begin{array}{ll} 0&1 \\ 0&0 \end{array}\right)=e_{12}
\,\,,\,\,\,\,
f=
\left(\begin{array}{ll} 0&0 \\ 1&0 \end{array}\right)=e_{21}
\,\,\text{ and }\,\,
x=\diag(\frac12,-\frac12)
\,.
$$
The corresponding partition is $2$, associated to the pyramid 
\framebox[3\width]{2}\framebox[3\width]{1},
hence the ``shift'' matrix \eqref{0226:eq5} is
\begin{equation}\label{0304:eq7}
D=\diag(0,-1)
\,.
\end{equation}
The $\frac12\mb Z$-grading \eqref{eq:grading} of $\mf g$ is
$\mf g=\mf g_{-1}\oplus\mf g_0\oplus\mf g_1$, where
$\mf g_{-1}=\mb Ff$,
$\mf g_0=\mb Fe_{11}\oplus\mb Fe_{22}$,
and 
$\mf g_1=\mb Fe$.
Hence, 
we have 
$$
z^{-\Delta}E=\left(\begin{array}{ll}
z^{-1}e_{11} & z^{-2}f \\
e & z^{-1}e_{22}
\end{array}\right)
\,,
$$
and the quasideterminant in the LHS of \eqref{0229:eq3} corresponds, in this example,
to the $(12)$-quasideterminant
\begin{equation}\label{0304:eq9}
|\id_2+z^{-\Delta}E|_{I_1J_1}
=
z^{-2}f
-(1+z^{-1}e_{11})e^{-1}(1+z^{-1}e_{22})
\,.
\end{equation}
Note that, since $(f|e)=1$, we have $e-1\in F_0I\subset\mc RI$.
Hence, $e\in 1+\mc RI$, and the RHS of \eqref{0304:eq9}
lies in the localized Rees algebra $\mc R_1U(\mf g)\subset\mc R_\infty U(\mf g)$,
as claimed in Lemma \ref{lem:main}.
Before applying \eqref{0304:eq9} to $\bar1\in\mc RM$,
we rewrite \eqref{0304:eq9} as a right fraction.
Since $[e,e_{22}]=e$, we have
$$
e^{-1}e_{22}
=
(e_{22}-1)e^{-1}
\,.
$$
Hence, \eqref{0304:eq9} can be rewritten as
$$
|\id_2+z^{-\Delta}E|_{I_1J_1}
=
z^{-2}f
-(1+z^{-1}e_{11})(1+z^{-1}(e_{22}-1))e^{-1}
\,.
$$
We can now apply this to the cyclic element $\bar 1$ of $\mc RM$, 
(and use the fact that $e\bar1=\bar1$), to get
\begin{equation}\label{0304:eq10}
|\id_2+z^{-\Delta}E|_{I_1J_1}\bar 1
=
z^{-2}f
-(1+z^{-1}e_{11})(1+z^{-1}(e_{22}-1))\bar1
\,.
\end{equation}
On the other hand, we have, by \eqref{0304:eq7},
$$
z\id_2+F+\pi_{\leq\frac12}E+D
=
\left(\begin{array}{cc}
e_{11}+z & f \\
1 & e_{22}+z-1
\end{array}\right)
\,.
$$
Computing its $(12)$-quasideterminant, we get
$$
|z\id_2+F+\pi_{\leq\frac12}E+D|_{I_1J_1}
=
f-(z+e_{11})(z+e_{22}-1)
\,.
$$
Hence, $z^{-2}|z\id_2+F+\pi_{\leq\frac12}E+D|_{I_1J_1}\bar1$ is equal to \eqref{0304:eq10},
as claimed by Lemma \ref{lem:main}.
\end{example}

The remainder of Section \ref{sec:main-lemma}
will be dedicated to the proof of Lemma \ref{lem:main}.

\subsection{Step 1: Existence of the quasideterminant $|\id_N+z^{-\Delta}E|_{I_1J_1}$}\label{sec:step1}

\begin{lemma}\label{0410:lem3}
The matrix $\id_N+z^{-\Delta}E$ is invertible in $\Mat_{N\times N}\mc RU(\mf g)$.
\end{lemma}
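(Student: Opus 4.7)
The plan is to construct the inverse explicitly as a geometric series
$$
(\id_N+z^{-\Delta}E)^{-1}=\sum_{n=0}^{\infty}(-z^{-\Delta}E)^n,
$$
and verify that this series is well defined as an element of $\Mat_{N\times N}\mc RU(\mf g)$. There is no obvious ``leading term'' in negative powers of $z^{1/2}$ here, since $\Delta(e_{(jk)(ih)})=1+x(ih)-x(jk)$ can be negative (corresponding to elementary matrices in $\mf g_{\geq 1}$), and therefore individual entries of $z^{-\Delta}E$ may contain positive powers of $z$. So convergence is not automatic; one must do the bookkeeping carefully.

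First I would check that each entry of $z^{-\Delta}E$ lies in $\mc RU(\mf g)$: the entry $z^{-\Delta(e_{(jk)(ih)})}e_{(jk)(ih)}$ has coefficient of $z^n$ equal to $e_{(jk)(ih)}\in F_{\Delta(e_{(jk)(ih)})}U(\mf g)=F_{-n}U(\mf g)$, as required by \eqref{eq:rees-alg}. Next, I would examine a typical term appearing in $(z^{-\Delta}E)^n$: it corresponds to a chain $(i_0h_0)\to(i_1h_1)\to\cdots\to(i_nh_n)$ in $\mc T$, and produces the monomial
$$
z^{-\sum_{\alpha=1}^n\Delta(e_{(i_\alpha h_\alpha)(i_{\alpha-1}h_{\alpha-1})})}\,e_{(i_nh_n)(i_{n-1}h_{n-1})}\cdots e_{(i_1h_1)(i_0h_0)}
$$
in the $((i_0h_0),(i_nh_n))$-entry. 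The key observation is that the exponents telescope:
$$
\sum_{\alpha=1}^n\Delta(e_{(i_\alpha h_\alpha)(i_{\alpha-1}h_{\alpha-1})})
=\sum_{\alpha=1}^n\bigl(1+x(i_{\alpha-1}h_{\alpha-1})-x(i_\alpha h_\alpha)\bigr)
=n+x(i_0h_0)-x(i_nh_n).
$$

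Since $x$ takes only finitely many values in the interval $[-\tfrac{p_1-1}{2},\tfrac{p_1-1}{2}]$, the power of $z$ is $-n+O(1)$, and in particular tends to $-\infty$ as $n\to\infty$. Therefore, for each fixed $m\in\tfrac12\mb Z$, only finitely many pairs $(n,\text{chain})$ contribute to the coefficient of $z^m$ in any given entry of the formal sum, so the series is a genuine element of $\Mat_{N\times N}U(\mf g)((z^{-1/2}))$. Moreover, for a summand contributing to $z^m$, the associated universal enveloping algebra monomial has conformal weight equal to the above telescoped sum, namely $-m$, so it lies in $F_{-m}U(\mf g)$; hence the sum lies in $\Mat_{N\times N}\mc RU(\mf g)$. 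Finally, the usual algebraic identity
$$
(\id_N+z^{-\Delta}E)\sum_{n=0}^\infty(-z^{-\Delta}E)^n=\id_N=\sum_{n=0}^\infty(-z^{-\Delta}E)^n(\id_N+z^{-\Delta}E)
$$
holds termwise and proves invertibility. The main (mild) obstacle is precisely the presence of positive powers of $z$ among the entries; the telescoping computation above is what makes the geometric series nonetheless converge in the Rees algebra.
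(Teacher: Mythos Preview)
Your proof is correct and follows essentially the same approach as the paper: both arguments expand the inverse as a geometric series and verify via the telescoping identity $\sum_{\alpha=1}^n\Delta(e_{(i_\alpha h_\alpha)(i_{\alpha-1}h_{\alpha-1})})=n+x(i_0h_0)-x(i_nh_n)$ that each summand lies in $\mc RU(\mf g)$ and that the series converges. The only cosmetic difference is that the paper first rewrites $\id_N+z^{-\Delta}E=z^{-1}X^{-1}(z\id_N+E)X$ via conjugation by the diagonal matrix $X=\diag(z^{x(ih)})$, which makes invertibility in $U(\mf g)((z^{-1/2}))$ immediate and encodes your telescoping computation into the conjugation; also note that in your displayed monomial the factors should appear in the order $e_{(i_1h_1)(i_0h_0)}\cdots e_{(i_nh_n)(i_{n-1}h_{n-1})}$, though this does not affect the conformal-weight count.
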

\begin{proof}
Consider the diagonal matrix (cf. \eqref{eq:xih} for notation)
\begin{equation}\label{eq:X}
X=\diag(z^{x(ih)})_{(ih)\in\mc T}
=\sum_{(ih)\in\mc T}z^{x(ih)}E_{(ih)(ih)}
\,.
\end{equation}
It is clearly an invertible element of the algebra $\Mat_{N\times N}U(\mf g)((z^{-\frac12}))$.
Moreover, recalling \eqref{0410:eq4}, we have the following identity
(in the algebra $\Mat_{N\times N}U(\mf g)((z^{-\frac12}))$):
\begin{equation}\label{0410:eq5}
\id_N+z^{-\Delta}E
=
z^{-1}X^{-1}(z\id_N+E)X
\,.
\end{equation}
On the other hand, the matrix $z\id_N+E$ is invertible,
by geometric series expansion, in $\Mat_{N\times N}U(\mf g)((z^{-\frac12}))$.
It follows by \eqref{0410:eq5} that also $\id_N+z^{-\Delta}E$
is invertible in the algebra $\Mat_{N\times N}U(\mf g)((z^{-\frac12}))$.
We need to prove that the entries of the inverse matrix $(\id_N+z^{-\Delta}E)^{-1}$
actually lie in the Rees algebra $\mc RU(\mf g)$.
The inverse matrix $(\id_N+z^{-\Delta}E)^{-1}$
is easily computed by \eqref{0410:eq5} and geometric series expansion:
\begin{equation}\label{0410:eq6}
\begin{array}{l}
\displaystyle{
\vphantom{\Big(}
(\id_N+z^{-\Delta}E)^{-1}
=
zX^{-1}(z\id_N+E)^{-1}X
=
\sum_{\ell=0}^\infty(-1)^\ell z^{-\ell}
X^{-1}E^\ell X
} \\
\displaystyle{
\vphantom{\Big(}
=
\sum_{\ell=0}^\infty
\sum_{(i_0h_0),\dots,(i_\ell h_\ell)\in\mc T}
(-1)^\ell
z^{-\ell-x(i_0h_0)+x(i_\ell h_\ell)}
} \\
\displaystyle{
\vphantom{\Big(}
\,\,\,\,\,\,\,\,\,\,\,\,\,\,\,\,\,\,\,\,\,\,\,\,\,\,\,
e_{(i_1h_1)(i_0h_0)}\dots e_{(i_\ell h_\ell)(i_{\ell-1}h_{\ell-1})}
E_{(i_0h_0)(i_\ell h_\ell)}
\,.}
\end{array}
\end{equation}
The monomial $e_{(i_1h_1)(i_0h_0)}\dots e_{(i_\ell h_\ell)(i_{\ell-1}h_{\ell-1})}$
has conformal weight
$$
\Delta=
\sum_{r=1}^\ell(1+x(i_{r-1}h_{r-1})-x(i_rh_r))=\ell+x(i_0h_0)-x(i_\ell h_\ell)
\,.
$$
Therefore, 
$z^{-\ell-x(i_0h_0)+x(i_\ell h_\ell)}
e_{(i_1h_1)(i_0h_0)}\dots e_{(i_\ell h_\ell)(i_{\ell-1}h_{\ell-1})}$
lies in the Rees algebra $\mc RU(\mf g)$.
\end{proof}
\begin{lemma}\label{0410:lem4}
Applying the homomorphism $\epsilon:\,\mc RU(\mf g)\to\mb F$ defined in \eqref{0406:eq1}
to the entries of the matrix $J_1(\id_N+z^{-\Delta}E)^{-1}I_1\in\Mat_{r_1\times r_1}\mc RU(\mf g)$,
we get
\begin{equation}\label{0410:eq7}
\epsilon\big(J_1(\id_N+z^{-\Delta}E)^{-1}I_1\big)
=
(-1)^{p_1-1}\id_{r_1}
\,.
\end{equation}
\end{lemma}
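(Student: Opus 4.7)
The plan is to compute $\epsilon$ of each entry $\big(J_1(\id_N+z^{-\Delta}E)^{-1}I_1\big)_{ij}$ by directly using the explicit expansion \eqref{0410:eq6} already derived in Lemma \ref{0410:lem3}. By definition \eqref{0406:eq1}, $\epsilon$ extracts the $z^0$-coefficient (which lies in $F_0U(\mf g)$) and applies $\epsilon_0$. So first I would identify which terms of the series \eqref{0410:eq6} contribute to the $z^0$-coefficient of the $(ij)$-entry of $J_1(\id_N+z^{-\Delta}E)^{-1}I_1$.

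The projection by $J_1$ on the left and $I_1$ on the right in \eqref{0410:eq6} forces $(i_0h_0)=(ip_1)$ and $(i_\ell h_\ell)=(j1)$, for $i,j\in\{1,\dots,r_1\}$. Since $i,j\le r_1$ we have $p_i=p_j=p_1$, so $x(ip_1)=-\tfrac12(p_1-1)$ and $x(j1)=\tfrac12(p_1-1)$, giving the power of $z$ in the corresponding summand equal to $-\ell-x(ip_1)+x(j1)=-\ell+(p_1-1)$. Hence the $z^0$-contribution comes exclusively from $\ell=p_1-1$, and equals
\[
(-1)^{p_1-1}\sum_{\substack{(i_0h_0)=(ip_1),\,(i_{p_1-1}h_{p_1-1})=(j1) \\ (i_rh_r)\in\mc T}}
e_{(i_1h_1)(ip_1)}\,e_{(i_2h_2)(i_1h_1)}\cdots e_{(j1)(i_{p_1-2}h_{p_1-2})}\,\in F_0U(\mf g).
\]

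Next I would apply $\epsilon_0$ via Lemma \ref{0410:lem6}, yielding
\[
\epsilon_0\big((\,\cdot\,)_{ij}\big)
=
(-1)^{p_1-1}\!\!\sum_{\text{paths}}\!\big(f\,\big|\,e_{(i_1h_1)(ip_1)}\big)\cdots\big(f\,\big|\,e_{(j1)(i_{p_1-2}h_{p_1-2})}\big).
\]
A direct computation from $f=\sum_{(m,n):\,n<p_m}e_{(m,n+1)(m,n)}$ and the trace form gives $(f\,|\,e_{(i_rh_r)(i_{r-1}h_{r-1})})=\delta_{i_{r-1},i_r}\delta_{h_{r-1},h_r+1}[h_r<p_{i_r}]$. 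Therefore a path contributes $1$ only if the first index $i_r$ stays constant and the second index drops by $1$ at each step.

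Starting from $(i_0h_0)=(ip_1)$ and forcing such a descending path of length $p_1-1$ produces $h_r=p_1-r$ and $i_0=i_1=\cdots=i_{p_1-1}$; the endpoint condition $(i_{p_1-1}h_{p_1-1})=(j1)$ then forces $i=j$, and there is exactly one such path. Hence $\epsilon_0$ of the $(ij)$-entry equals $(-1)^{p_1-1}\delta_{ij}$, which is \eqref{0410:eq7}. The computation is essentially combinatorial — the only mildly delicate point is matching the $z$-exponent accounting with the Kazhdan-weight condition $a_0\in F_0U(\mf g)$, but this is automatic here since both constraints single out $\ell=p_1-1$.
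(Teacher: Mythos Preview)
Your proof is correct and follows essentially the same argument as the paper: both extract the $(ij)$-entry from the expansion \eqref{0410:eq6} with $(i_0h_0)=(ip_1)$ and $(i_\ell h_\ell)=(j1)$, identify the $z^0$-coefficient as the term with $\ell=p_1-1$, apply Lemma~\ref{0410:lem6} to replace each factor by its pairing with $f$, and then use $(f\,|\,e_{(jk)(ih)})=\delta_{ij}\delta_{h,k+1}$ to see that the unique surviving path forces $i=j$.
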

\begin{proof}
We already know, from Lemma \ref{0410:lem3},
that $J_1(\id_N+z^{-\Delta}E)^{-1}I_1$ admits an expansion
of the form $\sum_{n=-\infty}^{N}z^nA_n$, for some $N\in\mb Z$ and $A_n\in\Mat_{r_1\times r_1}F_{-n}U(\mf g)$.
Recalling the definition \eqref{eq:factor1} of the matrices $J_1$ and $I_1$,
we get from \eqref{0410:eq6} that the $(ij)$-entry ($1\leq i,j\leq r_1$) of 
the matrix $J_1(\id_N+z^{-\Delta}E)^{-1}I_1$ is
\begin{equation}\label{0410:eq8}
\begin{array}{l}
\displaystyle{
\vphantom{\Big(}
(J_1(\id_N+z^{-\Delta}E)^{-1}I_1)_{ij}
=
((\id_N+z^{-\Delta}E)^{-1})_{(ip_1)(j1)}
} \\
\displaystyle{
\vphantom{\Big(}
=
\sum_{\ell=0}^\infty
\sum_{\substack{(i_0h_0),\dots,(i_\ell h_\ell)\in\mc T\\ (i_0h_0)=(ip_1),\,(i_\ell h_\ell)=(j1)}}
(-1)^\ell
z^{-\ell-x(ip_1)+x(j1)}
e_{(i_1h_1)(i_0h_0)}\dots e_{(i_\ell h_\ell)(i_{\ell-1}h_{\ell-1})}
\,.}
\end{array}
\end{equation}
Note that, for $i,j\in\{1,\dots,r_1\}$, we have (cf. \eqref{eq:xih})
$x(j1)-x(ip_1)=p_1-1$.
Hence, the largest power of $z$ appearing in the expansion \eqref{0410:eq8} is $z^{p_1-1}$.
In order to prove \eqref{0410:eq7},
we need to compute the term with $\ell=p_1-1$ in \eqref{0410:eq8}.
It is
\begin{equation}\label{0410:eq9}
(A_0)_{ij}=
(-1)^{p_1-1}
\sum_{(i_1h_1),\dots,(i_{p_1-2} h_{p_1-2})\in\mc T}
e_{(i_1h_1)(ip_1)}e_{(i_2h_2)(i_1h_1)}\dots e_{(j1)(i_{p_1-2}h_{p_1-2})}
\,.
\end{equation}
We already know by Lemma \ref{0410:lem3} 
that the expression in the RHS of \eqref{0410:eq9} lies in $F_0U(\mf g)=\mb F\oplus F_0I$
(cf. Lemma \ref{0316:lem1}(b)).
To compute the value of $\epsilon$, we apply Lemma \ref{0410:lem6}:
\begin{equation}\label{0410:eq11}
\epsilon((A_0)_{ij})=
(-1)^{p_1-1}
\!\!\!\!\!\!\!\!\!\!\!\!\!\!\!\!\!\!
\sum_{(i_1h_1),\dots,(i_{p_1-2} h_{p_1-2})\in\mc T}
\!\!\!\!\!\!\!\!\!\!\!\!\!\!\!\!\!\!
(f|e_{(i_1h_1)(ip_1)})(f|e_{(i_2h_2)(i_1h_1)})\dots (f|e_{(j1)(i_{p_1-2}h_{p_1-2})})
\,.
\end{equation}
Recalling that $(f|e_{(jk)(ih)})=\delta_{ij}\delta_{h,k+1}$,
the only non vanishing term in the RHS of \eqref{0410:eq11}
is when $i=i_1=\dots=i_{p_1-2}=j$,
and $h_1=p_1-1$, $h_2=p_1-2$, $\dots$, $h_{p_1-2}=2$.
Hence, \eqref{0410:eq11} gives $\epsilon((A_0)_{ij})=(-1)^{p_1-1}\delta_{ij}$, 
completing the proof.
\end{proof}
\begin{lemma}\label{0410:lem5}
The matrix $J_1(\id_N+z^{-\Delta}E)^{-1}I_1\in\Mat_{r_1\times r_1}\mc RU(\mf g)$ 
is invertible in the matrix algebra $\Mat_{r_1\times r_1}\mc R_\infty U(\mf g)$.
\end{lemma}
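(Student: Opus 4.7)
The plan is to deduce this invertibility directly from the two immediately preceding lemmas together with Corollary \ref{0410:prop3}. First, Lemma \ref{0410:lem3} guarantees that $(\id_N+z^{-\Delta}E)^{-1}$ already exists in $\Mat_{N\times N}\mc RU(\mf g)$, so the sandwich $J_1(\id_N+z^{-\Delta}E)^{-1}I_1$ is a well-defined element of $\Mat_{r_1\times r_1}\mc RU(\mf g)\subset\Mat_{r_1\times r_1}\mc R_\infty U(\mf g)$; this is what makes the statement even sensible.

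Next, I would apply the algebra homomorphism $\epsilon:\mc R_\infty U(\mf g)\to\mb F$ entrywise. By Lemma \ref{0410:lem4} we have
\[
\epsilon\bigl(J_1(\id_N+z^{-\Delta}E)^{-1}I_1\bigr)=(-1)^{p_1-1}\id_{r_1}\,\in\Mat_{r_1\times r_1}\mb F,
\]
which is manifestly a non-degenerate (in fact scalar $\pm\id_{r_1}$) matrix over the base field.

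Finally, Corollary \ref{0410:prop3} asserts that a square matrix $A(z)\in\Mat_{r_1\times r_1}\mc R_\infty U(\mf g)$ is invertible if and only if $\epsilon(A(z))\in\Mat_{r_1\times r_1}\mb F$ is non-degenerate. Applying this criterion to $A(z)=J_1(\id_N+z^{-\Delta}E)^{-1}I_1$ yields the desired invertibility in $\Mat_{r_1\times r_1}\mc R_\infty U(\mf g)$, completing the proof. There is no real obstacle here: the work has been front-loaded into Lemmas \ref{0410:lem3}, \ref{0410:lem4} and Corollary \ref{0410:prop3}, and the present lemma is essentially a one-line application combining them. The only thing to be careful about is that $\epsilon$ on matrices means entrywise application, so that the computation of Lemma \ref{0410:lem4} lands precisely in the hypothesis of Corollary \ref{0410:prop3}.
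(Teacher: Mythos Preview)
Your proof is correct and follows essentially the same approach as the paper's own proof, which simply cites Corollary \ref{0410:prop3} and Lemma \ref{0410:lem4}. Your version merely spells out the logic in slightly more detail (including the implicit use of Lemma \ref{0410:lem3} for well-definedness).
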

\begin{proof}
It follows by Corollary \ref{0410:prop3} and Lemma \ref{0410:lem4}.
\end{proof}
\begin{proposition}\label{0410:prop4}
The quasideterminant 
$|\id_N+z^{-\Delta}E|_{I_1J_1}$ exists in the matrix algebra 
$\Mat_{r_1\times r_1}\mc R_\infty U(\mf g)$.
\end{proposition}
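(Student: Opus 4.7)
The statement is essentially a bookkeeping corollary of the three preceding lemmas, so my plan is simply to assemble them against Definition \ref{def:gen-quasidet}. Unpacking that definition, the quasideterminant
$$
|\id_N + z^{-\Delta}E|_{I_1J_1} = \big(J_1(\id_N+z^{-\Delta}E)^{-1}I_1\big)^{-1}
$$
exists in $\Mat_{r_1\times r_1}\mc R_\infty U(\mf g)$ precisely when (i) $\id_N + z^{-\Delta}E$ is invertible in $\Mat_{N\times N}\mc R_\infty U(\mf g)$, and (ii) the compressed matrix $J_1(\id_N+z^{-\Delta}E)^{-1}I_1$ is invertible in $\Mat_{r_1\times r_1}\mc R_\infty U(\mf g)$. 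Both invertibility statements have already been proved in the run-up to this proposition.

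Condition (i) is Lemma \ref{0410:lem3}, which in fact gives something slightly stronger: the inverse already lies in $\Mat_{N\times N}\mc RU(\mf g) \subset \Mat_{N\times N}\mc R_\infty U(\mf g)$, so no passage to the localization is needed for the first inversion. Consequently $J_1(\id_N+z^{-\Delta}E)^{-1}I_1$ lies a priori in $\Mat_{r_1\times r_1}\mc RU(\mf g)$. Condition (ii) is then exactly Lemma \ref{0410:lem5}, whose proof in turn combines Lemma \ref{0410:lem4} (which computes that applying $\epsilon$ entrywise produces the non-degenerate scalar matrix $(-1)^{p_1-1}\id_{r_1}$) with the matrix version of the invertibility criterion for $\mc R_\infty U(\mf g)$, namely Corollary \ref{0410:prop3}.

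There is no serious obstacle left at this stage; the genuine work has been discharged earlier. The delicate ingredients were the geometric-series computation behind Lemma \ref{0410:lem4} — which relies on the fact that $(f|e_{(jk)(ih)}) = \delta_{ij}\delta_{h,k+1}$ forces a unique path contributing to the top $z$-power — together with the inductive construction of $\mc R_\infty U(\mf g)$ in Section \ref{sec:5.4}, which is exactly what allows one to invert an element whose image under $\epsilon$ is non-zero but whose naive geometric expansion in $\mc RU(\mf g)$ would involve unboundedly many positive powers of $z^{1/2}$.
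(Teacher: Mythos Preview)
Your proof is correct and follows exactly the paper's approach: the paper's own proof is the single sentence ``It is an immediate consequence of Lemmas \ref{0410:lem3} and \ref{0410:lem5},'' and you have simply unpacked this by verifying the two conditions in Definition \ref{def:gen-quasidet} against those lemmas. Your additional remarks about Lemma \ref{0410:lem4} and Corollary \ref{0410:prop3} are accurate but already subsumed in the proof of Lemma \ref{0410:lem5}.
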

\begin{proof}
It is an immediate consequence of Lemmas \ref{0410:lem3} and \ref{0410:lem5}.
\end{proof}

\subsection{Step 2: Setup and notation}\label{step2}

The matrices $I_1$ and $J_1$ in \eqref{eq:factor1}
are as in \eqref{0304:eq4}, where $\mc I,\mc J$ are the following subsets of $\mc T$,
\begin{equation}\label{0330:eq1}
\mc I=\big\{(i1)\,\big|\,1\leq i\leq r_1\big\}
\,\,,\,\,\,\,
\mc J=\big\{(ip_1)\,\big|\,1\leq i\leq r_1\big\}
\,,
\end{equation}
of the same cardinality $|\mc I|=|\mc J|=r_1$.
The complementary subsets are
$$
\mc I^c=\mc I^c_1\sqcup\mc K
\,\,,\,\,\,\,
\mc J^c=\mc J^c_1\sqcup\mc K
\,,
$$
where
\begin{equation}\label{0330:eq2}
\mc I^c_1
=
\big\{(ih)\,\big|\,1\leq i\leq r_1,\,2\leq h\leq p_1\big\}
\,\,,\,\,\,\,
\mc J^c_1
=
\big\{(ih)\,\big|\,1\leq i\leq r_1,\,1\leq h\leq p_1-1\big\}
\,,
\end{equation}
and
\begin{equation}\label{0330:eq3}
\mc K
=
\big\{(ih)\,\big|\, r_1<i\leq r,\,1\leq h\leq p_i\big\}
\,.
\end{equation}
There is a natural bijection between 
the sets $\mc I_1^c$ and $\mc J_1^c$ (hence 
between $\mc I^c$ and $\mc J^c$),
given by
\begin{equation}\label{0330:eq4}
\mc I_1^c
\stackrel{\sim}{\longrightarrow}
\mc J_1^c
\,\,,\,\,\,\,
(ih)\mapsto(i,h-1)
\,.
\end{equation}

\subsection{Step 3: Preliminary computations}\label{step3}

We can compute the quasideterminants
in the LHS and the RHS of equation \eqref{0229:eq3} 
applying Proposition \ref{0304:prop}.
For the quasideterminant in the LHS, we have
\begin{equation}\label{0330:eq5}
\begin{array}{l}
\displaystyle{
\vphantom{\Big(}
|\id_N+z^{-\Delta}E|_{I_1J_1}
=
z^{-p_1}E_{\mc I\mc J}
} \\
\displaystyle{
\vphantom{\Big(}
-
(\id_N+z^{-\Delta}E)_{\mc I\mc J^c}((\id_N+z^{-\Delta}E)_{\mc I^c\mc J^c})^{-1}(z\id_N+z^{-\Delta}E)_{\mc I^c\mc J}
\,.}
\end{array}
\end{equation}
We used the fact that $\mc I\cap\mc J=\emptyset$, so that $(\id_N)_{\mc I\mc J}=0$,
and that $\Delta(e_{(jp_1)(i1)})=p_1$ for every $(i1)\in\mc I$ and $(jp_1)\in\mc J$,
so that $z^{-\Delta}E_{\mc I\mc J}=z^{-p_1}E_{\mc I\mc J}$.
Similarly, we compute the quasideterminants
in the RHS of equation \eqref{0229:eq3}.
Recalling the definition \eqref{eq:X} of the matrix $X$
and equation \eqref{0410:eq5}, we have
\begin{equation}\label{0413:eq6}
X^{-1}EX=z^{1-\Delta}E
\,\,,\,\,\,\,
X^{-1}FX=zF
\,\,,\,\,\,\,
X^{-1}\id_NX=\id_N
\,\,,\,\,\,\,
X^{-1}DX=D
\,,
\end{equation}
from which we get the following identity:
\begin{equation}\label{0412:eq8}
z\id_N+F+\pi_{\leq\frac12}E+D
=
zX(\id_N+F+z^{-\Delta}\pi_{\leq\frac12}E+z^{-1}D)X^{-1}
\,.
\end{equation}
Recalling the definitions \eqref{eq:factor1} of the matrices $I_1$ and $J_1$,
we have
\begin{equation}\label{0412:eq9}
J_1X=z^{-\frac{p_1-1}{2}}J_1
\,\text{ and }\,
X^{-1}I_1=z^{-\frac{p_1-1}{2}}I_1
\,.
\end{equation}
Hence, using the definition \eqref{eq:gen-quasidet} of quasideterminant
and equations \eqref{0412:eq8} and \eqref{0412:eq9},
we get
$$
\begin{array}{l}
\displaystyle{
\vphantom{\Big(}
|z\id_N+F+\pi_{\leq\frac12}E+D|_{I_1J_1}
=
(J_1(z\id_N+F+\pi_{\leq\frac12}E+D)^{-1}I_1)^{-1}
=} \\
\displaystyle{
\vphantom{\Big(}
z^{p_1}
|\id_N+F+z^{-\Delta}\pi_{\leq\frac12}E+z^{-1}D|_{I_1J_1}
\,.}
\end{array}
$$
Applying Proposition \ref{0304:prop}, we thus get
\begin{equation}\label{0330:eq6}
\begin{array}{l}
\displaystyle{
\vphantom{\Big(}
z^{-p_1}|z\id_N+F+\pi_{\leq\frac12}E+D|_{I_1J_1}
=
z^{-p_1}E_{\mc I\mc J}
-
(\id_N+z^{-\Delta}E)_{\mc I\mc J^c}
} \\
\displaystyle{
\vphantom{\Big(}
\times(\id_N+F+z^{-\Delta}\pi_{\leq\frac12}E+z^{-1}D)_{\mc I^c\mc J^c})^{-1}
(\id_N+z^{-\Delta}E+z^{-1}D)_{\mc I^c\mc J}
\,.}
\end{array}
\end{equation}
In the RHS of \eqref{0330:eq6} we used 
the facts that 
$D_{\mc I\mc T}=0$ (since $d_{(i1)}=0$ for all $(ih)\in\mc I$),
$F_{\mc I\mc T}=0$, $F_{\mc T\mc J}=0$,
and $\pi_{\leq\frac12}E_{\mc I\mc T}=E_{\mc I\mc T}$, 
$\pi_{\leq\frac12}E_{\mc T\mc J}=E_{\mc T\mc J}$.
Comparing equations \eqref{0330:eq5} and \eqref{0330:eq6},
we reduce equation \eqref{0229:eq3} to the following
equation in $\Mat_{\mc J^c\times\mc J}\mc RM$:
\begin{equation}\label{0330:eq7}
\begin{array}{l}
\vphantom{\Big(}
\displaystyle{
((\id_N+z^{-\Delta}E)_{\mc I^c\mc J^c})^{-1}(z\id_N+z^{-\Delta}E)_{\mc I^c\mc J}
\bar1
} \\
\displaystyle{
\vphantom{\Big(}
=(\id_N+F+z^{-\Delta}\pi_{\leq\frac12}E+z^{-1}D)_{\mc I^c\mc J^c})^{-1}
(\id_N+z^{-\Delta}E+z^{-1}D)_{\mc I^c\mc J}
\bar1
\,,}
\end{array}
\end{equation}
which we need to prove.
To simplify notation, we introduce the matrices
$A,B\in\Mat_{\mc I^c\times\mc J^c}\mc RU(\mf g)$,
and $v,w\in\Mat_{\mc I^c\times\mc J}\mc RU(\mf g)$,
defined as follows
\begin{equation}\label{0330:eq8}
\begin{array}{l}
\vphantom{\Big(}
\displaystyle{
A:=
(\id_N+F+z^{-\Delta}\pi_{\leq\frac12}E+z^{-1}D)_{\mc I^c\mc J^c}
} \\
\vphantom{\Big(}
\displaystyle{
\,\,\,\,\,\,
=
\!\!\!\!
\sum_{(ih)\in\mc I^c,\,(jk)\in\mc J^c}
\!\!\!\!\!
\big(z^{-\Delta}\pi_{\leq\frac12}e_{(jk)(ih)}
+\delta_{ij}\delta_{h,k+1}+(1+z^{-1}d_{(ih)})\delta_{ij}\delta_{hk}\big)
E_{(ih)(jk)}
,} \\
\vphantom{\Big(}
\displaystyle{
B:=(\id_N+z^{-\Delta}E)_{\mc I^c\mc J^c}-A
} \\
\vphantom{\Big(}
\displaystyle{
\,\,\,\,\,\,\,
=
\!\!\!\!\!
\sum_{(ih)\in\mc I^c,\,(jk)\in\mc J^c}
\!\!\!\!\!\!\!\!\!\!
z^{-\Delta}\big(\pi_{\geq1}e_{(jk)(ih)}-(f|e_{(jk)(ih)})\big)
E_{(ih)(jk)}
-z^{-1}
\!\!\!\!\!\!\!
\sum_{(ih)\in\mc I^c\cap\mc J^c}
\!\!\!\!
d_{(ih)}
E_{(ih)(ih)}
,} \\
\vphantom{\Big(}
\displaystyle{
v:=
(\id_N+z^{-\Delta}E+z^{-1}D)_{\mc I^c\mc J}
} \\
\vphantom{\Big(}
\displaystyle{
\,\,\,\,\,\,\,
=
\sum_{(ih)\in\mc I^c}
\sum_{j=1}^{r_1}
\big(z^{-\Delta}e_{(jp_1)(ih)}+(1+z^{-1}d_{(ih)})\delta_{ij}\delta_{hp_1}\big)
E_{(ih)j}
\,,} \\
\vphantom{\Big(}
\displaystyle{
w:=
(\id_N+z^{-\Delta}E)_{\mc I^c\mc J}-v
=
-z^{-1}D_{\mc I^c\mc J}
=
-z^{-1}\sum_{j=1}^{r_1}
d_{(jp_1)}
E_{(jp_1)j}
\,.} 
\end{array}
\end{equation}
Here and further, when no confusion may arise,
we denote for simplicity (cf. \eqref{eq:Delta})
$$
z^{-\Delta}e_{(jk)(ih)}
:=
z^{-\Delta(e_{(jk)(ih)})}e_{(jk)(ih)}
\,.
$$
Using the notation \eqref{0330:eq8},
equation \eqref{0330:eq7} can be rewritten as follows
\begin{equation}\label{0330:eq9}
(A+B)^{-1}(v+w)\bar1
=
A^{-1}v\bar1
\,\in\Mat_{\mc J^c\times\mc J}\mc RM
\,.
\end{equation}

\subsection{Step 4: the key computation}\label{step4}

For every $(ih),(jk)\in\mc T$, denote
\begin{equation}\label{key-X}
X_{(ih)(jk)}
=
z^{-\Delta(e_{(jk)(ih)})}(\pi_{\geq1}e_{(jk)(ih)}-(f|e_{(jk)(ih)}))
A^{-1}v\bar1
\,\in\Mat_{\mc J^c\times\mc J}\mc RM
\,.
\end{equation}
Clearly, $X_{(ih)(jk)}=0$ unless 
\begin{equation}\label{0330:eq-ass}
x{(jk)}-x{(ih)}\geq1
\,.
\end{equation}
\begin{lemma}\label{lem:key}
For every $(ih),(jk)\in\mc T$ satisfying condition \eqref{0330:eq-ass},
we have
\begin{equation}\label{key-eq}
\begin{array}{l}
\displaystyle{
\vphantom{\Big(}
X_{(ih)(jk)}
+z^{-1}\delta_{(ih)\in\mc I^c\cap\mc J^c}
\!\!\!
\sum_{\substack{(i_1h_1)\in\mc I^c \\ (x{(ih)}-x{(i_1h_1)}\leq\frac12)}}
\!\!\!
A^{-1}
E_{(i_1h_1)(ih)}
X_{(i_1h_1)(jk)}
} \\
\displaystyle{
\vphantom{\Big(}
-z^{-1}\delta_{(jk)\in\mc I^c\cap\mc J^c}
\!\!\!
\sum_{\substack{(j_1k_1)\in\mc J^c \\ (x{(j_1k_1)}-x{(jk)}\leq\frac12)}}
\!\!\!
A^{-1}
E_{(jk)(j_1k_1)}
X_{(ih)(j_1k_1)}
} \\
\displaystyle{
\vphantom{\Big(}
=
-z^{-1}\delta_{(ih)\in\mc I^c\cap\mc J^c}
E_{(jk)(ih)}
A^{-1}v\bar1
+
z^{-1}\delta_{(ih)\in\mc J}
E_{(jk)i}\bar1
} \\
\displaystyle{
\vphantom{\Big(}
+
z^{-2}(d_{(jk)}-d_{(ih)})
\delta_{(ih)\in\mc I^c\cap\mc J^c}
\delta_{(jk)\in\mc I^c\cap\mc J^c}
A^{-1}
E_{(jk)(ih)}
A^{-1}v\bar1
} \\
\displaystyle{
\vphantom{\Big(}
-
z^{-2}(d_{(jk)}-d_{(ip_1)})
\delta_{(ih)\in\mc J}
\delta_{(jk)\in\mc I^c\cap\mc J^c}
A^{-1}
E_{(jk)i}
\bar1
\,.}
\end{array}
\end{equation}
\end{lemma}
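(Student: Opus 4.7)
The crucial observation is that, under the assumption $x(jk) - x(ih) \geq 1$, the element $m := \pi_{\geq 1}e_{(jk)(ih)} - (f|e_{(jk)(ih)}) = e_{(jk)(ih)} - (f|e_{(jk)(ih)})$ lies in the left ideal $I \subset U(\mf g)$, so $m\bar 1 = 0$ in $M$. Since $m$ belongs to $U(\mf g)$ while $A^{-1}v$ has entries in $\mc R_\infty U(\mf g)$ acting on $\mc RM$, one may move $m$ through the fraction at the cost of a commutator, yielding
$$
X_{(ih)(jk)} = z^{-\Delta_m}\, m \cdot A^{-1}v\,\bar 1 = z^{-\Delta_m}\, [m, A^{-1}v]\,\bar 1,
$$
where $\Delta_m := \Delta(e_{(jk)(ih)})$. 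Applying the Leibniz rule for $\ad m$ together with identity \eqref{20160402:eq1} entry-wise, this becomes
$$
X_{(ih)(jk)} = -z^{-\Delta_m}\, A^{-1}[m,A]A^{-1}v\,\bar 1 + z^{-\Delta_m}\, A^{-1}[m,v]\,\bar 1,
$$
reducing the lemma to an explicit commutator computation.

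Since $m$ equals $e_{(jk)(ih)}$ up to a scalar, and since the constant parts (namely $\id + F$ in $A$ and the $D$-diagonal) commute with $m$, the commutators $[m,A]$ and $[m,v]$ are controlled entirely by the $\pi_{\leq\frac12}E$-parts via the $\mf{gl}_N$ relation $[e_{ab},e_{cd}] = \delta_{bc}e_{ad} - \delta_{ad}e_{cb}$. Each commutator therefore splits into two Kronecker-delta summands, producing terms of the shape $z^{?}e_{(jk)(i'h')}$ (with free $(i'h')$) or $z^{?}e_{(j'k')(ih)}$ (with free $(j'k')$). Each such term of the form $e_{(jk)(i'h')}\cdot A^{-1}v\,\bar 1$ with $e_{(jk)(i'h')}\in\mf g_{\geq 1}$ can then be re-packaged as $z^{\Delta(e_{(jk)(i'h')})} X_{(i'h')(jk)}$ plus a character correction $(f|e_{(jk)(i'h')})\cdot A^{-1}v\,\bar 1$, and analogously in the symmetric case. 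The constraint $x(ih) - x(i'h') \leq \frac12$ imposed by $\pi_{\leq\frac12}$ on $A$ matches exactly the summation conditions on the LHS of \eqref{key-eq}, producing the shifted-$X$ contributions. The dichotomy between the indicators $\delta_{(ih)\in\mc J}$ and $\delta_{(ih)\in\mc I^c\cap\mc J^c}$ on the RHS simply records whether $(ih)$ is matched against a column index of $v$ (forcing $(ih)=(ip_1)\in\mc J$) or of $A$ (forcing $(ih)\in\mc I^c\cap\mc J^c$).

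The residual $z^{-2}$ terms with coefficients $d_{(jk)} - d_{(ih)}$ and $d_{(jk)} - d_{(ip_1)}$ arise from the scalar $(1 + z^{-1} d_{(\cdot)})$-diagonals of $A$ and $v$ that must be absorbed when rewriting commutator outputs back into $X$-form; their signs and index dependence are dictated by which diagonal (that of $A$ or that of $v$) is crossed during the re-packaging. The main obstacle is not conceptual but combinatorial: one must verify case-by-case that each term produced by the expanded commutator lands in one of the four indicator regimes of \eqref{key-eq} with the correct $z$-power, sign and coefficient, and that the ``quantum'' corrections involving $D$ line up precisely. No single step is hard in isolation, but the full manipulation is lengthy, which is why the paper devotes the subsequent subsections to carrying it out in detail.
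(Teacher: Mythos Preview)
Your approach is essentially the same as the paper's: start from $m\bar1=0$ to write $X_{(ih)(jk)}=-A^{-1}[z^{-\Delta}e_{(jk)(ih)},A]A^{-1}v\bar1+A^{-1}[z^{-\Delta}e_{(jk)(ih)},v]\bar1$, expand the commutators using the $\mf{gl}_N$ relations, and then rewrite each resulting $z^{-\Delta}e_{(\cdot)(\cdot)}$ via the three-way split $z^{-\Delta}e_{(j_1k_1)(i_1h_1)}=A_{(i_1h_1)(j_1k_1)}+z^{-\Delta}(\pi_{\geq1}e-(f|e))-(1+z^{-1}d_{(i_1h_1)})\delta_{i_1j_1}\delta_{h_1k_1}$ (and the analogous identity for $v$), which is what the paper does in \eqref{0330:eq11}--\eqref{0330:eq12}.

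One clarification on your ``re-packaging'' paragraph: the leading non-$X$ terms $E_{(jk)(ih)}A^{-1}v\bar1$ and $E_{(jk)i}\bar1$ on the RHS of \eqref{key-eq} do not come from the character correction $(f|e)$ alone, but from the $A$-entry (resp.\ $v$-entry) piece of this three-way split, which collapses the outer $A^{-1}$ via $\sum A^{-1}A_{(i_1h_1)(jk)}E_{(i_1h_1)(ih)}=E_{(jk)(ih)}$; your sketch is a bit vague on this cancellation. Also, your last sentence is inaccurate: the paper's proof of this lemma is self-contained in Step~4 and is exactly the computation \eqref{0330:eq10}--\eqref{0330:eq16}; the subsequent subsection (Step~5) does not continue this proof but rather uses the lemma's corollary to finish the Main Lemma.
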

\begin{proof}
Note that, under assumption \eqref{0330:eq-ass},
we have, in particular, that $(ih)\in\mc I^c$ and $(jk)\in\mc J^c$.
Recalling definition \eqref{eq:reesM} of the module $\mc RM$
and definitions \eqref{0330:eq8} of the matrices $A$ and $v$,
we have
\begin{equation}\label{0330:eq10}
\begin{array}{l}
\displaystyle{
\vphantom{\Big(}
X_{(ih)(jk)}
=
-A^{-1}
[z^{-\Delta}e_{(jk)(ih)},A]
A^{-1}v\bar1
+
A^{-1}
[z^{-\Delta}e_{(jk)(ih)},v]\bar1
} \\
\displaystyle{
\vphantom{\Big(}
=
-
\sum_{\substack{(i_1h_1)\in\mc I^c,\,(j_1k_1)\in\mc J^c \\ (x{(j_1k_1)}-x{(i_1h_1)}\leq\frac12)}}
A^{-1}
[z^{-\Delta}e_{(jk)(ih)},z^{-\Delta}e_{(j_1k_1)(i_1h_1)}]
E_{(i_1h_1)(j_1k_1)}
A^{-1}v\bar1
} \\
\displaystyle{
\vphantom{\Big(}
+
\sum_{(i_1h_1)\in\mc I^c}
\sum_{j_1=1}^{r_1}
A^{-1}
[z^{-\Delta}e_{(jk)(ih)},z^{-\Delta}e_{(j_1p_1)(i_1h_1)}]
E_{(i_1h_1)j_1}\bar1
} \\
\displaystyle{
\vphantom{\Big(}
=
-\delta_{(ih)\in\mc I^c\cap\mc J^c}
\!\!\!
\sum_{\substack{(i_1h_1)\in\mc I^c \\ (x{(ih)}-x{(i_1h_1)}\leq\frac12)}}
\!\!\!
A^{-1}
(z^{-1-\Delta}e_{(jk)(i_1h_1)})
E_{(i_1h_1)(ih)}
A^{-1}v\bar1
} \\
\displaystyle{
\vphantom{\Big(}
+
\delta_{(jk)\in\mc I^c\cap\mc J^c}
\!\!\!
\sum_{\substack{(j_1k_1)\in\mc J^c \\ (x{(j_1k_1)}-x{(jk)}\leq\frac12)}}
\!\!\!
A^{-1}
(z^{-1-\Delta}e_{(j_1k_1)(ih)})
E_{(jk)(j_1k_1)}
A^{-1}v\bar1
} \\
\displaystyle{
\vphantom{\Big(}
+
\delta_{(ih)\in\mc J}
\sum_{(i_1h_1)\in\mc I^c}
A^{-1}
(z^{-1-\Delta}e_{(jk)(i_1h_1)})
E_{(i_1h_1)i}
\bar1
} \\
\displaystyle{
\vphantom{\Big(}
-
\delta_{(jk)\in\mc I^c\cap\mc J^c}
\sum_{j_1=1}^{r_1}
A^{-1}
(z^{-1-\Delta}e_{(j_1p_1)(ih)})
E_{(jk)j_1}
\bar1
\,.}
\end{array}
\end{equation}
We used the fact that $\Delta(a)+\Delta(b)=1+\Delta([a,b])$, for homogeneous elements $a,b\in\mf g$.
Note that, for $(ih)\in\mc I^c$ and $(jk)\in\mc J^c$, we have
\begin{equation}\label{0330:eq11}
\begin{array}{l}
\displaystyle{
\vphantom{\Big(}
z^{-\Delta}e_{(j_1k_1)(i_1h_1)}
=
A_{(i_1h_1)(j_1k_1)}
+z^{-\Delta}(\pi_{\geq1}e_{(j_1k_1)(i_1h_1)}-(f|e_{(j_1k_1)(i_1h_1)}))
} \\
\displaystyle{
\vphantom{\Big(}
-(1+z^{-1}d_{(i_1h_1)})\delta_{i_1j_1}\delta_{h_1k_1}
\,,}
\end{array}
\end{equation}
where $A_{(i_1h_1)(j_1k_1)}$ denotes the $((i_1h_1)(j_1k_1))$-entry of the matrix $A$,
while, for $1\leq j_1\leq r_1$ and $(i_1h_1)\in\mc I^c$, we have
\begin{equation}\label{0330:eq12}
z^{-\Delta}e_{(j_1p_1)(i_1h_1)}
=
v_{(i_1h_1)j_1}
-(1+z^{-1}d_{(i_1p_1)})\delta_{i_1j_1}\delta_{h_1p_1}
\,,
\end{equation}
where $v_{(i_1h_1)j_1}$ denotes the $((i_1h_1)j_1)$-entry of the matrix $v$.
Using identity \eqref{0330:eq11}, 
we can rewrite the first term in the RHS of \eqref{0330:eq10}
as
\begin{equation}\label{0330:eq13}
\begin{array}{l}
\displaystyle{
\vphantom{\Big(}
-z^{-1}\delta_{(ih)\in\mc I^c\cap\mc J^c}
E_{(jk)(ih)}
A^{-1}v\bar1
} \\
\displaystyle{
\vphantom{\Big(}
-z^{-1}\delta_{(ih)\in\mc I^c\cap\mc J^c}
\!\!\!
\sum_{\substack{(i_1h_1)\in\mc I^c \\ (x{(ih)}-x{(i_1h_1)}\leq\frac12)}}
\!\!\!
A^{-1}
E_{(i_1h_1)(ih)}
X_{(i_1h_1)(jk)}
} \\
\displaystyle{
\vphantom{\Big(}
+
z^{-1}(1+z^{-1}d_{(jk)})
\delta_{(ih)\in\mc I^c\cap\mc J^c}
\delta_{(jk)\in\mc I^c\cap\mc J^c}
A^{-1}
E_{(jk)(ih)}
A^{-1}v\bar1
\,.}
\end{array}
\end{equation}
For the first term of \eqref{0330:eq13} we used the fact that $A_{(i_1h_1)(jk)}=0$
unless $x{(jk)}-x{(i_1h_1)}\leq1$,
which, combined to the assumption \eqref{0330:eq-ass},
implies $x{(ih)}-x{(i_1h_1)}\leq\frac12$.
Hence, 
$\sum_{\substack{(i_1h_1)\in\mc I^c \\ (x{(ih)}-x{(i_1h_1)}\leq\frac12)}}
A_{(i_1h_1)(jk)}E_{(i_1h_1)(ih)}=AE_{(jk)(ih)}$.
For the second term of \eqref{0330:eq13} we used
the definition \eqref{key-X} of $X_{(ih)(jk)}$.
With similar computations, we can use \eqref{0330:eq11}
to rewrite the second term in the RHS of \eqref{0330:eq10}
as
\begin{equation}\label{0330:eq14}
\begin{array}{l}
\displaystyle{
\vphantom{\Big(}
+z^{-1}\delta_{(jk)\in\mc I^c\cap\mc J^c}
A^{-1}
E_{(jk)(ih)}v\bar1
} \\
\displaystyle{
\vphantom{\Big(}
+z^{-1}\delta_{(jk)\in\mc I^c\cap\mc J^c}
\!\!\!
\sum_{\substack{(j_1k_1)\in\mc J^c \\ (x{(j_1k_1)}-x{(jk)}\leq\frac12)}}
\!\!\!
A^{-1}
E_{(jk)(j_1k_1)}
X_{(ih)(j_1k_1)}
} \\
\displaystyle{
\vphantom{\Big(}
-z^{-1}(1+z^{-1}d_{(ih)})
\delta_{(ih)\in\mc I^c\cap\mc J^c}
\delta_{(jk)\in\mc I^c\cap\mc J^c}
A^{-1}
E_{(jk)(ih)}
A^{-1}v\bar1
\,,}
\end{array}
\end{equation}
and the third term in the RHS of \eqref{0330:eq10}
as
\begin{equation}\label{0330:eq15}
+
z^{-1}\delta_{(ih)\in\mc J}
E_{(jk)i}\bar1
-
z^{-1}(1+z^{-1}d_{(jk)})
\delta_{(ih)\in\mc J}
\delta_{(jk)\in\mc I^c\cap\mc J^c}
A^{-1}
E_{(jk)i}
\bar1
\,,
\end{equation}
while, using \eqref{0330:eq12}, we can rewrite the fourth term in the RHS of \eqref{0330:eq10}
as
\begin{equation}\label{0330:eq16}
-z^{-1}
\delta_{(jk)\in\mc I^c\cap\mc J^c}
A^{-1}
E_{(jk)(ih)}v\bar1
+
z^{-1}(1+z^{-1}d_{(ip_1)})
\delta_{(ih)\in\mc J}
\delta_{(jk)\in\mc I^c\cap\mc J^c}
A^{-1}
E_{(jk)i}
\bar1
\,.
\end{equation}
Combining equation \eqref{0330:eq10}
with \eqref{0330:eq13}, \eqref{0330:eq14}, \eqref{0330:eq15} and \eqref{0330:eq16},
we get equation \eqref{key-eq}.
\end{proof}
\begin{lemma}\label{lem:key2}
The unique solution of equation \eqref{key-eq} is:
\begin{equation}\label{key-sol}
X_{(ih)(jk)}
=
-z^{-1}\delta_{(ih)\in\mc I^c\cap\mc J^c}E_{(jk)(ih)}A^{-1}v\bar1
+z^{-1}\delta_{(ih)\in\mc J}E_{(jk)i}\bar1
\,,
\end{equation}
if $x{(jk)}-x{(ih)}\geq1$, and $X_{(ih)(jk)}=0$ otherwise.
\end{lemma}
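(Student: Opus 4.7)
The plan has two parts: uniqueness and direct verification. For uniqueness, I rewrite \eqref{key-eq} as $(\id + z^{-1}\mc L)X = R$, where $\mc L$ is a linear operator on the module of matrices $(X_{(ih)(jk)})$ with entries in $\Mat_{\mc J^c\times\mc J}\mc RM$, and $R$ denotes the right-hand side. Because the $z^{-1}$-adic filtration on $\mc RM$ is Hausdorff and complete and multiplication by $z^{-1}$ strictly lowers the filtration degree, the operator $\id + z^{-1}\mc L$ is invertible via the Neumann series $\sum_{n\geq 0}(-z^{-1}\mc L)^n$, converging termwise. This yields uniqueness of the solution to \eqref{key-eq}. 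The ``otherwise'' case in \eqref{key-sol} is automatic: if $x{(jk)}-x{(ih)}<1$, then $X_{(ih)(jk)}=0$ already by the definition \eqref{key-X}.

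For verification, substitute \eqref{key-sol} into the left-hand side of \eqref{key-eq}. The term $X_{(ih)(jk)}$ itself reproduces exactly the two $z^{-1}$-order summands on the right, by the very definition of \eqref{key-sol}. It remains to show that the two $z^{-1}$-prefixed sums on the left reproduce the two $z^{-2}$-order summands on the right. The first sum $\sum_{(i_1h_1)} A^{-1}E_{(i_1h_1)(ih)} X_{(i_1h_1)(jk)}$ vanishes identically: plugging \eqref{key-sol} in for $X_{(i_1h_1)(jk)}$ gives a multiple of $E_{(jk)(i_1h_1)}$ (or $E_{(jk)i_1}$), and the elementary matrix product rule $E_{ab}E_{cd}=\delta_{bc}E_{ad}$ produces the factor $\delta_{(ih),(jk)}$, which vanishes because our hypothesis $x{(jk)}-x{(ih)}\geq 1$ forces $(ih)\neq(jk)$.

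The second sum, on the contrary, does not vanish: the relevant product $E_{(jk)(j_1k_1)}E_{(j_1k_1)(\cdot)}$ now has matching middle index $(j_1k_1)$, so each summand contributes the same elementary matrix $E_{(jk)(\cdot)}$ independently of $(j_1k_1)$. Splitting according to the two cases of \eqref{key-sol} applied to $X_{(ih)(j_1k_1)}$, the sum collapses to a cardinality-count multiple of $A^{-1}E_{(jk)(ih)}A^{-1}v\bar 1$ or $A^{-1}E_{(jk)i}\bar 1$. The crucial point is that the nontrivial support of $X_{(ih)(j_1k_1)}$ imposes an additional constraint $x{(j_1k_1)}-x{(ih)}\geq 1$ (respectively $x{(j_1k_1)}-x{(ip_1)}\geq 1$) on top of the summation constraint $x{(j_1k_1)}-x{(jk)}\leq \tfrac12$, so the relevant count becomes
\begin{equation*}
\#\bigl\{(j_1k_1)\in\mc J^c\,:\,x{(ih)}+1\leq x{(j_1k_1)}\leq x{(jk)}+\tfrac12\bigr\},
\end{equation*}
and analogously with $(ih)$ replaced by $(ip_1)$. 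Using the identity $d_{(mn)}=-\#\{(m'n')\in\mc T:x{(m'n')}-x{(mn)}\geq 1\}$ and observing that no element of $\mc J$ lies in this range (since $x{(\cdot,p_1)}=\tfrac{1-p_1}{2}$ is the minimum of $x$ on $\mc T$), these counts equal $d_{(jk)}-d_{(ih)}$ and $d_{(jk)}-d_{(ip_1)}$ respectively. Substituting back reproduces exactly the two $z^{-2}$-order correction summands on the right-hand side of \eqref{key-eq}, completing the verification.

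The main obstacle, once uniqueness is in place, is recognizing that the support constraint on $X_{(ih)(j_1k_1)}$ restricts the summation in exactly the right way to produce the shift-matrix difference $d_{(jk)}-d_{(ih)}$; without exploiting this support restriction, the naive cardinality count would be independent of $(ih)$, and the match with the right-hand side would fail.
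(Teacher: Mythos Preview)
Your proof is correct and follows essentially the same strategy as the paper's own proof: uniqueness via the invertibility of $\id+z^{-1}M$ by geometric series, then direct substitution of \eqref{key-sol} into \eqref{key-eq}, with the first sum vanishing by the elementary-matrix collision $\delta_{(ih),(jk)}=0$ and the second sum reducing to a cardinality count that recovers $d_{(jk)}-d_{(ih)}$. Your argument is in fact slightly more careful than the paper's in one respect: the summation over $(j_1k_1)$ in \eqref{key-eq} runs over $\mc J^c$, while the definition of $d_{(\cdot)}$ counts over all of $\mc T$, and you explicitly note that no element of $\mc J$ can satisfy $x(j_1k_1)-x(ih)\geq1$ (since $x$ attains its minimum on $\mc J$), so the two counts agree.
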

\begin{proof}
Let us prove that \eqref{key-sol}
solves equation \eqref{key-eq}.
Note that the first term in the LHS of \eqref{key-eq} equals, by \eqref{key-sol},
the sum of the first two terms in the RHS of \eqref{key-eq}.
We next claim that the second term in the LHS of \eqref{key-eq} vanishes.
Indeed, by \eqref{key-sol},
$E_{(i_1h_1)(ih)}X_{(i_1h_1)(jk)}$ 
vanishes unless $(ih)=(jk)$.
But, in this case, the inequalities 
$x{(ih)}-x{(i_1h_1)}\leq\frac12$ and $x{(jk)}-x{(i_1h_1)}\geq1$
are incompatible.
Therefore, equation \eqref{key-eq} reduces to
\begin{equation}\label{0331:eq1}
\begin{array}{l}
\displaystyle{
\vphantom{\Big(}
-z
\!\!\!
\sum_{\substack{(j_1k_1)\in\mc J^c \\ (x{(j_1k_1)}-x{(jk)}\leq\frac12)}}
\!\!\!
E_{(jk)(j_1k_1)}
X_{(ih)(j_1k_1)}
} \\
\displaystyle{
\vphantom{\Big(}
=
\delta_{(ih)\in\mc I^c\cap\mc J^c}
(d_{(jk)}-d_{(ih)})
E_{(jk)(ih)}
A^{-1}v\bar1
-
\delta_{(ih)\in\mc J}
(d_{(jk)}-d_{(ip_1)})
E_{(jk)i}
\bar1\,,}
\end{array}
\end{equation}
for every $(jk)\in\mc I^c\cap\mc J^c$.
Equation \eqref{0331:eq1} is satisfied by \eqref{key-sol} since,
by the definition \eqref{0226:eq6} of the shift matrix $D$, we have
$$
d_{(jk)}-d_{(ih)}
=\Big\{
(j_1k_1)\in\mc T\,\Big|\,x{(j_1k_1)}-x{(ih)}\geq1\,,\,\,x{(j_1k_1)}-x{(jk)}\leq\frac12\Big\}
\,.
$$
The uniqueness of the solution of equation \eqref{key-eq} is clear. 
Indeed, equation \eqref{key-eq} has the matrix form
$(\id+z^{-1}M)X=Y$,
where $X$ is the column vector $(X_{(ih)(jk)})$, indexed by the pairs $((ih),(jk))$,
with entries in the vector space $V=\Mat_{\mc J^c\times\mc J}\mc RM$,
$Y$ is the analogous column vector defined by the RHS of \eqref{key-eq},
and $M$ is some matrix with entries in $\Mat_{\mc J^c\times\mc I^c}\mc RU(\mf g)$,
which is an algebra acting on the vector space $V$.
But then the matrix $\id+z^{-1}M$ can be inverted by geometric series expansion.
\end{proof}
\begin{corollary}\label{lem:key3}
We have
\begin{equation}\label{key-cor}
B A^{-1}v\bar1=w\bar1
\,.
\end{equation}
\end{corollary}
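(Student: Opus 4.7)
The plan is to compute the $((i_0h_0),j_0)$-entry of $BA^{-1}v\bar 1$ directly and match it against $w_{(i_0h_0),j_0}\bar 1$; Lemma~\ref{lem:key2} provides the key simplification. First I would decompose $B$ according to \eqref{0330:eq8} into two pieces: the ``conformal-weight'' part
$$B' = \sum_{(i'h')\in\mc I^c,\,(j'k')\in\mc J^c} z^{-\Delta}\bigl(\pi_{\geq 1}e_{(j'k')(i'h')}-(f|e_{(j'k')(i'h')})\bigr)E_{(i'h')(j'k')}$$
and the shift-matrix diagonal $B'' = -z^{-1}\sum_{(i'h')\in\mc I^c\cap\mc J^c} d_{(i'h')}E_{(i'h')(i'h')}$. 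By the very definition \eqref{key-X} of $X_{(ih)(jk)}$ as a scalar times $A^{-1}v\bar 1$, the $((j'k'),j_0)$-entry of the matrix $X_{(i_0h_0)(j'k')}$ is precisely $B'_{(i_0h_0),(j'k')}\,(A^{-1}v\bar 1)_{(j'k'),j_0}$, so
$$(B'A^{-1}v\bar 1)_{(i_0h_0),j_0} = \sum_{(j'k')\in\mc J^c} X_{(i_0h_0)(j'k')}\bigl[(j'k'),j_0\bigr],$$
and this sum is effectively finite because $X_{(ih)(jk)}$ vanishes unless $x{(jk)}-x{(ih)}\geq 1$.

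Next I would substitute the closed form from Lemma~\ref{lem:key2}. Extracting the $((j'k'),j_0)$-entry from its right-hand side yields a quantity \emph{independent} of $(j'k')$:
$$
X_{(i_0h_0)(j'k')}\bigl[(j'k'),j_0\bigr] = -z^{-1}\delta_{(i_0h_0)\in\mc I^c\cap\mc J^c}(A^{-1}v\bar 1)_{(i_0h_0),j_0} + z^{-1}\delta_{(i_0h_0)\in\mc J}\delta_{i_0,j_0}\bar 1.
$$
The crucial combinatorial input is then to count how many nonzero summands remain: the number of $(j'k')\in\mc J^c$ satisfying $x{(j'k')}-x{(i_0h_0)}\geq 1$ equals exactly $-d_{(i_0h_0)}$. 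Indeed, all elements of $\mc J$ sit at the minimal $x$-value $-(p_1-1)/2$ (cf.\ Section~\ref{sec:4.2}), so no $(j'k')\in\mc J$ can satisfy $x{(j'k')}-x{(i_0h_0)}\geq 1$, and the set of admissible $(j'k')$ in $\mc J^c$ therefore coincides with that in all of $\mc T$, of cardinality $-d_{(i_0h_0)}$ by \eqref{0226:eq6}.

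Multiplying the $(j'k')$-independent expression by $-d_{(i_0h_0)}$ and adding the $B''$ contribution $(B''A^{-1}v\bar 1)_{(i_0h_0),j_0} = -z^{-1}d_{(i_0h_0)}\delta_{(i_0h_0)\in\mc I^c\cap\mc J^c}(A^{-1}v\bar 1)_{(i_0h_0),j_0}$, the two terms involving $(A^{-1}v\bar 1)_{(i_0h_0),j_0}$ cancel exactly, leaving $-z^{-1}d_{(i_0h_0)}\delta_{(i_0h_0)\in\mc J}\delta_{i_0,j_0}\bar 1 = -z^{-1}d_{(j_0p_1)}\delta_{(i_0h_0),(j_0p_1)}\bar 1$, which is $w_{(i_0h_0),j_0}\bar 1$ by the definition of $w$ in \eqref{0330:eq8}. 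The only delicate point, and the one genuinely non-formal step, is the combinatorial count in the second paragraph: recognizing that the restriction $x{(jk)}-x{(ih)}\geq 1$ built into the definition of $X$ truncates the sum to exactly $-d_{(i_0h_0)}$ terms is what arranges the precise cancellation against the shift-matrix diagonal $B''$; everything else is clean bookkeeping.
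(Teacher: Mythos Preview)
Your proof is correct and follows essentially the same approach as the paper's own proof: both decompose $B$ into its ``$\pi_{\geq1}$'' part and its diagonal shift part, substitute the closed form \eqref{key-sol} for $X_{(ih)(jk)}$ from Lemma~\ref{lem:key2}, use the combinatorial identity $-d_{(ih)}=\#\{(jk)\in\mc T\mid x(jk)-x(ih)\geq1\}$ to count the surviving terms, and observe the resulting cancellation against the shift-diagonal contribution. The only cosmetic difference is that the paper writes the computation as a matrix identity (summing $E_{(ih)(jk)}X_{(ih)(jk)}$ over all $(ih),(jk)$) while you extract a fixed $((i_0h_0),j_0)$-entry throughout; the substance is identical.
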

\begin{proof}
By the definitions \eqref{0330:eq8} of $B$ and \eqref{key-X} of $X_{(ih)(jk)}$,
we have
\begin{equation}\label{0331:eq2}
B A^{-1}v\bar1
=
\sum_{(ih)\in\mc I^c,\,(jk)\in\mc J^c}
E_{(ih)(jk)}
X_{(ih)(jk)}
-
z^{-1}\sum_{(ih)\in\mc I^c\cap\mc J^c}
d_{(ih)}
E_{(ih)(ih)}
A^{-1}v\bar1
\,.
\end{equation}
We can use equation \eqref{key-sol}
to rewrite the first term in the RHS of \eqref{0331:eq2} as
\begin{equation}\label{0331:eq3}
\begin{array}{l}
\displaystyle{
\vphantom{\Big(}
-z^{-1}\sum_{(ih)\in\mc I^c\cap\mc J^c}
\#\big\{(jk)\in\mc T\,\big|\,x{(jk)}-x{(ih)}\geq1\big\}
E_{(ih)(ih)}
A^{-1}v\bar1
} \\
\displaystyle{
\vphantom{\Big(}
+z^{-1}\sum_{i=1}^{r_1}
\#\big\{(jk)\in\mc T\,\big|\,x{(jk)}-x{(ih)}\geq1\big\}
E_{(jk)i}\bar1
\,.}
\end{array}
\end{equation}
Recalling the definition \eqref{0226:eq6} of the shift $d_{(ih)}$,
we see that the first term in \eqref{0331:eq3}
is opposite to the second term in the RHS of \eqref{0331:eq2},
while the first term in \eqref{0331:eq3} coincides with $w\bar1$ (cf. \eqref{0330:eq8}).
\end{proof}

\subsection{Step 5: proof of Equation \eqref{0330:eq9}}\label{step5}

First, consider the matrix $B\in\Mat_{\mc I^c\times\mc J^c}$
defined in \eqref{0330:eq8}.
Its entries are all in the kernel of the homomorphism $\epsilon:\,\mc RU(\mf g)\to\mb F$,
since $z^{\Delta}(e_{(jk)(ih)}-(f|e_{(jk)(ih)}))\in\mc RI\subset\ker\epsilon$,
and $z^{-1}d_{ih}\in z^{-1}\mc RU(\mf g)\subset\ker\epsilon$.
Therefore, $\epsilon(\id+BA^{-1})=\id$,
and Corollary \ref{0410:prop3} guarantees that 
the matrix $\id+BA^{-1}$ is invertible in $\Mat_{\mc I^c\times\mc J^c}\mc R_\infty U(\mf g)$.
By Corollary \ref{lem:key3}, we have
$$
(\id+B A^{-1})v\bar1
=(v+w)\bar1
\,.
$$
It follows that
$$
\begin{array}{l}
\displaystyle{
\vphantom{\Big(}
A^{-1}v\bar1
=
A^{-1}(\id+B A^{-1})^{-1}
(\id+B A^{-1})v\bar1
=
(A+B)^{-1}(v+w)\bar1
\,.}
\end{array}
$$
\begin{flushright}
\qedsymbol
\end{flushright}

\section{Proof of the Main Theorems}

\subsection{Proof of Theorem \ref{thm:main1}}
\label{sec:4.6}

The proof is similar to the proof of the analogous result 
for classical affine $W$-algebras, presented in \cite[Appendix]{DSKV16b}.
We need to prove that, for every $i_0,j_0\in\{1,\dots,r_1\}$, we have
$L_{i_0j_0}(w)\in W((w^{-1}))$.
By the definition \eqref{eq:La} of the matrix $L(w)$
and by the Main Lemma \ref{lem:main},
this is the same as to prove that
\begin{equation}\label{0413:eq5}
\widetilde{L}^\prime_{i_0j_0}(w)\bar1
\,\in\mc RW(\mf g,f)\,,
\,\text{ where }\,
\widetilde{L}^\prime(w)
:=
|\id_N+w^{-\Delta}E|_{I_1J_1}\,\in\Mat_{r_1\times r_1}\mc R_\infty U(\mf g)
\,.
\end{equation}
(Here, the Rees algebra is considered with respect to the variable $w$ in place of $z$.)
By Proposition \eqref{0413:prop1},
condition \eqref{0413:eq5} is equivalent to
\begin{equation}\label{0303:eq23}
[a,\widetilde{L}^\prime_{i_0j_0}(w)]\bar1=0
\,\,\text{ for all }\,\, a\in\mf g_{\geq\frac12}\,,
\end{equation}
which we are going to prove.
By the definition \eqref{eq:gen-quasidet}
of the quasideterminant,
$\widetilde L^\prime(w)=(J_1(\id_N+w^{-\Delta}E)^{-1}I_1)^{-1}$.
Hence, we can use formula \eqref{20160402:eq1}
to rewrite \eqref{0303:eq23} as
\begin{equation}\label{0303:eq24}
-\sum_{i,j=1}^{r_1}
\widetilde{L}_{i_0i}(w)
\big[a,(J_1(\id_N+w^{-\Delta}E)^{-1}I_1)_{ij}\big]
\widetilde{L}_{jj_0}(w)
\bar1
\,.
\end{equation}
We need to prove that \eqref{0303:eq24} vanishes for every $a\in\mf g_{\geq\frac12}$.
In order to do so, we shall prove that
\begin{equation}\label{0303:eq25}
\big[a,(J_1(\id_N+w^{-\Delta}E)^{-1}I_1)_{ij}\big]
=0
\,\,\text{ for all }\,\, a\in\mf g_{\geq\frac12}
\,.
\end{equation}
First, by \eqref{0413:eq6}, we have the identity
$$
J_1(\id_N+w^{-\Delta}E)^{-1}I_1
=w^{p_1}J_1(w\id_N+E)^{-1}I_1
\,.
$$
Hence, we use the definition \eqref{eq:factor1} of the matrices $I_1$ and $J_1$
to rewrite equation \eqref{0303:eq25} as
\begin{equation}\label{0303:eq26}
\big[a,((w\id_N+E)^{-1})_{(ip_1)(j1)}\big]
=0
\,,
\end{equation}
where now the identity takes place in $U(\mf g)((w^{-1}))$
(not anymore in the Rees algebra $\mc RU(\mf g)$.)
We now let $a$ be the basis element $e_{(\tilde j\tilde k)(\tilde i\tilde h)}$,
where $(\tilde i\tilde h),(\tilde j\tilde k)\in\mc T$
satisfy (cf. \eqref{eq:adx})
\begin{equation}\label{0303:eq26b}
\frac12(p_{\tilde j}-p_{\tilde i})-(\tilde k-\tilde h)\geq\frac12
\,.
\end{equation}
Recalling the definition \eqref{eq:E} of the matrix $E$, we have
$$
a:=e_{(\tilde j\tilde k)(\tilde i\tilde h)}
=
(z\id_N+E)_{(\tilde i\tilde h)(\tilde j\tilde k)}
\,.
$$
Hence, \eqref{0303:eq26} becomes
\begin{equation}\label{0303:eq27}
\big[
(z\id_N+E)_{(\tilde i\tilde h)(\tilde j\tilde k)}
,((w\id_N+E)^{-1})_{(ip_1)(j1)}
\big]
\,.
\end{equation}
Recalling, by Example \ref{ex:A}, that the matrix $z\id_N+E$ 
is an operator of Yangian type and it is invertible in 
$\Mat_{N\times N}U(\mf g)((z^{-1}))$,
we can use Lemma \ref{0303:lem5} to rewrite \eqref{0303:eq27} as
\begin{equation}\label{0303:eq28}
\begin{array}{l}
\vphantom{\Big(}
\displaystyle{
(z-w)^{-1}\sum_{\tau\in\mc T}\Big(
-\delta_{(ip_1)(\tilde j\tilde k)}
(z\id_N+E)_{(\tilde i\tilde h)\tau}((w\id_N+E)^{-1})_{\tau(j1)}
} \\
\vphantom{\Big(}
\displaystyle{
+\delta_{(\tilde i\tilde h)(j1)}
((w\id_N+E)^{-1})_{(ip_1)\tau}(z\id_N+E)_{\tau(\tilde j\tilde k)}
\Big)
\,.}
\end{array}
\end{equation}
To prove that \eqref{0303:eq28} vanishes,
we just observe that 
$\delta_{(ip_1)(\tilde j\tilde k)}=\delta_{(\tilde i\tilde h)(j1)}=0$.
Indeed, both equalities $(\tilde j\tilde k)=(ip_1)$
and $(\tilde i\tilde h)=(j1)$ contradict \eqref{0303:eq26b}.
\begin{flushright}
\qedsymbol
\end{flushright}

\subsection{Proof of Theorem \ref{thm:main2}}
\label{sec:4.7}

By the Main Lemma \ref{lem:main} and Theorem \ref{thm:main1}, we have
$$
L_{ij}(z)=z^{p_1}\widetilde{L}^\prime_{ij}(z)\bar1
\,\in W(\mf g,f)((z^{-1}))
$$
and, by Proposition \ref{0413:prop1},
$$
\widetilde{L}^\prime(z)
:=
|\id_N+z^{-\Delta}E|_{I_1J_1}
\,\in\Mat_{r_1\times r_1}\mc R_\infty\widetilde{W}
\,.
$$
Recall that the associative product of the $W$-algebra $W(\mf g,f)$
is induced by the product of $\widetilde{W}\subset U(\mf g)$.
Hence,
\begin{equation}\label{0303:eq14}
[L_{ij}(z),L_{hk}(w)]
=
z^{p_1}w^{p_1}\big[
(|\id_N+z^{-\Delta}E|_{I_1J_1})_{ij},
(|\id_N+w^{-\Delta}E|_{I_1J_1})_{hk}
\big]\bar1
\,.
\end{equation}
Recall, by Example \ref{ex:A},
that $z\id_N+E$ is an operator of Yangian type for the algebra $U(\mf g)$.
Hence, using the first equation in \eqref{0413:eq6},
we have 
$$
\id_N+z^{-\Delta}E
=
z^{-1}X^{-1}(z\id_N+E)X
\,,
$$
which, by Proposition \ref{prop:properties-adler}(a),
satisfies the Yangian identity \eqref{eq:adler} as well.
It then follows by Theorem \ref{thm:quasidet-adler}
that the quasideterminant $|\id_N+z^{-\Delta}E|_{I_1J_1}$
satisfies the Yangian identity \eqref{eq:adler} as well.
(Strictly speaking, Theorem \ref{thm:quasidet-adler}
cannot be applied, as stated, in this situation,
since the quasideterminant $|\id_N+z^{-\Delta}E|_{I_1J_1}$
does not have coefficients in $U(\mf g)((z^{-\frac12}))$,
but in $\mc R_\infty U(\mf g)$.
On the other hand, it is easy to check that the same proof of Theorem \ref{thm:quasidet-adler}
can be repeated in this situation to show that the matrix $|\id_N+z^{-\Delta}E|_{I_1J_1}$
satisfies the Yangian identity \eqref{eq:adler}.)
We thus get, from equation \eqref{0303:eq14},
\begin{equation}\label{0303:eq15}
\begin{array}{l}
\vphantom{\Big(}
\displaystyle{
(z-w)[L_{ij}(z),L_{hk}(w)]
= 
z^{p_1}w^{p_1}
\Big(
(|\id_N+w^{-\Delta}E|_{I_1J_1})_{hj}(|\id_N+z^{-\Delta}E|_{I_1J_1})_{ik}
} \\
\vphantom{\Big(}
\displaystyle{
- (|\id_N+z^{-\Delta}E|_{I_1J_1})_{hj}(|\id_N+w^{-\Delta}E|_{I_1J_1})_{ik}
\Big)\bar1
} \\
\vphantom{\Big(}
\displaystyle{
=
L_{hj}(w)\circ L_{ik}(z)
-L_{hj}(z)\circ L_{ik}(w)
\,.}
\end{array}
\end{equation}
\begin{flushright}
\qedsymbol
\end{flushright}

\section{Conjectural form of \texorpdfstring{$L(z)$}{L(z)}}
\label{sec:conjecture}

\subsection{PBW basis for $W(\mf g,f)$}
We recall the construction of a PBW basis for $W(\mf g,f)$ provided in \cite{Pre02}.
Fix a subspace $U\subset\mf g$ complementary to $[f,\mf g]$ and compatible with the grading \eqref{eq:grading}.
For example, we could take $U=\mf g^e$, the Slodowy slice. 
Since $\ad f:\,\mf g_{j}\to\mf g_{j-1}$ is surjective for $j\leq\frac12$, 
we have $\mf g_{\leq-\frac12}\subset[f,\mf g]$.
In particular, we have the direct sum decomposition
\begin{equation}\label{eq:U1}
\mf g_{\geq-\frac12}=[f,\mf g_{\geq\frac12}]\oplus U\,.
\end{equation}
Note that, by the non-degeneracy of $(\cdot\,|\,\cdot)$, the orthocomplement to $[f,\mf g]$
is $\mf g^f$, the centralizer of $f$ in $\mf g$.
Hence, the direct sum decomposition dual to \eqref{eq:U1}
has the form
\begin{equation}\label{eq:Uperp}
\mf g_{\leq\frac12}=U^\perp\oplus\mf g^f\,.
\end{equation}
As a consequence of \eqref{eq:Uperp}
we have the decomposition in a direct sum of subspaces
$$
S(\mf g_{\leq\frac12})=S(\mf g^f)\oplus S(\mf g_{\leq\frac12})U^\perp
\,,
$$
where $S(V)$ denotes the symmetric algebra over the vector space $V$.
Let $\pi_{\mf g^f}:S(\mf g_{\leq\frac12})\twoheadrightarrow S(\mf g^f)$
be the canonical quotient map, with kernel $S(\mf g_{\leq\frac12}) U^\perp$.

The next theorem gives a description of a PBW basis for $W(\mf g,f)$.
\begin{theorem}\cite[Theorem 4.6]{Pre02}
There exists a (non-unique) linear map
$$
w:\mf g^f\to W(\mf g,f)
$$
such that $w(x)\in F_\Delta W(\mf g,f)$ and $\pi_{\mf g^f}(\gr_\Delta w(x))=x$, for every $x\in\mf g_{1-\Delta}^f$.
Moreover, if $\{x_i\}_{i=1}^{t}$, $t=\dim\mf g^f$, is an ordered basis of $\mf g^f$ consisting of $\ad x$-eigenvectors
$x_i\in\mf g_{1-\Delta_i}^f$, then the
monomials
$$
\{w(x_{i_1})\dots w(x_{i_k})\mid k\in\mb Z_+\,, 1\leq i_1\leq\dots\leq i_k\leq t\,,
\Delta_1+\dots+\Delta_k\leq\Delta
\}
$$
form a basis of $F_\Delta W(\mf g,f)$, $\Delta\geq0$.
\end{theorem}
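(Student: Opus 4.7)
The plan is to prove the theorem by working with the Kazhdan filtration on $W(\mf g,f)$ (introduced in Section \ref{sec:5.1}) and identifying the associated graded algebra $\gr W(\mf g,f) := \bigoplus_{\Delta\in\frac12\mb Z_{\geq0}}F_\Delta W(\mf g,f)/F_{\Delta-\frac12}W(\mf g,f)$ with the polynomial algebra $S(\mf g^f)$. Once this identification is established, the theorem follows by a standard filtered-graded lifting argument.

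\textbf{Step 1: Identify the associated graded.} Recall that $\gr U(\mf g)\cong S(\mf g)$ as graded Poisson algebras with respect to the Kazhdan filtration. I would first check that the symbol map gives a canonical isomorphism of graded vector spaces
\begin{equation*}
\gr M\,\stackrel{\sim}{\longrightarrow}\,S(\mf g_{\leq\frac12})\,,
\end{equation*}
using Lemma \ref{0316:lem1} together with the PBW decomposition $U(\mf g)\cong U(\mf g_{\leq\frac12})\otimes U(\mf g_{\geq1})$ and the fact that every element of $\mf g_{\geq1}$ is congruent to a scalar modulo $I$. Next, I would show that under this identification, the subalgebra $\gr W(\mf g,f)\subset\gr M$ corresponds to $S(\mf g^f)\subset S(\mf g_{\leq\frac12})$. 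The key input is the decomposition \eqref{eq:Uperp}: since $[f,\mf g_{\geq\frac12}]$ is orthogonal to $\mf g^f$ and complementary to $U^\perp$, a symbol-level computation of the $\ad\mf g_{\geq\frac12}$-invariance condition (which becomes a Poisson-bracket condition with elements of $\mf g_{\geq\frac12}$ at the graded level) forces the symbol to lie in $S(\mf g^f)$, and the projection $\pi_{\mf g^f}$ realizes the resulting isomorphism of graded algebras. This step is essentially the Slodowy-slice description of $\gr W(\mf g,f)$ due to Gan--Ginzburg.

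\textbf{Step 2: Construct the linear map $w$.} Given a homogeneous element $x\in\mf g^f_{1-\Delta}\subset S(\mf g^f)_\Delta$, the surjectivity of the identification in Step 1 produces some $w(x)\in F_\Delta W(\mf g,f)$ whose symbol in $\gr_\Delta W(\mf g,f)$ equals $x$; equivalently, $\pi_{\mf g^f}(\gr_\Delta w(x))=x$ as required. Choose such a lift for each element of a homogeneous basis of $\mf g^f$ and extend linearly; this yields the (non-unique) linear map $w:\mf g^f\to W(\mf g,f)$.

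\textbf{Step 3: The PBW basis.} Fix an ordered homogeneous basis $\{x_i\}_{i=1}^t$ of $\mf g^f$ with $x_i\in\mf g^f_{1-\Delta_i}$. The ordered monomials $w(x_{i_1})\cdots w(x_{i_k})$ with $\Delta_1+\dots+\Delta_k\leq\Delta$ have symbols $x_{i_1}\cdots x_{i_k}\in S(\mf g^f)_{\leq\Delta}$, and these symbols form a basis of $\bigoplus_{\Delta'\leq\Delta}S(\mf g^f)_{\Delta'}\cong\gr(F_\Delta W(\mf g,f))$ by the classical PBW theorem for the symmetric algebra. A standard argument then lifts this to a basis of $F_\Delta W(\mf g,f)$: linear independence follows because any nontrivial linear relation would produce, after selecting terms of top Kazhdan degree, a nontrivial relation among the symbols; spanning follows by induction on $\Delta$, using that any element of $F_\Delta W(\mf g,f)$ can be matched modulo $F_{\Delta-\frac12}W(\mf g,f)$ by a combination of the monomials, and iterating.

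\textbf{Main obstacle.} The hard step is Step 1, and specifically showing that the inclusion $S(\mf g^f)\hookrightarrow\gr W(\mf g,f)$ (which is clear from the existence of lifts) is actually surjective. Equivalently, one must show that an $\ad\mf g_{\geq\frac12}$-invariant coset in $M$ whose symbol lies in $S(\mf g_{\leq\frac12})U^\perp$ actually has lower Kazhdan filtration. One approach is to use the contracting $\mb C^\times$-action generated by $2x$ combined with the Euler field from the grading, which makes the filtration exhaustive and separated, and then to deploy Skryabin's equivalence (or the Gan--Ginzburg deformation of the Slodowy slice to a free $\mf g_{\geq\frac12}$-orbit) to identify the invariants. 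An alternative, more elementary path is to construct lifts by induction on $\Delta$ by repeatedly correcting $\ad\mf g_{\geq\frac12}$-noninvariance using that $\ad f:\mf g_{\geq\frac12}\to\mf g_{\geq-\frac12}$ is, modulo $U$, surjective, which is precisely the content of \eqref{eq:U1}.
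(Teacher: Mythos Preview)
The paper does not prove this theorem: it is stated with a citation to \cite[Theorem 4.6]{Pre02} and no proof is given. There is therefore nothing in the paper to compare your proposal against.

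That said, your outline is a faithful sketch of the standard argument (Premet, Gan--Ginzburg): identify $\gr W(\mf g,f)$ with $S(\mf g^f)$ via the Slodowy slice picture, then lift. You correctly flag that the substantive content lies in the surjectivity of $S(\mf g^f)\to\gr W(\mf g,f)$, and the two routes you mention (the contracting $\mb C^\times$-action as in \cite{GG02}, or an inductive correction procedure using \eqref{eq:U1}) are exactly the known approaches. Your Step 1 description of the invariance condition at the graded level is slightly imprecise---the relevant computation is that the Poisson action of $\mf g_{\geq\frac12}$ on $S(\mf g_{\leq\frac12})$ (after the shift by $\chi$) has as its coinvariants the restriction to the Slodowy slice, and the invariants coincide with coinvariants by the freeness of the action---but this is the right idea and is what \cite{GG02} establishes.
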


\subsection{Conjecture on the map $w$}

Let $\mf g=\mf{gl}_N$, let $f\in\mf g$ be a non-zero nilpotent element,
and let $p=p_1\geq\dots\geq p_r>0$ be the corresponding partition of $N$.
Recall the notation of Section \ref{sec:4.1}.
It is proved in \cite{DSKV16b} that $\mf g=[f,\mf g]\oplus U$, where
\begin{equation}\label{eq:U}
U=\Span\Big\{e_{(j1),(i,p_i-k)}
\,\,,\,\,\text{ where }\,\,
1\leq i,j\leq r
\,\,\text{ and }\,\,
0\leq k\leq\min\{p_i,p_j\}-1
\Big\}
\,,
\end{equation}
and the basis of $\mf g^f$ dual to the basis of $U$ in \eqref{eq:U} is:
\begin{equation}\label{eq:basisgf}
f_{ij;k}
:=
\sum_{h=0}^k
e_{(i,p_i+h-k),(j,h+1)}
\,\,,\,\,\,\,
1\leq i,j\leq r\,,\,\, 0\leq k\leq\min\{p_i,p_j\}-1
\,.
\end{equation}
In the sequel we give a conjectural way to define the corresponding free generators of $W(\mf g,f)$,
\begin{equation}\label{eq:basisW}
w_{ij;k}
:=
w(f_{ij;k})
\,\,,\,\,\,\,
1\leq i,j\leq r\,,\,\, 0\leq k\leq\min\{p_i,p_j\}-1
\,,
\end{equation}
satisfying the conditions of Premet's Theorem, following the analogous result in the classical affine setting, see \cite{DSKV16b}. For $i,j\in\{1,\dots,r\}$, let
\begin{equation}\label{eq:Wij}
W_{ij}(z)
=
\sum_{k=0}^{\min\{p_i,p_j\}-1} w_{ij;k}(-z)^k
\in W(\mf g,f)[z]
\,.
\end{equation}
Denote by $W(z)$ the $r\times r$ matrix with entries \eqref{eq:Wij}
(transposed):
$$
W(z)
=
\sum_{i,j=1}^r 
W_{ij}(z) E_{ji}
=
\left(\begin{array}{cccc}
W_{11}(z) & W_{21}(z) & \dots & W_{r1}(z) \\
W_{12}(z) &W_{22}(z) & \dots &W_{r2}(z) \\
\vdots & \vdots & \ddots & \vdots \\
W_{1r}(z) & W_{2r}(z) & \dots & W_{rr}(z)
\end{array}\right)
\,,
$$
and by $(-z)^p$ the diagonal $r\times r$ matrix
with diagonal entries $(-z)^{p_i}$, $i=1,\dots,r$:
$$
(-z)^p
=
\sum_{i=1}^r (-z)^{p_i}E_{ii}
=
\left(\begin{array}{ccc}
(-z)^{p_1} & & 0 \\
& \ddots& \\
0 & & (-z)^{p_r}
\end{array}\right)
\,.
$$
\begin{conjecture}\label{conj1}
There exists a unique set of generators $w_{ij;k}$, $1\leq i,j\leq r$, and $0\leq k\leq\min\{p_i,p_j\}-1$, 
of $W(\mf g,f)$,
for which the following identity holds
\begin{equation}\label{eq:L-explicit}
L(z)
:=
|z\id_N+F+\pi_{\leq\frac12}+D|_{I_1J_1}\bar1
=
|-(-z)^p+W(z)|_{I_{rr_1}J_{r_1r}}
\,,
\end{equation}
where $I_{rr_1}\in\Mat_{r\times r_1}\mb F$ and $J_{r_1r}\in\Mat_{r_1\times r}\mb F$
are as in \eqref{0304:eq4} corresponding to the subsets $\mc I=\mc J=\{1,\dots,r_1\}$.
In this case, 
the linear map $w:\mf g^f\to W(\mf g,f)$ defined by \eqref{eq:basisW} 
satisfies all the conditions of Premet's Theorem.
\end{conjecture}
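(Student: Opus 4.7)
The plan is two-fold: first solve for the $w_{ij;k}$ from the identity \eqref{eq:L-explicit}, and then separately verify that the resulting $w_{ij;k}$ satisfy the Kazhdan filtration and symbol conditions of Premet's Theorem.

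For the first step, I would apply Proposition \ref{0304:prop} to the RHS of \eqref{eq:L-explicit}, writing it as the Schur complement of the $r\times r$ matrix $M(z):=-(-z)^p+W(z)$ with respect to the subset $\mc I=\{1,\dots,r_1\}\subset\{1,\dots,r\}$:
\[
|M(z)|_{I_{rr_1}J_{r_1r}}=M(z)_{\mc I\mc I}-M(z)_{\mc I\mc I^c}\,M(z)_{\mc I^c\mc I^c}^{-1}\,M(z)_{\mc I^c\mc I}\,.
\]
The bottom-right block $M(z)_{\mc I^c\mc I^c}$ is invertible as a matrix Laurent series in $z^{-1}$, since its leading diagonal is $\diag(-(-z)^{p_a})_{a>r_1}$ with $p_a<p_1$, so the expansion is well-defined. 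Treating the $w_{ij;k}$ as unknowns, matching the coefficients of $z^{p_1},z^{p_1-1},\dots,z^0$ on both sides produces a triangular linear system (ordered by conformal weight) that has a unique solution; the count $\sum_{i,j}\min\{p_i,p_j\}=\dim\mf g^f$ matches the number of available degrees of freedom, and each $w_{ij;k}$ lies in $W(\mf g,f)$ by Theorem \ref{thm:main1}. The key remaining point is to show that the $w_{ij;k}$ so determined also force all the negative-power coefficients of the two sides to agree; this is where Theorem \ref{thm:main2} (the Yangian identity) should enter essentially, propagating the equality from the top powers down to all lower ones.

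For Premet's conditions, I would pass to the associated graded algebra $\gr W(\mf g,f)$ with respect to the Kazhdan filtration. The shift matrix $D$ in \eqref{eq:La} contributes only sub-leading terms in conformal weight, so in the graded limit $L(z)$ reduces to the classical affine $L$-operator studied in \cite{DSKV16b} (with $\partial$ specialized to $z$). The classical computation there identifies the leading symbols of the generators with the basis $\{f_{ij;k}\}$ of $\mf g^f$ modulo $U^\perp$, yielding exactly $\pi_{\mf g^f}(\gr_{\Delta_{ij;k}}w_{ij;k})=f_{ij;k}$ as required, after which Premet's Theorem automatically gives the PBW basis statement.

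The main obstacle is the propagation step: showing that the Yangian identity together with the finitely many defining equations forces all of the remaining (infinitely many) coefficient equations of \eqref{eq:L-explicit}. The Schur complement with its varying ``depths'' $p_a$ for $a>r_1$ interacts with the Yangian relations in a subtle way, and the non-trivial cancellations underlying this consistency are precisely what makes the statement a conjecture rather than a theorem; a secondary difficulty is ensuring, in the passage to $\gr W(\mf g,f)$, that no unexpected higher-conformal-weight contributions from $M(z)_{\mc I^c\mc I^c}^{-1}$ spoil the identification with the classical symbols.
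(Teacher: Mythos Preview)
The statement you are trying to prove is labeled a \emph{Conjecture} in the paper, and the paper does not provide a proof of it. The authors only verify it in the special cases treated in Section~\ref{sec:Examples} (principal, rectangular, and minimal nilpotent), by direct computation of $L(z)$ and explicit identification of the generators $w_{ij;k}$. There is no general argument in the paper against which your proposal can be compared.

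Your outline is a reasonable strategy, and you are candid about where it breaks down: the ``propagation step'' in which the Yangian identity is supposed to force agreement of all negative-power coefficients from finitely many top ones is exactly the content of the conjecture, and you do not supply a mechanism for it. The Yangian identity \eqref{eq:adler} relates the entries of $L(z)$ among themselves, but it does not obviously relate $L(z)$ to the Schur-complement expression $|-(-z)^p+W(z)|_{I_{rr_1}J_{r_1r}}$ built from \emph{independent} unknowns $w_{ij;k}$; there is no a priori reason the latter should satisfy \eqref{eq:adler}, so the identity cannot by itself propagate equality. Likewise, your appeal to \cite{DSKV16b} for the Premet conditions presupposes that the classical and quantum generators have matching leading symbols, which is again part of what must be shown. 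In short, your proposal correctly isolates the two genuine obstacles (consistency of the overdetermined system beyond the top $p_1$ coefficients, and control of the symbols under the Kazhdan filtration), but does not resolve either; this is consistent with the paper's decision to state the result as a conjecture.
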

In Section \ref{sec:Examples}
we check Conjecture \ref{conj1}  in some explicit examples.

We can write the matrix $W(z)$ in block form as
$W(z)
=
\left(\begin{array}{ll}
W_1(z) & W_2(z) \\
W_3(z) & W_4(z)
\end{array}\right)$,
where
\begin{align*}
\begin{split}
& W_1(z)
=
\big(
W_{ji}(z)
\big)_{1\leq i,j\leq r_1}
\,\,,\,\,\,\,
W_2(z)
=
\big(
W_{ji}(z)
\big)_{1\leq i\leq r_1<j\leq r}
\,,\\
& W_3(z)
=
\big(
W_{ji}(z)
\big)_{1\leq j\leq r_1<i\leq r}
\,\,,\,\,\,\,
W_4(z)
=
\big(
W_{ji}(z)
\big)_{r_1<i,j\leq r}
\,.
\end{split}
\end{align*}
Then, by Proposition \ref{0304:prop},
we can rewrite equation \eqref{eq:L-explicit}
as follows:
$$
L(z)
=
-\id_{r_1}(-z)^{p_1}+W_1(z)
-W_2(z)(-(-z)^{q}+W_4(z))^{-1}W_3(z)
\,,
$$
where $q=(p_{r_1+1}\geq\dots\geq p_r>0)$ is the partition of $N-r_1p_1$,
obtained by removing from the partition $p$ all the maximal parts.

\section{Examples}\label{sec:Examples}

\subsection{Example 1: principal nilpotent $f$ and the Yangian $Y_{1,N}$}\label{sec:Examples.1}

The principal nilpotent element 
$f^{\text{pr}}=\sum_{i=1}^{N-1}e_{i+1,i}\in\mf{gl}_N$
corresponds to the partition $p=N$.
For this partition $r=r_1=1$ and the shift matrix $D$ given by equation \eqref{0226:eq5} 
is $D=\sum_{i=1}^N(1-i)E_{ii}$.
In this case, by Proposition \ref{thm:rho},
we can identify $W(\mf g,f^{\text{pr}})$ with a subalgebra of $U(\mf g_{\leq0})$ 
(since the grading \eqref{eq:grading} is by integers).
Using Proposition \ref{0304:prop} and equation \eqref{eq:La} we have that $L(z)\in U(\mf g_{\leq0})((z^{-1}))$ is defined by
\begin{equation}\label{eq:principal-quasidet}
\begin{array}{l}
\displaystyle{
\vphantom{\Big(}
L(z)
=|z\id_N+\rho(E)+D|_{1N}
=
e_{N1}-
\left(\begin{array}{llll}
z+e_{11}&e_{21}&\dots&e_{N-1,1}
\end{array}\right) 
}\\
\displaystyle{
\times\left(\begin{array}{cccll}
1&z\!+\!e_{22}\!-\!1&e_{32}&\dots&e_{N-1,2} \\
0&1&\ddots&\ddots&\vdots \\
\vdots&\ddots&\ddots&\ddots&e_{N-1,N-2} \\
\vdots&&\ddots&\ddots&z\!+\!e_{N\!-\!1,N\!-\!1}\!+\!2\!-\!N \\
0&\dots&\dots&0&1
\end{array}\right)^{-1}
\left(\begin{array}{l}
e_{N2} \\ \vdots \\ e_{N,N-1} \\ z\!+\!e_{N\!N}\!+\!1\!-\!N
\end{array}\right)
\,.}
\end{array}
\end{equation}
We can expand the inverse matrix in the RHS of \eqref{eq:principal-quasidet}
in geometric series, to get the following more explicit formula for $L(z)$ (we use the shorthand
$\tilde e_{ij}=e_{ij}+\delta_{ij}(1-i)$):
\begin{equation}\label{eq:principal-gener-explicit}
\begin{split}
&L(z) 
=
e_{N1}+
\sum_{s=1}^{N-1}(-1)^s 
\sum_{2\leq h_1<\dots<h_s\leq N}
(\delta_{h_1-1,1}z+\tilde e_{h_1-1,1})
(\delta_{h_2-1,h_1}z+\tilde e_{h_2-1,h_1})
\dots \\
& \qquad\qquad\dots
(\delta_{h_s-1,h_{s-1}}z+\tilde e_{h_s-1,h_{s-1}})
(\delta_{N,h_s}z+\tilde e_{N,h_s})
\,.
\end{split}
\end{equation}
Note that the RHS of \eqref{eq:principal-gener-explicit} is a polynomial in $z$, hence (recall Proposition \ref{thm:L1}(b))
it uniquely defines elements $w_k\in W(\mf{gl}_N,f^\text{pr})\subset U(\mf g_{\leq0})$ such that
$$
L(z)
=
-(-z)^N
+\sum_{k=0}^{N-1}w_k(-z)^k
\,.
$$
For example, $w_{N-1}=\tr E=e_{11}+\dots+e_{NN}-\frac{N(N-1)}{2}$.
Hence in this case, 
Conjecture \ref{conj1} holds.
This solution agrees with the explicit generators obtained in \cite{BK06}.
By Theorem \ref{thm:main2} we have that $[L(z),L(w)]=0$, which implies that
all the $w_k$'s  commute.
Thus, $W(\mf{gl}_N,f^\text{pr})=\mb F[w_1,\dots,w_k]$. 
This shows, in particular, that $W(\mf{gl}_N,f^{\text{pr}})$ is isomorphic to the Yangian $Y_{1,N}$ (the Yangian
of $\mf{gl}_1$ of level $N$) introduced by Cherednik
\cite{Che87} as already noticed in \cite{RS99,BK06}.

\subsection{Example 2: rectangular nilpotent and the Yangian $Y_{r_1,p_1}$
}\label{sec:Examples.2}

Consider the partition $p=(p_1,\dots,p_1)$ of $N$,
consisting of $r_1$ equal parts of size $p_1$.
It corresponds to the so called rectangular nilpotent element $f^\text{rect}$.
In this case we have the shift matrix $D=\sum_{(ih)\in\mc T}r_1(1-h)E_{(ih)(ih)}$. 

We identify 
$$
\Mat_{N\times N}\mb F
\simeq
\Mat_{p_1\times p_1}\mb F\otimes\Mat_{r_1\times r_1}\mb F
\,,
$$
by mapping $E_{(ih),(jk)}\mapsto E_{hk}\otimes E_{ij}$.
Under this identification, we have
\begin{equation*}
\begin{split}
& \id_N\mapsto\id_{p_1}\otimes\id_{r_1}
\,\,,\,\,\,\,
D\mapsto \sum_{h=1}^{p_1}r_1(1-h)E_{hh}\otimes \id_{r_1}
\,,\\
&
\rho(E)\mapsto \sum_{k=1}^{p_1-1}E_{k+1,k}\otimes \id_{r_1}
+
\sum_{i,j=1}^{r_1}\sum_{1\leq h\leq k\leq p_1}e_{(jk),(ih)}E_{hk}\otimes E_{ij}
\,.
\end{split}
\end{equation*}
As for the principal nilpotent case,
we can identify $W(\mf g,f^{\text{rect}})$ with a subalgebra of $U(\mf g_{\leq0})$.
By the formula for quasideterminants \eqref{0304:eq6} given in Proposition \ref{0304:prop} and equation \eqref{eq:La}, we have
that $L(z)\in\Mat_{r_1\times r_1} U(\mf g_{\leq0})((z^{-1}))$ is defined by
(we use the shorthand $\tilde e_{(ih),(jk)}=e_{(ih),(jk)}+\delta_{(ih),(jk)}r_1(1-h)$)
\begin{equation}\label{eq:rectangular-quasidet}
\begin{split}
& L(z)
=
\Big|
(\id_{p_1}\otimes\id_{r_1})z
+\sum_{k=1}^{p_1-1}E_{k+1,k}\otimes\id_{r_1}
+\sum_{i,j=1}^{r_1}\sum_{1\leq h\leq k\leq p_1}\tilde e_{(jk),(ih)}E_{hk}\otimes E_{ij}
\Big|_{I_1J_1} \\
& =\!
\sum_{i,j=1}^{r_1}
e_{(jp_1),(i1)}E_{ij}
-
\Bigg(\!
\sum_{i,j=1}^{r_1}
\left(\begin{array}{llll}
\!\!\delta_{ij}z\!+\!e_{(j1),(i1)}&e_{(j2),(i1)}&\dots&e_{(j,p_1-1),(i1)}\!\!
\end{array}\right)\otimes E_{ij}
\!\Bigg)
\\
&
\times\Bigg(\!
\id_{p_1-1}\!\otimes\!\id_{r_1}+
\sum_{i,j=1}^{r_1}
\left(\begin{array}{rccl}
0&\delta_{ij}z\!\!+\!\!\tilde e_{(j2),(i2)}&\dots&e_{(j,p_1-1),(i2)} \\
\vdots&\ddots&\ddots&\vdots \\
\vdots&&\ddots&\delta_{ij}z\!\!+\!\!\tilde e_{(j,p_1-1),(i,p_1-1)} \\
0&\dots&\dots&0
\end{array}\right)\!\otimes\! E_{ij}
\!\Bigg)^{-1} \\
&
\times\Bigg(
\sum_{i,j=1}^{r_1}
\left(\begin{array}{l}
e_{(jp_1),(i2)} \\ \vdots \\ e_{(jp_1),(i,p_1-1)} \\ \delta_{ij}z+\tilde e_{(jp_1),(ip_1)}
\end{array}\right)
\otimes E_{ij}
\Bigg)
\,.
\end{split}
\end{equation}
As we did in Section \ref{sec:Examples.1},
we expand the inverse matrix in the RHS of \eqref{eq:rectangular-quasidet}
in geometric series, to get a more explicit formula for $L(z)$.
For every $1\leq i,j\leq r_1$, we have
\begin{equation}\label{eq:rectangular-gener-explicit}
\begin{split}
&L_{ij}(z)
=
e_{(jp_1),(i1)}+
\sum_{s=1}^{p_1-1}(-1)^s
\sum_{i_1,\dots,i_s=1}^{r_1}
\sum_{2\leq h_1<\dots<h_s\leq p_1} \\
& (\delta_{i_1,i}\delta_{h_1-1,1}z+\tilde e_{(i_1,h_1-1),(i1)})
(\delta_{i_2,i_1}\delta_{h_2-1,h_1}z+\tilde e_{(i_2,h_2-1),(i_1h_1)})
\dots \\
& \dots
(\delta_{i_s,i_{s-1}}\delta_{h_s-1,h_{s-1}}z+\tilde e_{(i_s,h_s-1),(i_{s-1},h_{s-1})})
(\delta_{i_s,j}\delta_{p_1,h_s}z+\tilde e_{(jp_1),(i_sh_s)})
\,.
\end{split}
\end{equation}
The RHS of \eqref{eq:rectangular-gener-explicit} is a polynomial in $z$, hence (recall Proposition \ref{thm:L1}(b)) 
it uniquely defines elements $w_{ji;k}\in W(\mf{gl}_N,f^{\text{rect}})\subset U(\mf g_{\leq0})$, 
$1\leq i,j\leq r_1$, $0\leq k\leq p_1-1$, such that
\begin{equation}\label{eq:L-rectangular}
L(z)
=
-\id_{r_1}(-z)^{p_1}
+\sum_{k=0}^{p_1-1}W_k(-z)^k
\,\,,\,\,\,\,
W_k=\big(w_{ji;k}\big)_{i,j=1}^{r_1}\in\Mat_{r_1\times r_1}\!\!\! W(\mf{gl}_N,f^{\text{rect}})
\,,
\end{equation}
in accordance to Conjecture \ref{conj1}.
Using equation \eqref{eq:L-rectangular} and Theorem \ref{thm:main2} we,
we can use the Yangian identity \eqref{eq:adler} to find explicit commutation relations among the generators
($1\leq a,b,c,d\leq r_1$ and $0\leq h,k\leq p_1$):
\begin{equation}\label{eq:bracket_rectangular}
[w_{ab;h},w_{cd;k}]=\sum_{n=0}^{\min\{p_1-1-h,k\}}\left(w_{ad;h+n+1}w_{bc;k-n}-w_{ad;k-n}w_{bc;h+n+1}\right)
\,,
\end{equation}
where in the RHS we let $w_{ji;p_1}=-\delta_{ij}$. Hence, $W(\mf{gl}_N,f^\text{rect})$ is isomorphic to the Yangian $Y_{r_1,p_1}$ (the Yangian of $\mf{gl}_{r_1}$ of level $p_1$) introduced by Cherednik
\cite{Che87} as already noticed in \cite{RS99,BK06}.

\subsection{Example 3: minimal nilpotent $f$}\label{sec:Examples.3}

The minimal nilpotent element $f$ in $\mf g=\mf{gl}_N$
is associated to the partition $p=(2,1,\dots,1)$.
In this case the entries of the shift matrix $D$ are $d_{(ih)}=-\delta_{(ih),{(12)}}$, for every $(i,h)\in\mc T$, and 
for our choice of $f$ equation \eqref{eq:La} becomes
\begin{equation}\label{eq:L-minimal-def}
L(z)
=
\Big|
\left(\begin{array}{ccc}
z+e_{(11),(11)} & e_{(12),(11)} & e_{+(11)} \\
1 & z+e_{(12),(12)}-1 & e_{+(12)} \\
e_{(11)+} & e_{(12)+} & z\id_{N-2}+E_{++}
\end{array}\right)
\Big|_{12}\bar1
\,,
\end{equation}
where ($k=1,2$)
\begin{equation*}
\begin{split}
& e_{+(1k)}
=
\left(\begin{array}{lll} e_{(21),(1k)} & \dots & e_{(r1),(1k)} \end{array}\right) 
\,,\\
& E_{++}
=
\left(\begin{array}{ccc} 
e_{(21),(21)} & \dots & e_{(r1),(21)} \\
\vdots & \ddots & \vdots \\
e_{(21),(r1)} & \dots & e_{(r1),(r1)} 
\end{array}\right)
\,\,,\,\,\,\,
e_{(1k)+}
=
\left(\begin{array}{l} e_{(1k),(21)} \\ \vdots \\ e_{(1k),(r1)} \end{array}\right)
\,.
\end{split}
\end{equation*}
We can compute the quasideterminant \eqref{eq:L-minimal-def} by the usual formula \eqref{0304:eq5} in Proposition
\ref{0304:prop}.
As a result we get, after a straightforward computation,
\begin{equation}\label{eq:L-minimal-gener}
\begin{array}{l}
\displaystyle{
\vphantom{\Big(}
L(z)
=
e_{(12),(11)}\bar1
-
(z+e_{(11),(11)})(z+e_{(12),(12)}-1)\bar1
} \\
\displaystyle{
\vphantom{\Big(}
-\left(
(z+e_{(11),(11)})e_{+(12)}-e_{+(11)}
\right)
\left(
z\id_{N-2}+E_{++}-e_{(11)+}e_{+(12)}
\right)^{-1}
} \\
\displaystyle{
\vphantom{\Big(}
\qquad\qquad\qquad\times
\left(
e_{(11)+}(z+e_{(12),(12)}-1)-e_{(12)+}
\right)\bar1
\,.}
\end{array}
\end{equation}
It is then not difficult to rewrite equation \eqref{eq:L-minimal-gener} as
\begin{equation}\label{eq:L1-minimal}
L(z)
=
-z^2-w_{11;1}z+w_{11;0}
- w_{+1}(z\id_{N-2}+W_{++})^{-1}w_{1+}
\,,
\end{equation}
where
\begin{align*}
& w_{+1}
=
\left(\begin{array}{lll} w_{21;0} & \dots & w_{r1;0} \end{array}\right) 
\,,\\
& W_{++}
=
\left(\begin{array}{ccc} 
w_{22;0} & \dots & w_{r2;0} \\
\vdots & \ddots & \vdots \\
w_{2r;0} & \dots & w_{rr;0} 
\end{array}\right)
\,\,,\,\,\,\,
w_{1+}
=
\left(\begin{array}{l} w_{12;0} \\ \vdots \\ w_{1r;0} \end{array}\right)
\,,
\end{align*}
and $w_{11;1}$, $w_{11;0}$ are defined as follows
(all elements lie in $M=U(\mf g)/I\simeq U(\mf g_{\leq0})\otimes F(\mf g_{\frac12})$):
\begin{equation}\label{eq:minimal-gener}
\begin{split}
w_{11;1}
&
=e_{(11),(11)}+e_{(12),(12)}+e_{+(12)}e_{(11)+}-1 \,,\\
w_{11;0}
&
=e_{(12),(11)}-e_{(11),(11)}(e_{(12),(12)}-1)+e_{+(12)}w_{1+}
\\
&+w_{+1}e_{(11)+}-e_{+(12)}W_{++}e_{(11)+}
\,,\\
w_{+1}
&
=e_{+(11)}-e_{(11),(11)}e_{+(12)}+e_{+(12)}W_{++}\,,\\
w_{1+}
&
=e_{(12)+}-e_{(11)+}(e_{(12),(12)}-1)+W_{++}e_{(11)+}
\,,\\
W_{++}
&
=E_{++}-e_{(11)+}e_{+(12)} \,.
\end{split}
\end{equation}
By Theorem \ref{thm:main1}, $w_{11;1}$ and $w_{11;0}$ lie in $W(\mf g,f)$. 
It is a straightforward computation
to check that also the entries of $w_{+1}$, $w_{1+}$ and $W_{++}$ 
lie in $W(\mf g,f)$.
Moreover,  equation \eqref{eq:minimal-gener} defines a map 
$w:\mf g^f\to W$ satisfying the conditions of Premet's Theorem,
confirming Conjecture \ref{conj1}.

We can also compute the commutator between generators by using 
their  explicit formulas \eqref{eq:minimal-gener}.
The non-zero commutators are:
\begin{equation}\label{eq:minimal-brackets}
\begin{split}
& [w_{11;1},w_{+1}]
=
-w_{+1}
\,,\,\,
[w_{11;1},w_{1+}]
=
w_{1+}
\,,\\
&[w_{11;0},w_{+1}]
=
-w_{+1}W_{++}+w_{+1}w_{11;1}
\,,\\
&[w_{11;0},w_{1+}]
=
W_{++}w_{1+}-w_{11;1}w_{1+}
\,,\\
&[w_{1+},w_{+1}]
=
-W_{++}^2+w_{11;1}W_{++}+w_{11;0}\id_{N-2}
\,,\\
&
[(W_{++})_{ij}, (w_{+1})_k]
=\delta_{ik}(w_{+1})_j
\,,\\
&
[(W_{++})_{ij}, (w_{1+})_k]
=-\delta_{jk}(w_{1+})_i
\,,
\\
&[(W_{++})_{ij},(W_{++})_{hk}]
=
\delta_{ik}(W_{++})_{hj}-\delta_{jh}(W_{++})_{ik}
\,.
\end{split}
\end{equation}
Note that the commutation relations in \eqref{eq:minimal-brackets} are the same relations 
defining the Yangian $Y_{N-1,2}(\sigma)$ (the shifted Yangian of $\mf{gl}_{N-1}$ of level $2$) 
as proved in \cite{BK06}. Equations \eqref{eq:minimal-brackets} also agree with \cite{Pre07}.
It is a long but straightforward computation to check that using the commutation relations 
in \eqref{eq:minimal-brackets}
we have $[L(z),L(w)]=0$, thus showing that $L(z)$ satisfies the Yangian identity as stated 
in Theorem \ref{thm:main2}.


\begin{thebibliography}{00} 


\bibitem[BG07]{BG07}
Brundan J., Goodwin S.M.,
\emph{Good grading polytopes},
Proc. Lond. Math. Soc. (3) {\bf 94} (2007), no. 1, 155-180.

\bibitem[BK06]{BK06}
Brundan J., Kleshchev A.,
\emph{Shifted Yangians and finite W-algebras},
Adv. Math. {\bf 200} (2006), no. 1, 136-195.

\bibitem[Cap1902]{Cap1902}
Capelli A.,
\emph{Lezioni sulla teoria delle forme algebriche},
Napoli 1902.

\bibitem[Che87]{Che87}
Cherednik I.,
\emph{A new interpretation of the Gelfand-Tzetlin bases},
Duke Math. J. {\bf 54} (1987), 563-577.


\bibitem[DSK06]{DSK06}
De Sole A., Kac V. G.,
\emph{Finite vs affine W-algebras},
Jpn. J. Math. {\bf 1} (2006), no. 1, 137-261.

\bibitem[DSKV13]{DSKV13}
De Sole A., Kac V. G., Valeri D.,
\emph{Classical $\mc W$-algebras and generalized Drinfeld-Sokolov bi-Hamiltonian systems 
within the theory of Poisson vertex algebras},
Comm. Math. Phys. {\bf 323} (2013), no. 2, 663-711.

\bibitem[DSKV15]{DSKV15}
De Sole A., Kac V. G., Valeri D.,
\emph{Adler-Gelfand-Dickey approach to classical W-algebras 
within the theory of Poisson vertex algebras},
Int. Math. Res. Not. 21 (2015), 11186-11235.

\bibitem[DSKVjems]{DSKVjems}
De Sole A., Kac V. G., Valeri D.,
\emph{Structure of classical (finite and affine) $\mc W$-algebras},
to appear in JEMS,
preprint arXiv:1404.0715

\bibitem[DSKV16a]{DSKV16a}
De Sole A., Kac V. G., Valeri D.,
\emph{A new scheme of integrability for (bi)-Hamiltonian PDE},
Comm. Math. Phys. (2016), to appear,
preprint arXiv:1508.02549.

\bibitem[DSKV16b]{DSKV16b}
De Sole A., Kac V. G., Valeri D.,
\emph{Classical affine $\mc W$-algebras for $\mf{gl}_N$ 
and associated integrable Hamiltonian hierarchies},
Comm. Math. Phys. (2016), to appear,
preprint arXiv:1509.06878.

\bibitem[Dr86]{Dr86}
Drinfeld V.G.,
\emph{Quantum groups},
Proceedings of the International Congress of Mathematicians, 
Vol. 1, 2 (Berkeley, Calif., 1986), 798--820, Amer. Math. Soc., Providence, RI, 1987.

\bibitem[EK05]{EK05}
Elashvili A.G., Kac V.G.,
\emph{Classification of good gradings of simple Lie algebras},
Lie groups and invariant theory, 85-104, 
Amer. Math. Soc. Transl. Ser. 2, 213, 
Amer. Math. Soc., Providence, RI, 2005.

\bibitem[Fed16]{Fed16}
Fedele L., PhD thesis.

\bibitem[GG02]{GG02}
Gan W.L., Ginzburg V.,
\emph{Quantization of Slodowy slices},
Int. Math. Res. Not. (2002), no.5, 243-255.

\bibitem[GGRW05]{GGRW05}
Gelfand I.M., Gelfand S.I., Retakh V. and Wilson R.L.,
\emph{Quasideterminants}
Adv. Math. {\bf 193} (2005), n.1, 56-141.


\bibitem[Kos78]{Kos78}
Kostant B., \emph{On Whittaker vectors and representation theory}, 
Invent. Math. {\bf 48} (1978), n.2, 101-184.


\bibitem[Mol07]{Mol07}
Molev A.,
\emph{Yangians and classical Lie algebras},
Mathematical Surveys and Monographs, 143. 
American Mathematical Society, Providence, RI, 2007.

\bibitem[Pre02]{Pre02}
Premet A.,
\emph{Special transverse slices and their enveloping algebras},
Adv. Math. {\bf170} (2002), 1-55.

\bibitem[Pre07]{Pre07}
Premet A.,
\emph{Enveloping algebras of Slodowy slices and the Joseph ideal},
J. Eur. Math. Soc. (JEMS) {\bf 9} (2007), no. 3, 487-543.

\bibitem[RS99]{RS99}
Ragoucy E., Sorba P.,
\emph{Yangian realisations from finite W-algebras},
Comm. Math. Phys. {\bf 203} (1999), 551-572.

\end{thebibliography}
\end{document}